\DeclareMathOperator{\coker}{coker}
\DeclareMathOperator{\diverg}{div}
\DeclareMathOperator{\End}{End}
\DeclareMathOperator{\ev}{ev}
\DeclareMathOperator{\GL}{GL}
\DeclareMathOperator{\Hom}{Hom}
\DeclareMathOperator{\ImaginaryPart}{Im}
\DeclareMathOperator{\ind}{ind}
\DeclareMathOperator{\rank}{rank}
\DeclareMathOperator{\RealPart}{Re}
\DeclareMathOperator{\sign}{sign}
\DeclareMathOperator{\sing}{Sing}
\newcommand{\conSum}{{\mathbin{\#}}}
\newcommand{\restricted}[2]{{\left.{#1}\right|_{#2}}}
\newcommand{\lcan}{{\lambda_{\mathrm{can}}}}
\newcommand{\p}{\partial}
\newcommand{\lie}[1]{{\mathcal{L}_{#1}}}
\newcommand{\abs}[1]{{\left\lvert #1\right\rvert}}
\newcommand{\0}{{\mathbf{0}}}
\renewcommand{\epsilon}{\varepsilon}
\newcommand{\BLOB}{\textsf{bLob}\xspace}
\newcommand{\LOB}{\textsf{Lob}\xspace}
\renewcommand{\AA}{{\mathbb{A}}}
\newcommand{\bB}{{\mathcal{B}}}
\newcommand{\CC}{{\mathbb{C}}}
\newcommand{\CP}{{\CC P}}
\newcommand{\DD}{{\mathbb D}}
\newcommand{\eE}{{\mathcal{E}}}
\newcommand{\HH}{{\mathbb{H}}}
\newcommand{\NN}{{\mathbb{N}}}
\newcommand{\RR}{{\mathbb{R}}}
\newcommand{\QQ}{{\mathbb{Q}}}
\newcommand{\SSS}{{\mathbb{S}}}
\newcommand{\TT}{{\mathbb{T}}}
\newcommand{\ZZ}{{\mathbb{Z}}}
\newcommand{\fF}{{\mathcal F}}
\newcommand{\bfP}{{\mathbf{p}}}
\newcommand{\bfQ}{{\mathbf{q}}}
\newcommand{\bfx}{{\mathbf{x}}}
\newcommand{\bfy}{{\mathbf{y}}}
\newcommand{\defin}[1]{\textbf{#1}}
\newcommand{\mM}{{\mathcal M}}
\newcounter{CobStep}
\theoremstyle{plain}
\newcounter{maintheorem}
\newtheorem{main_theorem}[maintheorem]{Theorem}
\newtheorem*{non_fillability_theorem}{Theorem~\ref{thm:
non-fillability}}
\newtheorem*{homology_LOB_theorem}{Theorem~\ref{thm: homology LOB}}
\newtheorem{theorem}{Theorem}[section]
\newtheorem{lemma}[theorem]{Lemma}
\newtheorem{corollary}[theorem]{Corollary}
\newtheorem{proposition}[theorem]{Proposition}
\theoremstyle{remark}
\newtheorem{remark}[theorem]{Remark}
\newtheorem{example}[theorem]{Example}
\theoremstyle{definition}
\newtheorem*{definition}{Definition}
\numberwithin{equation}{section}
\title{Higher dimensional contact topology via holomorphic disks}
\begin{document}

\maketitle

\section{Introduction}

In '85 Gromov published his article on pseudo-holomorphic curves
\cite{Gromov_HolCurves} that made symplectic topology as we know it
today only possible.
Using these techniques, Gromov presented in his initial paper many
spectacular results, and soon many other people started using these
methods to settle questions that before had been out of reach
\cite{Eliashberg_filling, McDuffRuled, McDuff_contactType,
  HoferWeinstein, Eliashberg3Torus, AbreuSympGroup} and many others;
for more recent results in this vein we refer to
\cite{WendlGirouxTorsion, OanceaViterbo}.
While the references above rely on studying the topology of the moduli
space itself, Gromov's $J$-holomorphic methods have also been used to
develop powerful algebraic theories like Floer Homology, Gromov-Witten
Theory, Symplectic Field Theory, Fukaya Theory etc. that basically
rely on counting rigid holomorphic curves (that means holomorphic
curves that are isolated).
Note though that we will completely ignore such algebraic techniques
in these notes.
Gromov's approach for studying a symplectic manifold~$(W,\omega)$
consists in choosing an \emph{auxiliary} almost complex structure~$J$
on $W$ that is compatible with $\omega$ in a certain way.
This auxiliary structure allows us to study so called $J$-holomorphic
curves, that means, equivalence classes of maps
\begin{equation*}
  u\colon (\Sigma, j) \to (W,J)
\end{equation*}
from a Riemann surface $(\Sigma, j)$ to $W$ whose differential at
every point $x\in \Sigma$ is a $(j,J)$-complex map
\begin{equation*}
  Du_x \colon T_x\Sigma \to T_{u(x)} W \;.
\end{equation*}
Conceivable generalizations of such a theory based on studying
$J$-holomorphic surfaces or even higher dimensional $J$-complex
manifolds only work for \emph{integrable} complex structures;
otherwise generically such submanifolds do not exist.
A different approach has been developed by Donaldson
\cite{DonaldsonSympSubmanifolds, DonaldsonLefschetzPencil}, and
consists in studying approximately holomorphic sections in a line
bundle over $W$.
This theory yields many important results, but has a very different
flavor than the one discussed here by Gromov.
The $J$-holomorphic curves are relatively rare and usually come in
finite dimensional families.
Technical problems aside, one tries to understand the symplectic
manifold~$(W, \omega)$ by studying how these curves move through $W$.
Let us illustrate this strategy with the well-known example of
$\CP^n$.
We know that there is exactly one complex line through any two points
of $\CP^n$.
We fix a point $z_0\in \CP^n$, and study the space of all holomorphic
lines going through $z_0$.
It follows directly that $\CP^n \setminus \{z_0\}$ is foliated by
these holomorphic lines, and every line with $z_0$ removed is a disk.
Using that the lines are parametrized by the corresponding complex
line in $T_{z_0}\CP^n$ that is tangent to them, we see that the space
of holomorphic lines is diffeomorphic to $\CP^{n-1}$, and that
$\CP^n\setminus \{z_0\}$ will be a disk bundle over $\CP^{n-1}$.
In this example, we have used an ambient manifold that we understand
rather well, $\CP^n$, to compute the topology of the space of complex
lines.
So far, it might seem unclear how one could obtain information about
the topology of the space of complex lines in an ambient space that we
do not understand equally well, to then extract in a second step
missing information about the ambient manifold.
The common strategy is to assume that the almost complex manifold we
want to study already contains a family of holomorphic curves.
We then observe how this family evolves, hoping that it will
eventually ``fill up'' the entire symplectic manifold (or produce
other interesting effects).
To briefly sketch the type of arguments used in general, consider now
a symplectic manifold~$W$ with a compatible almost complex structure,
and suppose that it contains an open subset $U$ diffeomorphic to a
neighborhood of $\CP^1 \times \{0\}$ in $\CP^1 \times \CC$ (see
\cite{McDuffRuled}).
In this neighborhood we find a family of holomorphic spheres $\CP^1
\times \{z\}$ parametrized by the points~$z$.
We can explicitly write down the holomorphic spheres that lie
completely inside $U$, but Gromov compactness tells us that as the
holomorphic curves approach the boundary of $U$, they cannot just
cease to exist but instead there is a well understood way in which
they can degenerate, which is called \emph{bubbling}.
Bubbling means that a family of holomorphic curves decomposes in the
limit into several smaller ones.
Sometimes bubbling can be controlled or even excluded by imposing
technical conditions, and in this case, the limit curve will just be a
regular holomorphic curve.
In the example we were sketching above, this means that if no bubbling
can happen, there will be regular holomorphic spheres (partially)
outside $U$ that are obtained by pushing the given ones towards the
boundary of $U$.
This limit curve is also part of the $2$-parameter space of spheres,
and thus it will be surrounded by other holomorphic spheres of the
same family.
As long as we do not have any bubbling, we can thus extend the family
by pushing the spheres to the limit and then obtain a new regular
sphere, which again is surrounded by other holomorphic spheres.
This way, we can eventually show that the whole symplectic manifold is
filled up by a $2$-dimensional family of holomorphic spheres.
Furthermore the holomorphic spheres do not intersect each other (in
dimension~$4$), and this way we obtain a $2$-sphere fibration of the
symplectic manifold.
In conclusion, we obtain in this example just from the existence of
the chart~$U$, and the conditions that exclude bubbling that the
symplectic manifold needs to be a $2$-sphere bundle over a compact
surface (the space of spheres).
Note that many arguments in the example above (in particular the idea
that the moduli spaces foliate the ambient manifold) do not hold in
general, that means for generic almost complex structures in manifolds
of dimension more than $4$.
Either one needs to weaken the desired statements or find suitable
work arounds.
The principle that is universal is the use of a well understood local
model in which we can detect a family of holomorphic curves.
If bubbling can be excluded, this family extends into the unknown
parts of the symplectic manifold, and can be used to understand
certain topological properties of this manifold.

\vspace{0.4cm}

These notes are based on a course that took place at the Université de
Nantes in June 2011 during the \emph{Trimester on Contact and
  Symplectic Topology}.
We will explain how holomorphic curves can be used to study symplectic
fillings of a given contact manifold.
Our main goal consists in showing that certain contact manifolds do
not admit any symplectic filling at all.
Since closed symplectic manifolds are usually studied using closed
holomorphic curves, it is natural to study symplectic fillings by
using holomorphic curves with boundary.
We will explain how the existence of so called \emph{Legendrian open
  books} (\LOB{}s) and \emph{bordered Legendrian open books}
(\BLOB{}s) controls the behavior of holomorphic disks, and what
properties we can deduce from families of such disks.
The notions are direct generalizations of the overtwisted disk
\cite{Gromov_HolCurves, Eliashberg_filling} and standardly embedded
$2$-spheres in a contact $3$-manifold \cite{BedfordGaveau,
  Gromov_HolCurves, HoferWeinstein}.
For completeness, we would like to mention that symplectic fillings
have also been studied successfully via punctured holomorphic curves
whose behavior is linked to Reeb orbit dynamics, and via closed
holomorphic curves by first capping off the symplectic filling to
create a closed symplectic manifold.

\subsection{Outline of the notes}

In the first part of these notes we will talk about Legendrian
foliations, and in particular about \LOB{}s and \BLOB{}s.
We will not consider any holomorphic curves here, but the main aim
will be instead to illustrate examples where these objects can be
localized.
In the second part, we study the properties of holomorphic disks
imposed by Legendrian foliations and convex boundaries.
In the last chapter, we use this information to understand moduli
spaces of holomorphic disks obtained from a \LOB{} or a \BLOB{}, and
we prove some basic results about symplectic fillings.
The content of these notes are based on an unfinished manuscript of
\cite{NiederkrugerHabilitation}.

\subsection{Notation}

We assume throughout a certain working knowledge on contact topology
(for a reference see for example
\cite[Chapter~3.4]{McDuffSalamonIntro} and \cite{Geiges_book}) and on
holomorphic curves \cite{AudinLafontaine, McDuffSalamonJHolo}.
The contact structures we consider in this text are always
\emph{cooriented}.
Remember that by choice of a coorientation, $(M, \xi)$ always obtains
a natural orientation and its contact structure~$\xi$ carries a
natural \emph{conformal} symplectic structure.
For both, it suffices to choose a \defin{positive} contact
form~$\alpha$, that means, a $1$-form with $\xi = \ker \alpha$ that
evaluates positively on vectors that are positively transverse to the
contact structure.
The orientation on $M$ is then given by the volume form
\begin{equation*}
  \alpha \wedge d\alpha^n \;,
\end{equation*}
where $\dim M = 2n + 1$, while the conformal symplectic structure is
represented by $\restricted{d\alpha}{\xi}$.
One can easily check that these notions are well-defined by choosing
any other positive contact form~$\alpha'$ so that there exists a
smooth function~$f\colon M \to \RR$ such that $\alpha' = e^f\,
\alpha$.

\subsection*{Further conventions}

Note that $\DD^2$ denotes in this text the \emph{closed} unit disk.
I owe it to Patrick Massot to have been converted to the following
\emph{jargon}.

\begin{definition}
  The term \defin{regular equation} \index{equation!regular} can refer
  in this text to any of the following objects:
  \begin{enumerate}
  \item When $\Sigma$ is a cooriented hypersurface in a manifold~$M$,
    then we call a smooth function $h\colon M \to \RR$ a
    \defin{regular equation for $\Sigma$}, if $0$ is a regular value
    of $h$ and $h^{-1}(0) = \Sigma$.
  \item When $\mathcal{D}\le TM$ is a singular codimension~$1$
    distribution, then we say that a $1$-form $\beta$ is a
    \defin{regular equation for $\mathcal{D}$}, if $\mathcal{D} = \ker
    \beta$ and if $d\beta \ne 0$ at singular points of $\mathcal{D}$.
  \end{enumerate}
\end{definition}
According to this definition, an equation of a contact structure is
just a contact form.

\begin{removed}
Let $E$ be a vector bundle over a manifold~$M$, and assume that $E$ is
equipped with a metric~$g$.
We use the notation
\begin{equation*}
  E_{<R} = \bigl\{v\in E\bigm|\, g(v,v) < R^2 \bigr\}\;.
\end{equation*}
for the \defin{open disk bundle of size $R$ in $E$}, and use similar
subscripts to refer to other subsets of the bundle~$E$.
\end{removed}

\begin{removed}

\subsection*{Commonly used results}

We will often use in this text the following standard result (see for
example \cite[Lemma~2.1]{Milnor_MorseTheory}).

\begin{proposition}\label{prop: Milnor Taylor formula}
  Let $U$ be an open convex neighborhood of $0$ in $\RR^n$.
  Every smooth function $f\colon U \to \RR$ may be written in the form
  \begin{equation*}
    f(x_1,\dotsc,x_n) = f(0,\dotsc,0)
    + \sum_{j=1}^n x_j\, g_j(x_1,\dotsc,x_n) \;,
  \end{equation*}
  where the $g_1,\dotsc, g_n\colon U \to \RR$ are smooth functions
  that satisfy
  \begin{equation*}
    g_j(0,\dotsc,0) = \frac{\partial f}{\partial x_j} (0,\dotsc,0) \;.
  \end{equation*}
\end{proposition}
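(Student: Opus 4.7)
The plan is to exploit the convexity of $U$ to reduce the statement to the one-dimensional fundamental theorem of calculus applied along the straight-line segment from $0$ to $x$. For each $x = (x_1,\dotsc,x_n) \in U$, convexity guarantees that the path $t \mapsto tx$ stays inside $U$ for $t \in [0,1]$, so $f(tx)$ is well-defined and smooth in $t$.

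First I would write
\begin{equation*}
  f(x) - f(0) = \int_0^1 \frac{d}{dt} f(tx)\, dt
  = \int_0^1 \sum_{j=1}^n x_j\, \frac{\partial f}{\partial x_j}(tx)\, dt \;,
\end{equation*}
using the chain rule in the second equality. Pulling the $x_j$ out of the $t$-integral (they are constants with respect to $t$) gives
\begin{equation*}
  f(x) = f(0) + \sum_{j=1}^n x_j \int_0^1 \frac{\partial f}{\partial x_j}(tx)\, dt \;,
\end{equation*}
so the natural candidate is
\begin{equation*}
  g_j(x) := \int_0^1 \frac{\partial f}{\partial x_j}(tx)\, dt \;.
\end{equation*}

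Next I would verify the two required properties. For smoothness of $g_j$, I would invoke the standard theorem on differentiation under the integral sign: since $\partial f / \partial x_j$ is smooth on $U$ and the integration domain $[0,1]$ is compact, all partial derivatives in $x$ of the integrand are continuous in $(t,x)$, so $g_j$ is smooth on $U$. For the value at the origin, evaluating directly gives
\begin{equation*}
  g_j(0) = \int_0^1 \frac{\partial f}{\partial x_j}(0)\, dt = \frac{\partial f}{\partial x_j}(0) \;,
\end{equation*}
as required.

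There is no real obstacle here; the one point that requires the hypothesis is convexity of $U$, which is what makes the segment $\{tx : t \in [0,1]\}$ lie in the domain so that the integral representation is meaningful. Everything else is bookkeeping.
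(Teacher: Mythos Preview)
Your proof is correct and is precisely the standard argument; the paper does not supply its own proof but simply cites \cite[Lemma~2.1]{Milnor_MorseTheory}, and what you have written is exactly Milnor's proof there.
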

\end{removed}

\chapter[(bordered) Legendrian open books]{\LOB{}s \& \BLOB{}s:
  Legendrian open books and bordered Legendrian open books}

\section{Legendrian foliations}

\subsection{General facts about Legendrian foliations}
\label{sec: introduction legendrian foliations}

Let $(M,\xi)$ be a contact manifold that contains a submanifold~$N$.
Generically, if we look at any point $p\in N$ the intersection between
$\xi_p$ and the tangent space $T_pN$ will be a codimension~$1$
hyperplane.
Globally though, the distribution $\mathcal{D} = \xi\cap TN$ may be
singular, because there can be points~$p\in N$ where $T_pN \subset
\xi_p$, and equally important the distribution~$\mathcal{D}$ will only
be in very rare cases a foliation.
In fact, if we choose a contact form~$\alpha$ for $\xi$, then we
obtain by the Frobenius theorem that $\mathcal{D}$ will only be a
(singular) foliation if
\begin{equation*}
  \restricted{\bigl(\alpha\wedge d\alpha\bigr)}{TN} \equiv 0 \;.
\end{equation*}
Another way to state this condition is to say that we have
$\restricted{d\alpha}{\mathcal{D}_p} = 0$ at every regular point~$p\in
N$ of $\mathcal{D}$, so that $\mathcal{D}_p$ has to be an isotropic
subspace of $(\xi_p, d\alpha_p)$.
In particular, this shows that the induced distribution~$\mathcal{D}$
can never be integrable if $\dim \mathcal{D} > \frac{1}{2}\, \dim
\xi$.
We will usually denote the distribution~$\xi\cap TN$ by $\fF$ whenever
it is a singular foliation.
Furthermore, we will call such an $\fF$ a \defin{Legendrian foliation}
if $\dim \fF = \frac{1}{2}\, \dim \xi$, which implies that $N$ has to
be a submanifold of dimension $n+1$ if the dimension of the ambient
contact manifold is $2n+1$. \index{Legendrian foliation}
For reasons that we will briefly sketch below, but that will be
treated extensively from Chapter~\ref{chapter: J-holomorphic curves}
on, we will be mostly interested in submanifolds carrying such a
Legendrian foliation.
Note in particular that in a contact $3$-manifold every
hypersurface~$N$ carries automatically a Legendrian foliation.
Denote the set of points $p\in N$ where $\fF$ is singular by
$\sing(\fF)$.
One of the basic properties of a Legendrian foliation is that for any
contact form~$\alpha$, the restriction $\restricted{d\alpha}{TN}$ does
not vanish on $\sing(\fF)$, because otherwise $T_pN \subset \xi_p$
would be an isotropic subspace of $(\xi_p, d\alpha_p)$ which is
impossible for dimensional reasons.
Since $\restricted{d\alpha}{TN}$ does not vanish on $\sing(\fF)$, we
deduce in particular that $N\setminus \sing(\fF)$ is a dense and open
subset of~$N$.
\textbf{The main reason, why we are interested in submanifolds that
  have a Legendrian foliation is that they often allow us to
  successfully use $J$-holomorphic curve techniques.}
On one side, such submanifolds will be automatically totally real for
any suitable almost complex structure on a symplectic filling, thus
posing a good boundary condition for the Cauchy-Riemann equation:
The solution space of a Cauchy-Riemann equation with totally real
boundary condition is often a finite dimensional smooth manifold, so
that it follows that the moduli spaces of $J$-holomorphic curves whose
boundaries lie in a submanifold with a Legendrian foliation will have
a nice local structure.
A second important property is that the topology of the Legendrian
foliation controls the behavior of $J$-holomorphic curves, and will
allow us to obtain many results in contact and symplectic topology.
Elliptic codimension~$2$ singularities of the Legendrian foliation
``emit'' families of holomorphic disks; suitable codimension~$1$
singularities form ``walls'' that cannot be crossed by holomorphic
disks.
In the rest of this section, we will state some general properties of
Legendrian foliations.
Theorem~\ref{theorem: Legendrian foliation determines contact germ}
shows that a manifold with a Legendrian foliation determines the germ
of the contact structure on its neighborhood.
This allows us to describe small deformations of the Legendrian
foliation, and study almost complex structures more explicitly (see
Section~\ref{sec: local_model} in Chapter~\ref{chapter: J-holomorphic
  curves}).
Theorem~\ref{theorem: realization of Legendrian foliations} gives a
precise characterization of the foliations that can be realized as
Legendrian ones.

\subsection{Singular codimension~$1$ foliations}\label{sec: sing
  codim1 foliation}

The principal aim of this section will be to explain the following
result due to Kupka \cite{KupkaSingFoliation} that tells us that the
behavior of a Legendrian foliation close to a singular point can
always be reduced to the $2$-dimensional situation (see
Fig.~\ref{fig:kupka chart}). \index{Kupka chart}

\begin{theorem}\label{theorem: Kupka_s_theorem}
  Let $N$ be a manifold with a singular foliation~$\fF$ that admits a
  regular equation~$\beta$.
  Then we find around any $p\in\sing(\fF)$ a chart with coordinates
  $(s,t,x_1, \dotsc, x_{n-1})$, such that $\beta$ is represented by
  the $1$-form
  \begin{equation*}
    a(s,t)\, ds  +  b(s,t)\, dt
  \end{equation*}
  for smooth functions~$a$ and $b$.
\end{theorem}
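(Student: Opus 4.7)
The strategy is to exploit the two conditions defining a regular equation---the integrability $\beta \wedge d\beta = 0$ for $\fF$ and the non-vanishing $d\beta_p \ne 0$---by first normalizing $d\beta$ via a Darboux-type argument and then using the integrability to cut $\beta$ itself down to the desired two-variable form. The main obstacle I expect is showing that $d\beta$ has constant rank $2$ in a full neighborhood of $p$; once this is in place, Frobenius and a density argument close out the proof.

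First I would argue that $d\beta$ has constant rank $2$ near $p$. Since $\fF$ is a singular foliation, $\beta \wedge d\beta$ vanishes on the regular locus, and hence by continuity on a full neighborhood of $p$ (the locus $\{\beta = 0\}$ cannot contain an open set where $d\beta \ne 0$). Differentiating gives $d\beta \wedge d\beta = 0$, so $\rank d\beta \le 2$ pointwise; combined with $d\beta_p \ne 0$ and the lower semicontinuity of rank, this forces $\rank d\beta \equiv 2$ in a neighborhood of $p$.

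Since $d\beta$ is closed of constant rank $2$, its kernel is an involutive distribution of rank $n-1$: for $X, Y \in \ker d\beta$ one has $\lie{X} d\beta = d\, i_X d\beta = 0$, whence $i_{[X,Y]} d\beta = \lie{X}(i_Y d\beta) = 0$. Applying Frobenius I pick coordinates $(u, v, x_1, \dots, x_{n-1})$ in which $\ker d\beta = \Span(\p_{x_1}, \dots, \p_{x_{n-1}})$, so $d\beta = f(u,v,x)\, du \wedge dv$; closedness of $d\beta$ then forces $\p_{x_i} f = 0$, and an elementary planar change of coordinates (for instance $s = u$, $t = \int_0^v f(u,\tau)\, d\tau$) normalizes the $2$-form to $d\beta = ds \wedge dt$.

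It remains to verify that $\beta$ annihilates $\ker d\beta$ near $p$, which kills any $dx_i$ component of $\beta$ in the new chart. At a regular point $\beta \wedge d\beta = 0$ together with $\beta \ne 0$ yields a local factorization $d\beta = \beta \wedge \gamma$ in which $\beta$ and $\gamma$ are linearly independent (since $d\beta \ne 0$), so the identity $0 = i_X d\beta = \beta(X)\gamma - \gamma(X)\beta$ for $X \in \ker d\beta$ gives $\beta(X) = 0$. At singular points $\beta$ already vanishes, so $\beta(\p_{x_i}) \equiv 0$ on a dense subset of a neighborhood of $p$ and therefore throughout it. Writing $\beta = A\, ds + B\, dt$ and matching $d\beta$ against $ds \wedge dt$ forces $\p_{x_i} A = \p_{x_i} B = 0$, which yields $\beta = a(s,t)\, ds + b(s,t)\, dt$ as required.
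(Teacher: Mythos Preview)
Your proof is correct, but the route differs from the paper's. The paper proceeds inductively: it picks a single nonvanishing vector field $X$ in $\ker d\beta$ near $p$, and then observes directly from $0 = \iota_X(\beta\wedge d\beta) = \beta(X)\,d\beta$ that $X\in\ker\beta$ and hence $\lie{X}\beta = \iota_X d\beta + d\bigl(\beta(X)\bigr) = 0$. Flowing along $X$ then peels off one coordinate on which $\beta$ has neither a component nor a dependence, and one iterates until dimension~$2$.

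Your approach handles all $n-1$ directions simultaneously: you show $\ker d\beta$ is involutive, invoke Frobenius, normalize $d\beta$ to $ds\wedge dt$, and then argue separately (via a regular-point factorization plus density) that $\beta$ has no $dx_i$-components, and finally read off $x$-independence of the coefficients from the normalized $d\beta$. This is sound, and the intermediate Darboux normalization $d\beta = ds\wedge dt$ is a small bonus. The paper's argument is shorter because the single identity $\lie{X}\beta = 0$ simultaneously kills both the $dx_i$-component and the $x_i$-dependence, so no density argument or normalization of $d\beta$ is needed; on the other hand, your version makes the role of the rank-$2$ closed form $d\beta$ more transparent and connects naturally to the symplectic-style normal form.
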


We will call any chart of $N$ of the form described in the theorem a
\defin{Kupka chart}.
Note that the foliation in a Kupka chart restricts on every
$2$-dimensional slice $\{(x_1, \dotsc, x_{n-1})= \mathrm{const}\}$ to
one that does not have any isochore singularities (a term introduced
in \cite{Giroux_91}).

\begin{proof}
  From the Frobenius condition $\beta\wedge d\beta \equiv 0$, it
  follows that $d\beta^2 = 0$, so that if $\dim N > 2$, there is a
  non-vanishing vector field~$X$ on a neighborhood of $p$ with
  $d\beta(X, \cdot) = 0$.
  We can also easily see that $X \in \ker \beta$ and $\lie{X} \beta =
  0$, because
  \begin{equation*}
    0 = \iota_X \bigl(\beta \wedge d\beta\bigr) =
    \beta(X)\,  d\beta - \beta \wedge \bigl(\iota_X
    d\beta\bigr) = \beta(X)\,  d\beta\;,
  \end{equation*}
  and $d\beta$ does not vanish on a neighborhood of~$p$.
  Let $\Phi^X_t$ be the flow of $X$, and choose a small
  hypersurface~$\Sigma$ transverse to~$X$.
  Using the diffeomorphism
  \begin{equation*}
    \Psi\colon \Sigma\times (-\epsilon, \epsilon) \to N, \,
    (p,t) \mapsto \Phi_t^X(p)
  \end{equation*}
  we can pull back the $1$-form~$\beta$ to $\Sigma\times
  (-\epsilon,\epsilon)$ and we see it reduces to
  $\restricted{\beta}{T\Sigma}$.
  By repeating this construction the necessary number of times we
  obtain the desired statement.
\end{proof}

\begin{figure}[htbp]
  \centering
  \includegraphics[height=4.5cm,keepaspectratio]{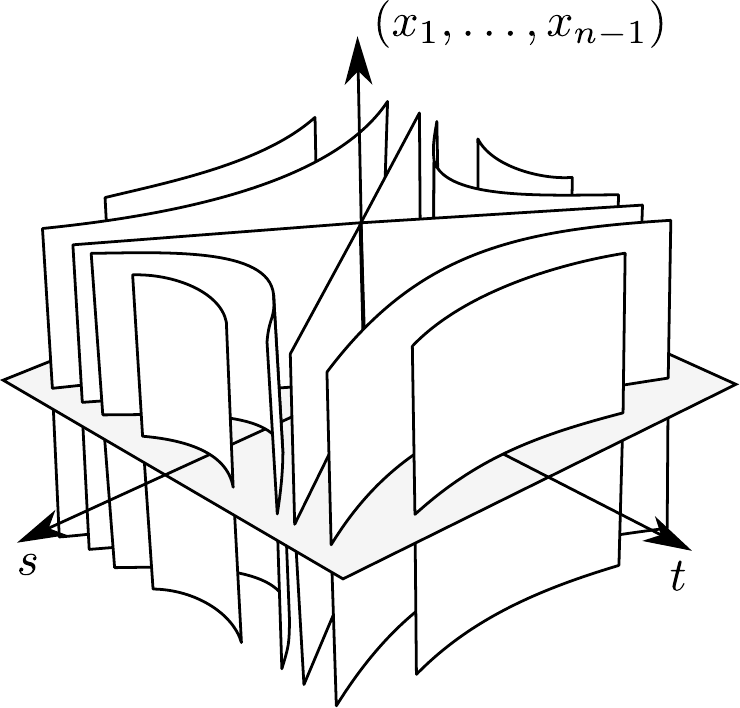}
  \caption{The singularities of a Legendrian foliation look locally
    like the product of $\RR^{n-1}$ with a foliation in the
    plane.}\label{fig:kupka chart}
\end{figure}

\subsection{Local behavior of Legendrian foliations}

We state the following two theorems without proof, and point the
interested reader to \cite{NiederkrugerHabilitation} for more details.
The situation in Section~\ref{sec: codim 1 sing} is treated in these
notes in full completeness to illustrate the flavor of the necessary
methods.
The first result tells us that a Legendrian foliation determines the
germ of the contact structure in its neighborhood.

\begin{theorem}\label{theorem: Legendrian foliation determines contact
    germ}
  Let $N$ be a compact manifold (possibly with boundary) and let
  $(M_1,\xi_1)$ and $(M_2,\xi_2)$ be contact manifolds.
  Assume that two embeddings $\iota_1\colon N \hookrightarrow M_1$ and
  $\iota_2\colon N \hookrightarrow M_2$ are given such that $\xi_1$
  and $\xi_2$ induce on $N$ the same cooriented Legendrian
  foliation~$\fF$.
  Then we find neighborhoods $U_1\subset M_1$ of $\iota_1(N)$ and
  $U_2\subset M_2$ of $\iota_2(N)$ together with a contactomorphism
  \begin{equation*}
    \Phi\colon (U_1,\xi_1) \to (U_2,\xi_2)
  \end{equation*}
  that preserves $N$, that means, $\Phi\circ \iota_1 = \iota_2$.
\end{theorem}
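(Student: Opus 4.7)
My plan is to run a Moser-type isotopy argument after first reducing to a common ambient neighborhood on which the two contact forms match to sufficient order along~$N$. Using the coorientations, I choose positive contact forms $\alpha_1$ and $\alpha_2$ for $\xi_1$ and $\xi_2$. By hypothesis both pullbacks $\iota_i^*\alpha_i$ are regular equations for the same cooriented Legendrian foliation~$\fF$ and hence differ by a positive function on $N$; after conformally rescaling $\alpha_2$ by any positive extension of this function to $M_2$, I may assume $\iota_1^*\alpha_1 = \iota_2^*\alpha_2 =: \beta$. Next I invoke a tubular neighborhood theorem: the hyperplanes $\restricted{\xi_i}{\iota_i(N)}$ each contain the image of $T\fF$ and are cooriented, so they determine oriented line sub-bundles of the normal bundles $\nu_i$ of $\iota_i(N) \subset M_i$. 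I pick a bundle isomorphism $\nu_1 \cong \nu_2$ that identifies these oriented lines and combine with normal exponential maps to obtain a diffeomorphism $\Psi\colon U_1 \to U_2$ of neighborhoods with $\Psi\circ\iota_1 = \iota_2$ and $\Psi_*\restricted{\xi_1}{\iota_1(N)} = \restricted{\xi_2}{\iota_2(N)}$. After replacing $\alpha_2$ by $\Psi^*\alpha_2$, I may henceforth assume $M_1 = M_2 = U$, $\iota_1 = \iota_2$ is the inclusion, and $\alpha_1 = \alpha_2$ on the full restricted bundle $TM|_N$.

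With this set-up I apply the Moser trick to the path $\alpha_t := (1-t)\alpha_1 + t\alpha_2$, $t\in[0,1]$. Because all $\alpha_t$ coincide with $\alpha_1$ on $TM|_N$, the contact condition $\alpha_t \wedge d\alpha_t^n \ne 0$ holds along $N$; shrinking $U$ using compactness of $N$, I arrange that $\alpha_t$ is contact on $U$ for every $t$. Let $R_t$ denote the Reeb vector field of $\alpha_t$, put $\mu_t := \dot\alpha_t(R_t)$, and use the non-degeneracy of $\restricted{d\alpha_t}{\xi_t}$ to define the unique $X_t \in \xi_t$ satisfying
\begin{equation*}
  \iota_{X_t}\, d\alpha_t = \mu_t\,\alpha_t - \dot\alpha_t \;.
\end{equation*}
Cartan's formula and the choice of $\mu_t$ guarantee that the flow of $X_t$ pulls $\alpha_t$ back to a positive multiple of $\alpha_1$, provided it is defined for $t\in[0,1]$. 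The crucial point is that $\dot\alpha_t = \alpha_2 - \alpha_1$ vanishes on $TM|_N$, so the right-hand side above vanishes on all of $T_pM$ for every $p\in N$; the only kernel direction of $d\alpha_t$ in $T_pM$ is spanned by $R_t \notin \xi_t$, forcing $\restricted{X_t}{N} \equiv 0$. Hence the flow is defined on a neighborhood of $N$ for all $t\in[0,1]$, fixes $N$ pointwise, and its time-one map furnishes the required contactomorphism $\Phi$.

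The main difficulty lies in the alignment step of the first paragraph: without choosing the tubular identification so that the cooriented contact hyperplanes match along $N$, the forms $\alpha_1$ and $\alpha_2$ would only agree on $TN$ rather than on the full bundle $TM|_N$, and the Moser vector field $X_t$ would merely be tangent to $N$ within $\fF$. The resulting flow would then preserve $N$ setwise but possibly not pointwise, so the desired identity $\Phi\circ\iota_1 = \iota_2$ could fail. Every other ingredient -- the conformal rescaling, the tubular neighborhood theorem, and the contact Moser trick itself -- is standard contact-geometric machinery.
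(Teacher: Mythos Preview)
The paper does not actually prove this theorem: it is one of the two results that the text explicitly ``state[s] \ldots without proof,'' referring instead to \cite{NiederkrugerHabilitation}. So there is no in-paper proof to compare against, and your Moser-type argument is precisely the standard route one would expect such a proof to take.

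Your outline is sound, and your identification of the key point --- that one must match the full contact hyperplanes $\xi_i$ along $N$ (not merely their traces on $TN$) so that the Moser field vanishes on $N$ rather than being merely tangent to $\fF$ --- is exactly right. Two small points deserve tightening. First, the assertion that the two pullbacks $\iota_i^*\alpha_i$ ``differ by a positive function on $N$'' is immediate only on the regular part of $\fF$; at singular points both $1$-forms vanish, and one must check (e.g.\ in a Kupka chart) that the quotient extends smoothly and positively across $\sing(\fF)$. An easier way around this is to perform the rescaling \emph{after} the tubular identification: once $\Psi_*\xi_1|_N = \xi_2|_N$, the nowhere-vanishing forms $\alpha_1$ and $\Psi^*\alpha_2$ have the same kernel along $N$ and hence differ by a manifestly smooth positive function. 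Second, your description of the bundle isomorphism $\nu_1 \cong \nu_2$ via ``oriented line sub-bundles of the normal bundles'' does not match the geometry (at regular points $\xi_i$ surjects onto $\nu_i$, while at singular points its image is a hyperplane in $\nu_i$); what you actually need is a bundle isomorphism $TM_1|_N \to TM_2|_N$ restricting to the identity on $TN$ and carrying $\xi_1$ to $\xi_2$ with the given coorientations, which one can build for instance from the splittings $TM_i|_N = \xi_i|_N \oplus \langle R_i\rangle$. With these clarifications the argument goes through.
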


Another useful fact is the following theorem that tells us that the
singular foliations that can be realized as Legendrian ones are
exactly those that admit a regular equation (using the convention from
the introduction).
This result generalizes the $3$-dimensional situation
\cite{Giroux_91}, where this property was called a foliation without
``\textit{isochore singularities}''.

\begin{theorem}\label{theorem: realization of Legendrian foliations}
  Let $N$ be a manifold with a singular codimension-$1$
  foliation~$\fF$ given by a regular equation~$\beta$.
  Then we can find an (open) cooriented contact manifold~$(M,\xi)$
  that contains $N$ as a submanifold such that $\xi$ induces $\fF$ as
  Legendrian foliation on~$N$.
\end{theorem}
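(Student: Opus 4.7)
The plan is to construct $M$ as a small open neighborhood of $N$ inside the total space of an appropriate rank-$n$ vector bundle over $N$ (e.g.\ a trivial $\RR^n$-bundle), equipped with an explicit $1$-form $\alpha$ whose restriction to $N$ (embedded as the zero section) equals $\beta$ and which is contact on a neighborhood of $N$. Since the desired conclusion is purely local around $N$, it suffices to produce such models in coordinate charts and then glue.

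First I would handle a singular point $p \in \sing(\fF)$. By Theorem~\ref{theorem: Kupka_s_theorem} there exist coordinates $(s,t,x_1,\dotsc,x_{n-1})$ on $N$ near $p$ in which $\beta = a(s,t)\,ds + b(s,t)\,dt$ and $d\beta(0) \ne 0$. Taking $M$ locally as this chart times $\RR^n$ with coordinates $(y_1,\dotsc,y_{n-1},z)$, I would test the ansatz
\[
  \alpha = \beta + dz + \sum_{i=1}^{n-1} y_i\, dx_i
\]
and compute $\alpha \wedge d\alpha^n$ along the zero section. Using $\beta \wedge d\beta \equiv 0$ (Frobenius), the top-form collapses, up to sign and positive constants, to $d\beta \wedge dz \wedge dx_1 \wedge dy_1 \wedge \dotsb \wedge dx_{n-1} \wedge dy_{n-1}$, which is non-zero precisely because $\beta$ is a regular equation, i.e.\ $d\beta(0) \ne 0$ in the $(s,t)$-plane. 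Near regular points of $\fF$ (where $d\beta$ may vanish) Frobenius furnishes a foliated chart in which $\beta = u\,dt$ for a non-vanishing function $u$, and the standard Darboux-type form $\alpha = u\,dt + \sum y_i\,dx_i$ is contact on a neighborhood of $N$ in the chart times $\RR^n$.

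To assemble the local pieces into a global contact manifold $(M,\xi)$, I would invoke Theorem~\ref{theorem: Legendrian foliation determines contact germ}: on overlaps both local models induce the same cooriented Legendrian foliation on the corresponding piece of $N$, so they are contactomorphic through a diffeomorphism fixing $N$. Using these contactomorphisms as transition data yields $M$ as the quotient of the disjoint union of local ambient charts.

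The hard part will be this last gluing step. The germ-uniqueness theorem produces local contactomorphisms but not a canonical cocycle, so one must either shrink all charts carefully and select coherent transition maps, or else fix a single ambient bundle $E \to N$ and blend the local $1$-forms via a partition of unity together with a Moser-type interpolation (needed because $\alpha \mapsto \alpha \wedge d\alpha^n$ is nonlinear, so convex combinations of contact forms need not remain contact). The careful execution of this argument, together with the choice of rescaling factor near regular points that makes the relevant $2$-form non-degenerate, is what is carried out in detail in~\cite{NiederkrugerHabilitation}.
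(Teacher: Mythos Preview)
The paper does not prove this theorem: it explicitly says ``We state the following two theorems without proof, and point the interested reader to \cite{NiederkrugerHabilitation} for more details.'' So there is no proof in the paper to compare your proposal against, and your own final sentence defers to the same reference.

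As a sketch, your local models are correct. In the Kupka chart your computation of $\alpha\wedge d\alpha^n$ along the zero section is right: the Frobenius identity and $d\beta^2=0$ reduce everything to a nonzero multiple of $dz\wedge d\beta\wedge\bigwedge_i dy_i\wedge dx_i$, and the hypothesis that $\beta$ is a \emph{regular} equation is exactly what makes $d\beta\neq 0$ at singular points. Note, however, that this same formula shows your Kupka-chart model \emph{fails} to be contact at regular points where $d\beta=0$; so you genuinely need the second model there, and the two have to be matched on overlaps. You see this, but it is worth saying explicitly.

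The gluing step is where the real content lies, and your discussion is honest about the difficulty without resolving it. Invoking Theorem~\ref{theorem: Legendrian foliation determines contact germ} gives you local contactomorphisms, but, as you note, no cocycle; simply ``shrinking charts carefully'' is not a proof. The partition-of-unity route is more promising but also not free: convex combinations of your local $\alpha$'s need not be contact, and a Moser argument requires you to already have a path of contact forms, which is precisely what you are trying to build. One way to tame this is to fix a single global ambient bundle and a single global ansatz from the start (so there is nothing to glue at the level of manifolds), and then argue that the contact condition holds near $N$ by the local computations; but making the normal directions match coherently between the singular and regular models still takes work. In short: the outline is reasonable and the local analysis is fine, but as written the argument is a program rather than a proof, consistent with the paper's own decision to cite out.
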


\section{Singularities of the Legendrian foliation}
\label{sec: singularities}

The singular set of a Legendrian foliation~$\fF$ can be extremely
complicated.
We will only discuss briefly a few general properties of such points,
before we specialize all considerations to two simple situations.
Let $N$ have a singular foliation~$\fF$ given by a regular
equation~$\beta$, and let $p\in \sing(\fF)$ be a singular point of
$\fF$.
Choose a Kupka chart~$U$ with coordinates $(s,t,x_1,\dotsc,x_{n-1})$
centered at $p$.
In this chart $\beta$ is represented by
\begin{equation*}
  a(s,t)\, ds + b(s,t)\, dt
\end{equation*}
with two smooth functions $a, b\colon U \to \RR$ that only depend on
the $s$- and $t$-coordinates, and that vanish at the origin.
To understand the shape of the foliation depending on the functions
$a$ and $b$, we might study trajectories of the vector field
\begin{equation*}
  X = b(s,t)\, \frac{\partial}{\partial s}
  - a(s,t)\, \frac{\partial}{\partial t}
\end{equation*}
that spans the intersection of the foliation with the $(s,t)$-slices.
Its divergence $\diverg X = \partial b/\partial s - \partial a
/ \partial t$ does not vanish, since $d\beta \ne 0$.
Up to a genericity condition, we know by the Grobman-Hartman theorem
that the flow of $X$ is $C^0$-equivalent to the flow of its
linearization (see \cite{Palis}).
In dimension~$2$, the Grobman-Hartman theorem even yields a
$C^1$-equivalence, but this does not suffice for our purposes.
For one, we would like to sick to a smooth model for all
singularities, but in fact it even suffices for our goals to only look
at singularities whose leaves are all radial, so we will use below a
more hands-on approach.

\subsection{Elliptic singularities}\label{sec: codim 2 sing}

The first type of singularities we allow for the foliation~$\fF$ on
$N$ are called \defin{elliptic}:
In this case, the point~$p\in \sing(\fF)$ admits a Kupka chart
diffeomorphic to $\RR^2 \times \RR^{n}$ with coordinates
$\{(s,t,x_1,\dotsc,x_n)\}$ in which the foliation is given as the
kernel of the $1$-form
\begin{equation*}
  s\,dt - t\,ds \,
\end{equation*}
that means, the leaves are just the radial rays in each $(s,t)$-slice.
We will always assume that the elliptic singularities of a
foliation~$\fF$ are closed isolated codimension~$2$ submanifolds~$S$
in the interior of $N$ with trivial normal bundle, so that the tubular
neighborhood of $S$ is diffeomorphic to $\DD_\epsilon^2 \times S$.
We assume additionally that the foliation~$\fF$ in this model
neighborhood are the points with constant angular coordinate in the
disk.

\subsection{Singularities of codimension~$1$}
\label{sec: codim 1 sing}

Singular sets of codimension~$1$ are extremely ungeneric, but can be
often found through explicit constructions (as in Example~\ref{expl:
  Legendrian sphere bundle}).
We will show in this section that by slightly deforming the foliated
submanifold one can sometimes modify the foliation in a controlled way
so that the singular set turns into a regular compact leaf (see
Fig.~\ref{fig: deforming ot disk at bdry}).
We will treat this situation in full detail to illustrate what type of
methods are needed for the proofs in this first chapter.

\begin{figure}[htbp]
  \centering
  \includegraphics[height=3.5cm,keepaspectratio]{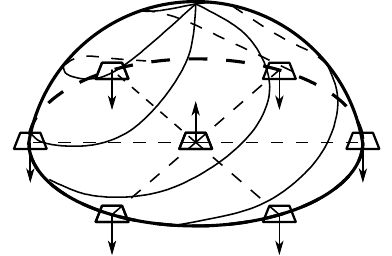}
  \caption{In dimension~$3$ it is well-known that we can get rid of
    $1$-dimensional singular sets of a Legendrian foliation by  slightly
    tilting the surface along the singular set.
    The picture represents how to produce an overtwisted disk whose
    boundary is a compact leaf of the foliation.}\label{fig: deforming
    ot disk at bdry}
\end{figure}

\begin{lemma}\label{lemma: normal form codimension 1 singularity}
  Let $N$ be a compact manifold with a singular codimension~$1$
  foliation~$\fF$ given by a regular equation~$\beta$.
  Assume that the singular set~$\sing(\fF)$ of the foliation contains
  a closed codimension~$1$ submanifold $S\hookrightarrow N$ that is
  cooriented.
  Then we can find a tubular neighborhood of $S$ diffeomorphic to
  $(-\epsilon, \epsilon) \times S$ such that $\beta$ pulls back to
  \begin{equation*}
    s\cdot \widetilde \beta \;,
  \end{equation*}
  where $s$ denotes the coordinate on $(-\epsilon, \epsilon)$, and
  $\widetilde \beta$ is a non-vanishing $1$-form on $S$ that defines a
  regular codimension~$1$ foliation on $S$.
\end{lemma}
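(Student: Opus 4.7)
The plan is to rewrite $\beta$ as $s_0\hat\beta$ in an arbitrary tubular neighborhood via Hadamard's lemma, then find a geometrically preferred vector field~$Y$ whose flow straightens $\hat\beta$, and finally absorb a residual positive scaling function by rescaling the transverse coordinate. Choose any initial tubular neighborhood of~$S$ with a coordinate $s_0$ that is transverse to~$S$ and positive with respect to the coorientation. Since $\beta$ vanishes along~$S$, Hadamard's lemma gives $\beta = s_0\hat\beta$ for a smooth 1-form~$\hat\beta$. The regular-equation hypothesis $d\beta\ne 0$ along~$S$ forces $\widetilde\beta := \restricted{\hat\beta}{TS}$ to be a non-vanishing 1-form on~$S$. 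Moreover, from $\beta = s_0\hat\beta$ one computes $\beta\wedge d\beta = s_0^2\,\hat\beta\wedge d\hat\beta$, so Frobenius integrability $\beta\wedge d\beta = 0$ yields $\hat\beta\wedge d\hat\beta \equiv 0$ and hence a decomposition $d\hat\beta = \hat\beta\wedge\eta$ for some smooth 1-form~$\eta$.

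The crucial step is to produce a vector field~$Y$ on a neighborhood of~$S$ satisfying $\hat\beta(Y)=0$ and positively transverse to~$S$. At each $p\in S$, the hyperplane $\ker\hat\beta_p\subset T_pN$ cannot lie inside $T_pS$, because its restriction to $TS$ is $\widetilde\beta$, which is nonzero; hence $\ker\hat\beta_p\setminus T_pS$ is nonempty and, thanks to the coorientation, splits into a positive and a negative open half. A partition-of-unity argument on~$S$ assembles such pointwise choices into a smooth section of the rank-$(\dim N-1)$ subbundle $\ker\hat\beta$ over~$S$ that is positively transverse to~$S$; we then extend this section to a neighborhood of~$S$ as a section of $\ker\hat\beta$.

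Since $S$ is compact, the flow of~$Y$ straightens a tubular neighborhood into coordinates $(s,p)\in(-\delta,\delta)\times S$ with $Y=\partial_s$. In these coordinates $\hat\beta$ has no $ds$-component (because $\hat\beta(\partial_s)=0$), so locally $\hat\beta = \sum_i B_i(s,x)\,dx^i$ for coordinates~$x$ on~$S$. Cartan's formula together with $d\hat\beta = \hat\beta\wedge\eta$ yields
\begin{equation*}
  \lie{Y}\hat\beta \;=\; \iota_Y d\hat\beta + d\iota_Y\hat\beta \;=\; -\eta(Y)\,\hat\beta,
\end{equation*}
so every $B_i$ satisfies the same linear ODE $\partial_s B_i = -\eta(Y)\,B_i$. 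Its solution has the form $B_i(s,x) = F(s,x)\,B_i(0,x)$ for a single positive function~$F$ with $F(0,\cdot)\equiv 1$. Consequently $\hat\beta = F\cdot\pi^*\widetilde\beta$ and $\beta = sF\cdot\pi^*\widetilde\beta$, where $\pi\colon(s,p)\mapsto p$ is the projection.

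Finally, set $\sigma := sF(s,x)$. Because $F(0,\cdot)=1$ we have $\partial\sigma/\partial s|_{s=0}=1$, so $(s,p)\mapsto(\sigma,p)$ is a valid change of tubular coordinate that preserves $S=\{\sigma=0\}$, and in this new coordinate $\beta = \sigma\cdot\pi^*\widetilde\beta$; relabelling $\sigma$ back as~$s$ yields the stated normal form. Non-vanishing of $\widetilde\beta$ has already been established, and $\widetilde\beta\wedge d\widetilde\beta = 0$ follows from pulling back $\beta\wedge d\beta = 0$ and dividing by~$s^2$, so $\widetilde\beta$ indeed defines a regular codimension-$1$ foliation on~$S$. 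The principal obstacle throughout is the construction of~$Y$: being simultaneously tangent to~$\fF$ and transverse to~$S$ is possible precisely because $\widetilde\beta\ne 0$, which is exactly where the regular-equation hypothesis $d\beta|_S\ne 0$ enters in an essential way.
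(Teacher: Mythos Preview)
Your argument is correct and follows the same overall strategy as the paper: build a vector field lying in $\ker\beta$ and transverse to~$S$, use its flow to produce a tubular neighborhood, show that the defining $1$-form of the foliation is invariant in the transverse direction up to a positive factor, and finally absorb that factor into the transverse coordinate. The paper constructs the transverse vector field via Kupka charts (Theorem~\ref{theorem: Kupka_s_theorem}) and then works with $\gamma = \iota_X d\beta$, whereas you first factor $\beta = s_0\hat\beta$ by Hadamard's lemma and carry $\hat\beta$ throughout; this is a mild streamlining that bypasses the earlier Kupka reduction, at the cost of having two transverse coordinates in play.

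There is one bookkeeping slip worth fixing: after passing to the flow coordinates~$(s,p)$ of~$Y$ you still have $\beta = s_0\hat\beta = s_0 F\,\pi^*\widetilde\beta$, not $sF\,\pi^*\widetilde\beta$, since $s_0$ was the \emph{initial} transverse coordinate and need not equal~$s$. You should therefore set $\sigma := s_0 F$ and check that $(s,p)\mapsto(\sigma,p)$ is a valid change of coordinates near~$S$; this holds because $\partial\sigma/\partial s\big|_{s=0} = ds_0(Y)\big|_S \cdot F\big|_S$, where $ds_0(Y)\big|_S \ne 0$ since $Y$ is transverse to $S=\{s_0=0\}$ and $F\big|_S = 1$. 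With this correction the argument goes through as written.
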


\begin{proof}
  Choose a coorientation for $S$.
  We first find a vector field $X$ on a neighborhood of $S$ that is
  transverse to $S$ and lies in the kernel of $\beta$.
  Study the local situation in a Kupka chart~$U$ around a point $p\in
  S$ with coordinates $(s,t,x_1,\dotsc,x_{n-1})$.
  Assume that $\beta$ restricts to
  \begin{equation*}
    a(s,t)\, ds + b(s,t)\, dt \;,
  \end{equation*}
  such that $S\cap U$ corresponds to the subset~$\{s=0\}$, and such
  that $s$ increases in direction of the chosen coorientation.
  Since $a$ and $b$ vanish along $S\cap U$, we may write this form
  also as
  \begin{equation*}
    s\,a_s(s,t)\, ds + s\, b_s(s,t)\, dt =
    s\,\bigl(a_s(s,t)\, ds + b_s(s,t)\, dt\bigr)
  \end{equation*}
  with smooth functions $a_s$ and $b_s$ that satisfy the conditions
  \begin{equation*}
    a_s(0,t) = \frac{\partial a}{\partial s}(0,t)\quad
    \text{ and } \quad
    b_s(0,t) = \frac{\partial b}{\partial s}(0,t) \;.
  \end{equation*}
  The function $b_s$ does not vanish in a small neighborhood of $S\cap
  U$, because $0 \ne d\beta = \partial_s b\, ds\wedge dt$.
  Choose then on the Kupka chart~$U$ the smooth vector field
  \begin{equation*}
    X_U\bigl(s,t,x_1,\dotsc, x_{n-1}\bigr) =
    \partial_s - \frac{a(s,t)}{b(s,t)}\, \partial_t
    = \partial_s - \frac{a_s(s,t)}{b_s(s,t)}\, \partial_t \;.
  \end{equation*}
  This field lies in $\fF$, and is positively transverse to $S\cap U$
  Cover the singular set~$S$ with a finite number of Kupka charts
  $U_1, \dotsc, U_N$, construct vector fields $X_{U_j}$ according to
  the method described above, and glue them together to obtain the
  desired vector field~$X$ by using a partition of unity subordinate
  to the cover.
  We can use the flow of $X$ to obtain a tubular neighborhood of $S$
  that is diffeomorphic to $(-\epsilon,\epsilon) \times S$, where
  $\{0\}\times S$ corresponds to the submanifold~$S$, and $X$
  corresponds to the field~$\partial_s$, where $s$ is the coordinate
  on the interval~$(-\epsilon, \epsilon)$, and since $\beta(X) \equiv
  0$, it follows that $\beta$ does not contain any $ds$-terms.
  Let $\gamma$ be the $1$-form given by $\iota_X d\beta$.
  This form does not vanish on a neighborhood of the singular set~$S$,
  because $d\beta \ne 0$ while $\restricted{\beta}{TS} \equiv 0$, and
  so we can write
  \begin{equation*}
    0 \equiv \iota_X \bigl(\beta\wedge d\beta\bigr)
    = \beta(X)\, d\beta - \beta \wedge \bigl(\iota_X d\beta\bigr) 
    = - \beta\wedge \gamma \;.
  \end{equation*}
  This means that there is a smooth function $F\colon
  (-\epsilon,\epsilon) \times S \to \RR$ with $\restricted{F}{S} = 0$
  such that $\beta = F \gamma$.
  Furthermore, we get that
  \begin{equation*}
    \gamma = \iota_X d\beta =
    dF(X) \, \gamma + F\, \iota_X d\gamma
  \end{equation*}
  does not vanish along~$S$, but $F$ does, so we obtain on $S$ that
  $dF(X) = 1$, and it follows that $S$ is a regular zero level set of
  the function~$F$.
  In fact, we can also easily see from
  \begin{equation*}
    0 \equiv \beta \wedge d\beta = F^2\, \gamma\wedge d\gamma
  \end{equation*}
  that $\gamma\wedge d\gamma$ vanishes everywhere so that $\ker\gamma$
  defines a regular foliation~$\widetilde\fF$ that agrees with the
  initial foliation outside $\sing(\fF)$.
  Finally, we have $\iota_X \gamma \equiv 0$, and using a similar
  argument as before, we see
  \begin{equation*}
    0 \equiv \iota_X \bigl(\gamma\wedge d\gamma\bigr) =
    - \gamma\wedge \iota_X d\gamma
  \end{equation*}
  so that there is a smooth function $f\colon (-\epsilon, \epsilon)
  \times S \to \RR$ such that $\lie{X} \gamma = \iota_X d\gamma = f\,
  \gamma$.
  The flow in $s$-direction possibly rescales the $1$-form~$\gamma$,
  but it leaves its kernel invariant, thus the
  foliation~$\widetilde\fF$ is tangent to the $s$-direction and
  $s$-invariant.
  We can hence represent $\widetilde\fF$ on $(-\epsilon, \epsilon)
  \times S$ as the kernel of the $1$-form $\widetilde\beta =
  \restricted{\gamma}{TS}$ that does not depend on the $s$-coordinate,
  and does not have any $ds$-terms.
  It follows that $\gamma$ is equal to $\widetilde F\,
  \restricted{\gamma}{TS}$ for a function $\widetilde F$ that
  restricts on $S$ to $1$.
  For the initial $1$-form~$\beta$ this means that $\beta = (F
  \widetilde F)\, \widetilde \beta$, and $F \widetilde F$ is a smooth
  function and $\{0\}\times S$ is the (regular) level set of~$0$.
  We can redefine the model $(-\epsilon, \epsilon) \times S$ by using
  the flow of a vector field $G^{-1}\, \partial_s$ with $G=
  \partial_s (F\widetilde F)$ to achieve that $\beta$ reduces on this
  new model to $s\,\widetilde \beta$.
\end{proof}

Suppose from now on that the singular foliation is of the form
described in Lemma~\ref{lemma: normal form codimension 1 singularity},
that means, we have a closed manifold~$S$ with a regular
codimension~$1$ foliation $\fF_S$ given as the kernel of a
$1$-form~$\widetilde \beta$, and $N$ is diffeomorphic to $(-\epsilon,
\epsilon) \times S$ with a singular foliation~$\fF$ given as the
kernel of the $1$-form~$s\,\widetilde\beta$.
Remember that a $1$-form $\sigma$ on $S$ defines a section in $T^*S$
with the property that $\sigma^*\lcan = \sigma$.
We may realize $\fF$ as a Legendrian foliation, by embedding
$(-\epsilon,\epsilon) \times S$ into the $1$-jet space
$\bigl(\RR\times T^*S, dz + \lcan \bigr)$ via the map
\begin{equation*}
  (s,p) \mapsto (0, s\,\widetilde \beta)\;.
\end{equation*}
The foliations agree, and according to Theorem~\ref{theorem:
  Legendrian foliation determines contact germ} this model describes a
small neighborhood of $(N, s\widetilde\beta)$ embedded into an
arbitrary contact manifold.
Assume from now on additionally that $\widetilde \beta$ is a
\emph{closed} $1$-form on $S$ (by a result of Tischler, $S$ fibers
over the circle \cite{TischlerSuspensions}).
Choose a smooth odd function $f\colon (-\epsilon, \epsilon) \to \RR$
with compact support such that the derivative $f'(0) = -1$.
The section
\begin{equation*}
  (-\epsilon,\epsilon) \times S \hookrightarrow  \RR\times T^*S,
  \quad (s,p) \mapsto \bigl(\delta f(s), s\widetilde \beta \bigr)
\end{equation*}
describes for small $\delta > 0$ a $C^\infty$-small deformation of $N$
that agrees away from $S$ with $N$.
The perturbed submanifold~$N'$ also carries a Legendrian foliation
induced by $\ker\bigl(ds + \lcan\bigr)$, because the pull-back form
$\beta' = f' \, ds + s \widetilde \beta$ gives
\begin{equation*}
  \beta' \wedge d\beta' = \bigl(f' \, ds + s \widetilde \beta\bigr)
  \wedge \bigl(ds\wedge \widetilde \beta + s\,d\widetilde \beta\bigr) =
  s\, \bigl( f' \, ds + s\, \widetilde\beta\bigr) \wedge  d \widetilde\beta
  =  s\, f' \, ds \wedge  d\widetilde \beta \;,
\end{equation*}
which vanishes, so that $\beta'$ satisfies the Frobenius condition.
Furthermore, since $\beta'$ itself does not vanish anywhere, it is
easy to check that $\ker \beta'$ defines a regular foliation~$\fF'$,
and that $\{0\}\times S$ is a closed leaf of $\fF'$.
As a conclusion, we obtain

\begin{corollary}
  Let $(M, \xi)$ be a contact manifold containing a submanifold~$N$
  with an induced Legendrian foliation~$\fF$.
  Assume that the singular set of $\fF$ contains a cooriented closed
  codimension~$1$-submanifold $S\subset N$, and that there is a
  regular foliation~$\fF$ that agrees outside $N$ with $\fF$, and that
  corresponds on $S$ with a fibration over the circle.
  Using an arbitrary small $C^\infty$ perturbation of $N$ close to
  $S$, we obtain a new Legendrian foliation for which $S$ has become a
  regular closed leaf.
\end{corollary}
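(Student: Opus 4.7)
The plan is to assemble the three main ingredients that have already been established in this section: the normal form lemma, the germ--determination theorem, and the explicit tilting perturbation worked out in the paragraphs preceding the corollary.

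First, I would apply Lemma~\ref{lemma: normal form codimension 1 singularity} to the pair $(N,S)$. This produces a tubular neighborhood of $S$ in $N$ diffeomorphic to $(-\epsilon,\epsilon)\times S$ on which a regular equation for $\fF$ becomes $s\,\widetilde\beta$, where $\widetilde\beta$ is a non-vanishing $1$-form on $S$ whose kernel is the regular codimension--$1$ foliation that extends $\restricted{\fF}{N\setminus S}$ across $S$. By hypothesis this regular foliation is a fibration of $S$ over $\SSS^1$, so by Tischler's theorem we may arrange that $\widetilde\beta$ is closed.

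Second, I would invoke Theorem~\ref{theorem: Legendrian foliation determines contact germ}. Combined with the embedding
\begin{equation*}
  (s,p)\longmapsto \bigl(0,\, s\,\widetilde\beta_p\bigr)
\end{equation*}
of $(-\epsilon,\epsilon)\times S$ into the $1$-jet space $\bigl(\RR\times T^*S,\,dz+\lcan\bigr)$, the theorem lets me replace a neighborhood of $S$ in $(M,\xi)$ by its standard jet-space model. Any perturbation of $N$ carried out in the model therefore transports back, through the contactomorphism, to an honest perturbation of $N$ in $(M,\xi)$, and such a perturbation can be made trivial away from an arbitrarily small neighborhood of $S$.

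Third, I would perform the perturbation exactly as sketched in the paragraphs preceding the statement. Pick a smooth odd function $f\colon(-\epsilon,\epsilon)\to\RR$ with compact support and $f'(0)=-1$, and for small $\delta>0$ deform $N$ in the model by
\begin{equation*}
  (s,p)\longmapsto \bigl(\delta f(s),\, s\,\widetilde\beta_p\bigr).
\end{equation*}
The pull-back of $dz+\lcan$ is $\beta'=\delta f'(s)\,ds + s\,\widetilde\beta$, and using $d\widetilde\beta=0$ one gets $\beta'\wedge d\beta' = \delta s f'(s)\,ds\wedge d\widetilde\beta = 0$, so $\ker\beta'$ is a genuine foliation. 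The form $\beta'$ never vanishes, because at $s=0$ the coefficient $\delta f'(0)=-\delta\ne 0$, and away from $s=0$ it was already non-vanishing by construction; hence the foliation is regular. Finally $\restricted{\beta'}{T(\{0\}\times S)}\equiv 0$, so $\{0\}\times S$ becomes a closed leaf, as required.

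The only point demanding a little attention is the choice of $\delta$: it must be small enough that $\delta f$ stays inside the jet-space chart and that the $ds$-term of $\beta'$ dominates near $s=0$, so that regularity of the perturbed foliation genuinely holds on a full neighborhood of $S$. Both are automatic from the compact support of $f$ and the non-zero value of $f'(0)$, so this is more bookkeeping than obstacle; the real content of the corollary is packaged into Lemma~\ref{lemma: normal form codimension 1 singularity} and Theorem~\ref{theorem: Legendrian foliation determines contact germ}, which reduce the problem to the explicit computation above.
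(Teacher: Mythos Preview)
Your proposal is correct and follows essentially the same approach as the paper: the corollary is stated as the summary of the preceding paragraphs, which apply Lemma~\ref{lemma: normal form codimension 1 singularity}, pass to the jet-space model via Theorem~\ref{theorem: Legendrian foliation determines contact germ}, assume $\widetilde\beta$ closed, and perform the explicit tilting $(s,p)\mapsto(\delta f(s), s\,\widetilde\beta_p)$ with the same verification that $\beta'$ is nowhere vanishing and satisfies the Frobenius condition. One small remark: the direction you need is the easy converse of Tischler (a fibration over $\SSS^1$ yields a closed defining $1$-form, which after a rescaling/reparametrization can replace $\widetilde\beta$), not Tischler's theorem itself; the paper invokes Tischler only parenthetically in the other direction.
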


\section{Examples of Legendrian foliations}

The following example relates Legendrian foliations to Lagrangian
submanifolds.
It is not important by itself, but it may help understanding the
construction of the \BLOB{}s in blown down Giroux domains given in
\cite{WeafFillabilityHigherDimension}, and I believe that it might
pave the way to other applications.

\begin{example}
  Let $P$ be a principal circle bundle over a base manifold~$B$, and
  suppose that $\xi$ is a contact structure on $P$ that is transverse
  to the $\SSS^1$-fibers and invariant under the action.
  It is well-known that by averaging, we can choose an
  $\SSS^1$-invariant contact form~$\alpha$ for $\xi$ and that there
  exists a symplectic form~$\omega$ on $B$ such that $\pi^*\omega =
  d\alpha$, where $\pi$ is the bundle projection $\pi\colon P \to B$.
  The symplectic form~$\omega$ represents the image of the Euler
  class~$e(P)$ in $H^2(B,\RR)$, and hence $P$ cannot be a trivial
  bundle (see \cite{BoothbyWang}).
  The manifold~$(P_L, \alpha)$ is usually called the
  \defin{pre-quantization of the symplectic manifold $(B,\omega)$} (or
  the \defin{Boothby-Wang manifold}).
  Let $L$ be a Lagrangian submanifold in $(B,\omega)$, and let $P_L :=
  \pi^{-1}(L)$ be the fibration over $L$.
  Note first that in this situation, we have $\restricted{\omega}{TL}
  = 0$, so that $e(P_L) = \restricted{e(P)}{L}$ will automatically
  either vanish or be a torsion class.
  We assume that $e(P_L) = 0$, so that the fibration~$P_L$ will be
  trivial, and we can find a section $\sigma\colon L \to P_L$.
  We have $\restricted{\bigl(\alpha\wedge d\alpha\bigr)}{T P_L} =
  \restricted{\bigl(\alpha\wedge \pi^*\omega\bigr)}{T P_L} \equiv 0$,
  so that $\xi$ induces a Legendrian foliation~$\fF$ on $P_L$.
  Furthermore, since the infinitesimal generator~$X_\varphi$ of the
  circle action satisfies $\alpha(X_\varphi) \equiv 1$, it follows
  that $\fF$ is everywhere regular.
  Using the section~$\sigma$, we can identify $P_L$ with $\SSS^1 \times
  L$, and write $\restricted{\alpha}{T P_L}$ as
  \begin{equation*}
    d\varphi + \beta\;,
  \end{equation*}
  where $\varphi$ is the coordinate on the circle and $\beta$ is a
  closed $1$-form on $L$.
  The leaves of the foliation~$\fF$ are local sections, but they need
  not be global ones, and usually these leaves will not even be
  compact.
  Instead the proper way to think of them is as the horizontal lift of
  the flat connection $1$-form~$\restricted{\alpha}{T P_L}$.
  Choose any loop $\gamma \subset L$ based at a point~$p_0\in L$.
  We want to lift $\gamma(t)$ to a path $\widetilde \gamma(t) =
  \bigl(e^{i\varphi(t)}, \gamma(t)\bigr)$ in $P_L \cong \SSS^1 \times
  L$ that is always tangent to a leaf of $\fF$, so that
  \begin{equation*}
    \widetilde\gamma'(t) = \bigl(-\beta(\gamma'(t)), \gamma'(t)\bigr) \;.
  \end{equation*}
  In particular start and end point of $\widetilde \gamma$ are related
  by the monodromy
  \begin{equation*}
    C_\gamma := -\int_\gamma \beta\;,
  \end{equation*}
  that means, if $\widetilde \gamma$ starts at $\bigl(e^{i\varphi_0},
  p_0\bigr) \in \SSS^1 \times L$, then its end point will be
  $\bigl(e^{i(\varphi_0 + C_\gamma)}, p_0\bigr)$.
  Note that since the connection is flat, that means, $\beta$ is
  closed, two homologous paths from $p_0$ to $p_1$ will lift the end
  point in the same way.
  Thus we have a well-defined map
  \begin{equation*}
    H_1(L, \ZZ) \to \SSS^1 \;.
  \end{equation*}
  The leaves of the Legendrian foliation will only be compact, if the
  image of this map is discrete.
  Note that the embedding of $H^1(L,\QQ) \to H^1(L,\RR)$ is dense, and
  so we find a $1$-form~$\beta'$ arbitrarily close to $\beta$ such
  that the monodromy for every loop in $L$ will be a rational number.
  Clearly, we can extend $\delta = \beta'-\beta$ to a $1$-form defined
  on the whole bundle~$P$, and suppose that $\delta$ is sufficiently
  small so that $\alpha' = \alpha + \delta$ determines a contact
  structure that is isotopic to the initial one.
  We may hence suppose that after a small perturbation of $\alpha$
  that the Legendrian foliation on $P_L$ is given by $d\phi + \beta'$.
  In fact, since $H_1(L, \ZZ)$ is finitely generated, we find a
  number~$c\in \QQ$ such that all possible values of the monodromy are
  a multiple of $c$, and by slightly perturbing $\alpha$ we obtain a
  regular Legendrian foliation on $P_L$, with compact leaves.
\end{example}

The second example gives a Legendrian foliation with a codimension~$1$
singular set.

\begin{example}\label{expl: Legendrian sphere bundle}
  Let $L$ be any smooth $(n+1)$-dimensional manifold with a Riemannian
  metric~$g$.
  It is well-known that the unit cotangent
  bundle~$\SSS\bigl(T^*L\bigr)$ carries a contact structure given as
  the kernel of the canonical $1$-form~$\lcan$.
  The fibers of this bundle are Legendrian spheres, hence if we choose
  any smooth regular loop~$\gamma\colon \SSS^1 \to L$, and if we study
  the fibers lying over this path, we obtain the submanifold $N_\gamma
  := \pi^{-1}(\gamma)$ that has a singular Legendrian foliation.
  In fact, we can naturally decompose $\restricted{T^*L}{\gamma}$ into
  the two subsets~$U_+$ and $U_-$ defined as
  \begin{equation*}
    U_\pm = \bigl\{\nu\in N_\gamma\bigm|\,
    \pm \nu (\gamma') \ge 0 \bigr\} \;.
  \end{equation*}
  These sets correspond in each fiber of $N_\gamma$ to opposite
  hemispheres.
  The singular set of the Legendrian foliation on $N_\gamma$ is
  $U_+\cap U_-$, and that the regular leaves correspond to the
  intersection of each fiber of $N_\gamma$ with the interior of $U_+$
  and $U_-$.
  In particular, if $N_\gamma$ is orientable, we obtain that it can be
  written as
  \begin{equation*}
    \bigl( \SSS^1 \times \SSS^n, x_0\,d\varphi\bigr)\;,
  \end{equation*}
  where $\varphi$ is the coordinate on $\SSS^1$, and $(x_0, \dotsc,
  x_n)$ are the coordinates on $\SSS^n$.
  Using the results of Section~\ref{sec: codim 1 sing}, we can perturb
  $N_\gamma$ to a submanifold with a regular Legendrian foliation
  composed of two Reeb components.
\end{example}

\section{Legendrian open books}\label{sec: Legendrian open books}

Even though we discussed Legendrian foliations quite generally, we
will only be interested in two special types: \emph{Legendrian open
  books} introduced in \cite{NiederkrugerRechtman} and \emph{bordered
  Legendrian open books} introduced in
\cite{WeafFillabilityHigherDimension}.
Both objects were defined with the aim of generalizings results from
$3$-dimensional contact topology that hold for the $2$-sphere with
standard foliation and the overtwisted disk respectively
\cite{BedfordGaveau, Gromov_HolCurves, Eliashberg_filling,
  HoferWeinstein}.

\begin{definition}
  Let $N$ be a closed manifold.  An \defin{open book} on $N$ is a
  pair~$(B, \vartheta)$ where:
  \begin{itemize}
  \item The \defin{binding}~$B$ is a nonempty codimension~$2$
    submanifold in the interior of $N$ with trivial normal bundle.
  \item $\vartheta \colon N \setminus B \to \SSS^1$ is a fibration,
    which coincides in a neighborhood $B \times \DD^2$ of $B = B
    \times \{0\}$ with the normal angular coordinate.
  \end{itemize}
\end{definition}

\begin{definition}
  If $N$ is a compact manifold with nonempty boundary, then a
  \defin{relative open book} on $N$ is a pair~$(B, \vartheta)$ where:
  \begin{itemize}
  \item The \defin{binding}~$B$ is a nonempty codimension~$2$
    submanifold in the interior of $N$ with trivial normal bundle.
  \item $\vartheta \colon N \setminus B \to \SSS^1$ is a fibration
    whose fibers are transverse to $\p N$, and which coincides in a
    neighborhood $B \times \DD^2$ of $B = B \times \{0\}$ with the
    normal angular coordinate.
\end{itemize}
\end{definition}

We are interested in studying contact manifolds with submanifolds with
a Legendrian foliation that either define an open book or a relative
open book.

\begin{definition}
  A closed submanifold~$N$ carrying a Legendrian foliation~$\fF$ in a
  contact manifold~$(M,\xi)$ is a \defin{Legendrian open book}
  (abbreviated \defin{\LOB}), if $N$ admits an open book $(B,
  \vartheta)$, whose fibers are the regular leaves of the Legendrian
  foliation (the binding is the singular set of $\fF$).
\end{definition}

\begin{definition}
  A compact submanifold~$N$ with boundary in a contact
  manifold~$(M,\xi)$ is called a \defin{bordered Legendrian open book}
  (abbreviated \defin{\BLOB}), if $N$ carries a Legendrian
  foliation~$\fF$ and if it has a relative open book $(B, \vartheta)$
  such that:
  \begin{itemize}
  \item [(i)] the regular leaves of $\fF$ lie in the fibers of
    $\theta$,
  \item [(ii)] $\sing(\fF) = \p N \cup B$.
  \end{itemize}
  A contact manifold that contains a \BLOB is called
  \defin{$PS$-overtwisted}. \index{$PS$-overtwisted}
\end{definition}

\begin{example}
  \begin{itemize}
  \item [(i)] Every \LOB in a contact $3$-manifold is diffeomorphic to
    a $2$-sphere with the binding consisting of the north and south
    poles, and the fibers being the longitudes.
    This special type of \LOB{} has been studied extensively and has
    given several important applications, see for example
    \cite{BedfordGaveau, Gromov_HolCurves, Eliashberg_filling,
      HoferWeinstein}.
    It is easy to find such \LOB{}s locally, for example, the unit
    sphere in $\RR^3$ with the standard contact structure $\xi = \ker
    \bigl(dz + x\,dy - y\,dx\bigr)$.
  \item [(ii)] A \BLOB in a $3$-dimensional contact manifold is an
    overtwisted disk (with singular boundary).
  \item [(iii)] In higher dimensions, the plastikstufe had been
    introduced as a filling obstruction
    \cite{NiederkrugerPlastikstufe}, but note that a plastikstufe is
    just a specific \BLOB that is diffeomorphic to $\DD^2\times B$,
    where the fibration is the one of an overtwisted disk (with
    singular boundary) on the $\DD^2$-factor, extended by a product
    with a closed manifold~$B$.
    Topologically a \BLOB might be \emph{much} more general than the
    initial definition of the plastikstufe.
    For example, a plastikstufe in dimension~$5$ is always
    diffeomorphic to a solid torus~$\DD^2 \times \SSS^1$ while a
    $3$-manifold admits a relative open book if and only if its
    boundary is a nonempty union of tori.
  \end{itemize}
\end{example}

The importance of the previous definitions lie in the following two
theorems, which will be proved in Chapter~\ref{chapter: J-holomorphic
  curves}.

\begin{main_theorem}[\cite{NiederkrugerPlastikstufe, WeafFillabilityHigherDimension}]\label{thm: non-fillability}
  Let $(M, \xi)$ be a contact manifold that contains a \BLOB~$N$, then
  $M$ does not admit any semi-positive weak symplectic filling $(W,
  \omega)$ for which $\restricted{\omega}{TN}$ is exact.
\end{main_theorem}

The statement above is a generalization of the analogous statement
found first for the overtwisted disk in \cite{Gromov_HolCurves,
  Eliashberg_filling}.

\begin{remark}
  A \BLOB obstructs always (semi-positive) \emph{strong} symplectic
  filling, because in that case the restriction of $\omega$ to $N$ is
  exact.
\end{remark}

\begin{remark}
  In dimension $4$ and $6$, every symplectic manifold is automatically
  semi-positive.
\end{remark}

\begin{main_theorem}[\cite{NiederkrugerRechtman}]\label{thm: homology
    LOB}
  Let $(M, \xi)$ be a contact manifold of dimension $(2n+1)$ that
  contains a \LOB~$N$.
  If $M$ has a weak symplectic filling $(W,\omega)$ that is
  symplectically aspherical, and for which $\restricted{\omega}{TN}$
  is exact, then it follows that $N$ represents a trivial class in
  $H_{n+1}(W, \ZZ_2)$.
  If the first and second Stiefel-Whitney classes~$w_1(N)$ and
  $w_2(N)$ vanish, then we obtain that $N$ must be a trivial class in
  $H_{n+1}(W, \ZZ)$.
\end{main_theorem}

\begin{remark}
  The methods from \cite{HoferWeinstein} can be generalized for
  Theorem~\ref{thm: non-fillability}, see
  \cite{AlbersHoferWeinsteinPlastikstufe}, and for Theorem~\ref{thm:
    homology LOB}, see \cite{NiederkrugerRechtman}, to find closed
  contractible Reeb orbits.
\end{remark}

\section{Examples of \BLOB{}s}

The most important result of these notes is the construction of
non-fillable manifolds in higher dimensions.
The first such manifolds were obtained by Presas in
\cite{PresasExamplesPlastikstufes}, and modifying his examples it was
soon possible to show that every contact structure can be converted
into one that is $PS$-overtwisted
\cite{vanKoertPSOvertwistedEverywhere}.
This result was reproved and generalized in
\cite{EtnyreGeneralizedLutzTwists}, where it was shown that we may
modify a contact structure into one that is $PS$-overtwisted without
changing the homotopy class of the underlying almost contact
structure.
A very nice explicit construction in dimension~$5$ that is similar to
the $3$-dimensional Lutz twist was given in \cite{Mori_Lutz}.
In \cite{WeafFillabilityHigherDimension} the construction was extended
and produced examples that are not $PS$-overtwisted but share many
properties with $3$-manifold that have positive Giroux torsion.
The following unpublished construction is due to Francisco Presas who
explained it to me during a stay in Madrid.
It is probably the easiest way to produce a closed $PS$-overtwisted
manifolds of arbitrary dimensions.

\begin{theorem}[Fran Presas]\label{thm: fiber sum PS-ot}
  Let $(M_1,\xi_1)$ and $(M_2,\xi_2)$ be contact manifolds of
  dimension~$2n+1$ that both contain a $PS$-overtwisted
  submanifold~$(N, \xi_N)$ of codimension~$2$ with trivial normal
  bundle.
  The \defin{fiber sum} \index{fiber sum} of $M_1$ and $M_2$ along $N$
  is a $PS$-overtwisted $(2n+1)$-manifold.
\end{theorem}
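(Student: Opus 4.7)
The plan is to exhibit a \BLOB in the fiber sum by producing an unperturbed parallel copy of $(N,\xi_N)$ that survives the surgery. The first step is to apply the standard contact neighborhood theorem for codimension~$2$ contact submanifolds with trivial normal bundle to identify a tubular neighborhood of $N$ in each $M_i$ with the model
\begin{equation*}
  \bigl(N \times \DD^2_\epsilon,\, \alpha_N + \tfrac{1}{2}\,r^2\, d\varphi\bigr),
\end{equation*}
where $\alpha_N$ is a contact form for $\xi_N$ and $(r,\varphi)$ are polar coordinates on the normal disk. In this model, every parallel copy $N_z := N \times \{z\}$ (for $z\in \DD^2_\epsilon$) is a contact submanifold whose induced contact structure is $\xi_N$, since $dr$ and $d\varphi$ both vanish on $TN_z$.

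Next, I would recall the construction of the contact fiber sum along $N$: one picks $0<\epsilon_1<\epsilon_2<\epsilon$, removes $N\times \DD^2_{\epsilon_1}$ from each $M_i$, and glues the resulting boundaries via a contactomorphism of the annular regions $N\times (\DD^2_{\epsilon_2}\setminus \DD^2_{\epsilon_1})$ that swaps the two boundary components of the normal annulus (for example an orientation-reversing diffeomorphism of the annulus composed with the identity on $N$, modified so as to match the contact forms). The crucial feature is that this surgery modifies the contact structure only inside $N\times \DD^2_{\epsilon_2}$ and leaves everything outside $N\times \DD^2_{\epsilon_2}$ in $M_i$ untouched.

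With the model in hand, choose any $z_0$ with $\epsilon_2 < |z_0| < \epsilon$. The parallel copy $N_{z_0}=N\times\{z_0\}\subset M_1$ lies entirely in the unchanged region, so it is an honest contact submanifold of the fiber sum $M_1 \conSum_N M_2$, and by the previous paragraph its induced contact structure is exactly $\xi_N$. Because $(N,\xi_N)$ is $PS$-overtwisted, it contains a \BLOB{}~$P\subset N$; the image of $P$ inside $N_{z_0}$ is then a \BLOB{} in the fiber sum, which is therefore $PS$-overtwisted.

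The main obstacle is the verification that the contact fiber sum can be arranged so that it only disturbs the inner core $N\times \DD^2_{\epsilon_2}$, i.e., that one can find a gluing contactomorphism of the annular collar that agrees with the identity near the outer boundary $\{r=\epsilon\}$. This is a standard but slightly delicate point: one writes down an explicit diffeomorphism of $N\times \bigl(\SSS^1\times(\epsilon_1,\epsilon_2)\bigr)$ that interchanges the two ends, pulls back the model contact form, and uses a Gray-type interpolation (or Moser's argument) supported in the collar to deform it back to the standard form near $r=\epsilon$. Once this is done, all the preceding steps go through without further difficulty.
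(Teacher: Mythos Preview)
Your argument has a genuine dimensional gap. If $\dim M_i = 2n+1$, then the codimension-$2$ contact submanifold $N$ has dimension $2n-1 = 2(n-1)+1$, so a \BLOB~$P\subset (N,\xi_N)$ has dimension $(n-1)+1 = n$. But a \BLOB in the $(2n+1)$-dimensional fiber sum must have dimension $n+1$; the Legendrian foliation condition $\dim\fF = \tfrac{1}{2}\dim\xi$ forces this. So the image of $P$ in your parallel copy $N_{z_0}$ is one dimension too small to be a \BLOB in the ambient manifold, and the final step of your argument does not go through. A \BLOB in a contact submanifold is simply not a \BLOB in the ambient contact manifold.

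The paper's proof repairs exactly this defect by using the normal circle direction. After removing $N$ and reparametrizing $s=r^2$, the neighborhood becomes $(-\epsilon,\epsilon)\times\SSS^1\times N$ with contact form $\alpha_N + s\,d\varphi$, and the gluing map is $(s,\varphi,p)\mapsto(-s,-\varphi,p)$. On the slice $\{s=0\}$ the contact form restricts to $\alpha_N$ alone, so if $S\subset N$ is a \BLOB for $\xi_N$ then $\{0\}\times\SSS^1\times S$ has Legendrian foliation $\ker\bigl(\restricted{\alpha_N}{TS}\bigr)$: each leaf is $\SSS^1$ times a leaf of the original Legendrian foliation on $S$, the binding becomes $\SSS^1\times B$, and the boundary becomes $\SSS^1\times\partial S$. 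This has the correct dimension $n+1$ and is the \BLOB in the fiber sum. The extra $\SSS^1$ factor is not decorative; it is precisely what makes the dimensions match.
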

\begin{proof}
  Let $\alpha_N$ be a contact form for $\xi_N$.
  The manifold~$N$ has neighborhoods~$U_1 \subset M_1$ and $U_2
  \subset M_2$ that are contactomorphic to
  \begin{equation*}
    \DD_{\sqrt{\epsilon}}^2 \times N
  \end{equation*}
  with contact structure given as the kernel of the $1$-form $\alpha_N
  + r^2\, d\varphi$ \cite[Theorem~2.5.15]{Geiges_book}.
  We can remove the submanifold~$\{0\}\times N$ in this model, and do
  a reparametrization of the $r$-coordinate by $s=r^2$ to bring the
  neighborhood into the form
  \begin{equation*}
    (0,\epsilon) \times \SSS^1 \times N
  \end{equation*}
  with contact form $\alpha_N + s\, d\varphi$.
  We extend $M_1 \setminus N$ and $M_2\setminus N$ by attaching the
  negative $s$-direction to the model collar, so that we obtain a
  neighborhood
  \begin{equation*}
    \bigl( (-\epsilon,\epsilon) \times \SSS^1 \times N, \quad
    \alpha_N + s\, d\varphi\bigr) \;.
  \end{equation*}
  Denote these extended manifolds by $(\widetilde M_1, \widetilde
  \xi_1)$ and $(\widetilde M_2, \widetilde \xi_2)$, and glue them
  together using the contactomorphism
  \begin{align*}
    (-\epsilon,\epsilon) \times \SSS^1 \times N & \to
    (-\epsilon,\epsilon) \times \SSS^1 \times N \\
    (s, \varphi, p) &\mapsto (-s, -\varphi, p) \;.
  \end{align*}
  We call the contact manifold $(M', \xi')$ that we have obtained this
  way the \defin{fiber sum} of $M_1$ and $M_2$ along $N$.
  If $S$ is a \BLOB in $N$, then it is easy to see that $\{0\} \times
  \SSS^1 \times S$ is a \BLOB in the model neighborhood $(-\epsilon,
  \epsilon) \times \SSS^1 \times N$.
\end{proof}

With this proposition, we can now construct non-fillable contact
manifolds of arbitrary dimension.
Every oriented $3$-manifold admits an overtwisted contact structure in
every homotopy class of almost contact structures.
Let $(M,\xi)$ be a compact manifold, let $\alpha_M$ be a contact form
for $\xi$.
A fundamental result due to Emmanuel Giroux gives the existence of a
compatible open book decomposition for $M$ \cite{Giroux_talk}.
Using this open book decomposition, it is easy to find functions
$f,g\colon M \to \RR$ such that
\begin{equation*}
  \bigl(M \times \TT^2, \ker (\alpha_M + f\,dx + g\,dy) \bigr)
\end{equation*}
is a contact structure, see \cite{BourgeoisTori}, where $(x,y)$
denotes the coordinates on the $2$-torus.
The fibers $M\times \{z\}$ are contact submanifold with trivial normal
bundle, so that in particular if $(M,\xi)$ is $PS$-overtwisted, we can
apply the construction above to glue two copies of $M\times \TT^2$
along a fiber $M\times \{z\}$.
This way, we obtain a $PS$-overtwisted contact structure on $M\times
\Sigma_2$, where $\Sigma_2$ is a genus~$2$ surface.
Using this process inductively, we find closed $PS$-overtwisted
contact manifolds of any dimension $\ge 3$.
Note that in dimension~$5$, we can find more easily examples to which
we can apply Theorem~\ref{thm: fiber sum PS-ot}, so that it is not
necessary to rely on \cite{BourgeoisTori}.
Let $(M,\xi)$ be an overtwisted $3$-manifold with contact
form~$\alpha$.
After normalizing $\alpha$ with respect to a Riemannian metric, it
describes a section
\begin{equation*}
  \sigma_\alpha\colon M \to \SSS(T^*M)
\end{equation*}
in the unit cotangent bundle.
It satisfies the fundamental relation~$\sigma_\alpha^* \lcan =
\alpha$, hence it gives a contact embedding of $(M, \xi)$ into
$\bigl(\SSS(T^*M), \ker \lcan\bigr)$.
For trivial normal bundle, this allows us to glue with
Theorem~\ref{thm: fiber sum PS-ot} two copies together and obtain a
$PS$-overtwisted $5$-manifold.

\chapter{Behavior of $J$-holomorphic disks imposed by
  convexity}\label{chapter: J-holomorphic curves}

The following section only fixes notation, and explains some
well-known facts about $J$-convexity.
With some basic knowledge on $J$-holomorphic curves, one can safely
skip it and continue directly with Section~\ref{sec: local_model},
which describes the local models around the binding and the boundary
of the \LOB{}s and \BLOB{}s and the behavior of holomorphic disks that
lie nearby.
The next two sections include a description about moduli spaces and
their basic properties, but most results are only explained in an
intuitive way without giving any proofs.
The fifth section deals with the Gromov compactness of the considered
moduli spaces, and the chapter finishes proving the two applications
that relate a \LOB or a \BLOB to the topology of a symplectic filling.

\section{Almost complex structures and maximally foliated
  submanifolds}
\label{section: J-convexity}

\subsection{Preliminaries: $J$-convexity}

\subsubsection{The maximum principle}

One of the basic ingredients in the theory of $J$-holomorphic curves
with boundary is the maximum principle, which we will now briefly
describe in the special case of Riemann surfaces.
We assume in this section that $(\Sigma, j)$ is a Riemann surface that
does not need to be compact and may or may not have boundary.
We define the differential operator $d^j$ that associates to every
smooth function $f\colon \Sigma \to \RR$ a $1$-form given by
\begin{equation*}
  \bigl(d^j f\bigr)(v) := - df (j\, v)
\end{equation*}
for $v\in T\Sigma$.

\begin{definition}
  We say that a function~$f\colon (\Sigma, j) \to \RR$ is
  \begin{itemize}
  \item [(a)] \defin{harmonic} if the $2$-form~$dd^jf$ vanishes
    everywhere, \index{function!harmonic}
  \item [(b)] it is \defin{subharmonic} if the $2$-form~$dd^jf$ is a
    positive volume form with respect to the orientation defined by
    $(v, j\, v)$ for any non-vanishing vector~$v \in T\Sigma$.
  \item [(c)] If $f$ only satisfies
    \begin{equation*}
      dd^jf\bigl(v, j\, v\bigr) \ge 0
    \end{equation*}
    then we call it \defin{weakly subharmonic}.
    \index{function!subharmonic}
  \end{itemize}
\end{definition}

In particular, if we choose a complex chart~$\bigl(U\subset \CC,
\phi\bigr)$ for $\Sigma$ with coordinate $z=x+iy$, we can represent
$f$ by $f_U := f\circ \phi^{-1} \colon U \to \RR$.
The $2$-form~$dd^j f$ simplifies on this chart to $dd^i f_U$, because
$\phi$ is holomorphic with respect to $j$ and $i$, and we can write
$dd^i f_U$ in the form $\bigl(\triangle f_U\bigr)\, dx\wedge dy$,
where the Laplacian is defined as
\begin{equation*}
  \triangle f_U = \frac{\partial^2 f_U}{\partial x^2}
  + \frac{\partial^2 f_U}{\partial y^2} \;.
\end{equation*}
Note that $f_U$ is subharmonic, if and only if $dd^i
f_U(\partial_x, \partial_y) > 0$, that means, $\triangle f_U > 0$.
For strictly subharmonic functions, it is obvious that they may not
have any interior maxima, because the Hessian needs to be negative
definite at any such point.
We really need to consider both weakly subharmonic functions and the
behavior at boundary points.
To prove the maximum principle in this more general setup, we use the
following technical result.

\begin{lemma}\label{lemma: subharmonic boundary derivative}
  Let $f\colon \DD^2 \subset \CC \to \RR$ be a function that is $C^1$
  on the closed unit disk, and both $C^2$ and weakly subharmonic on
  the interior of the disk.
  Assume that $f$ takes its maximum at a boundary point $z_0 \in \p
  \DD^2$ and is everywhere else strictly smaller than $f(z_0)$.
  Choose an arbitrary vector~$X\in T_{z_0}\CC$ at $z_0$ pointing
  transversely out of $\overline{\DD}^2$.
  Then the derivative~$\lie{X} f(z_0)$ in $X$-direction needs to be
  \emph{strictly} positive.
\end{lemma}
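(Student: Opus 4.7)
The plan is to carry out the classical Hopf boundary point argument: build a strictly subharmonic comparison function that vanishes at $z_0$, peel off the degeneracy in the hypothesis by adding a small multiple of it to $f$, and then apply the (easy) maximum principle for \emph{strictly} subharmonic functions. Throughout, I would use a complex chart around $z_0$ so that the statement reduces to a computation with the Euclidean Laplacian $\triangle$, with ``weakly subharmonic'' meaning $\triangle f \geq 0$ and ``strictly subharmonic'' meaning $\triangle f > 0$.

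First I would record the baby maximum principle we actually need: a function $g\in C^2(\mathrm{int}\, \DD^2)\cap C^0(\overline{\DD}^2)$ with $\triangle g > 0$ on the interior cannot attain an interior maximum, since at any such point the Hessian would be negative semidefinite, forcing $\triangle g \le 0$. Hence such a $g$ attains its maximum on the boundary of any compact sub-domain.

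Next, after a rotation I may assume $z_0 = 1$, and I consider the closed disk $\overline{B}_{1/2}(1/2)\subset \overline{\DD}^2$, which is internally tangent to $\p \DD^2$ at $z_0$. On the annulus
\begin{equation*}
A = \bigl\{z\in\CC\bigm|\, 1/4 < |z-1/2| < 1/2\bigr\}
\end{equation*}
I introduce the auxiliary function
\begin{equation*}
h(z) = e^{-\alpha\,|z-1/2|^2} - e^{-\alpha/4}\;,
\end{equation*}
with $\alpha > 0$ to be chosen. A direct computation gives $\triangle h(z) = 4\alpha\,e^{-\alpha\,|z-1/2|^2}\bigl(\alpha\,|z-1/2|^2 - 1\bigr)$, which for $\alpha$ large enough (say $\alpha > 16$) is strictly positive on $A$. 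By construction $h$ vanishes on the outer circle $|z-1/2| = 1/2$ and is strictly positive on the inner circle $|z-1/2| = 1/4$.

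Now I would compare $f$ with $h$. Because $f$ achieves its maximum on $\overline{\DD}^2$ \emph{strictly} at $z_0$, there is a $\delta > 0$ with $f \le f(z_0) - \delta$ on the compact inner circle $|z-1/2| = 1/4$. Choose $\epsilon > 0$ small enough that $\epsilon\cdot \max_A h < \delta$, and set $g := f + \epsilon h$. Then $g$ is strictly subharmonic on $A$; on the inner boundary $g < f(z_0)$; and on the outer boundary $g = f \le f(z_0)$, with equality only at $z_0$. The maximum principle above therefore forces $\max_{\overline{A}} g = f(z_0)$, attained at $z_0$.

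Finally, since $g$ is maximized at the boundary point $z_0$ of $A$ and $X$ points out of $\overline{\DD}^2$ (hence out of $\overline{A}$ through the outer circle), I have $\lie{X} g(z_0) \ge 0$, i.e.
\begin{equation*}
\lie{X} f(z_0) \ge -\epsilon\,\lie{X} h(z_0)\;.
\end{equation*}
A direct calculation of $\nabla h$ at $z_0 = 1$ shows $\lie{X} h(z_0) = -\alpha\,e^{-\alpha/4}\, \lie{X}(\RealPart z)(z_0) < 0$, because $X$ has a strictly positive outward radial component. This yields $\lie{X} f(z_0) > 0$, as required.

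The one step I expect to require the most care is the combined choice of parameters $(\alpha, \epsilon)$ and the verification that $g$ is strictly subharmonic where needed: the weak subharmonicity of $f$ only gives $\triangle f \ge 0$, so the strict inequality $\triangle g > 0$ on $A$ must come entirely from $\epsilon\, \triangle h$, which is why the exponential barrier (rather than, say, a harmonic one) is essential.
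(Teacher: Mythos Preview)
Your proof is correct and follows essentially the same Hopf boundary-point argument as the paper: perturb $f$ by a strictly subharmonic barrier that vanishes at $z_0$ and has strictly negative outward derivative there, then apply the easy maximum principle on an annulus. The only cosmetic differences are that the paper works on the annulus $\{3/4 < |z| < 1\}$ in the original disk with the polynomial barrier $g(r) = r^4 - \tfrac{9}{4}r^2 + \tfrac{5}{4}$, whereas you pass to an internally tangent sub-disk and use the textbook exponential barrier; neither choice affects the logic.
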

\begin{proof}
  We will perturb $f$ to a \emph{strictly} subharmonic function making
  use of the auxiliary function $g\colon \overline{\DD}^2 \to \RR$
  defined by
  \begin{equation*}
    g(r)  =  r^4 - \frac{9}{4}\,r^2  + \frac{5}{4} \;.
  \end{equation*}

  \begin{figure}[htbp]
    \centering
    \includegraphics[height=3.5cm,keepaspectratio]{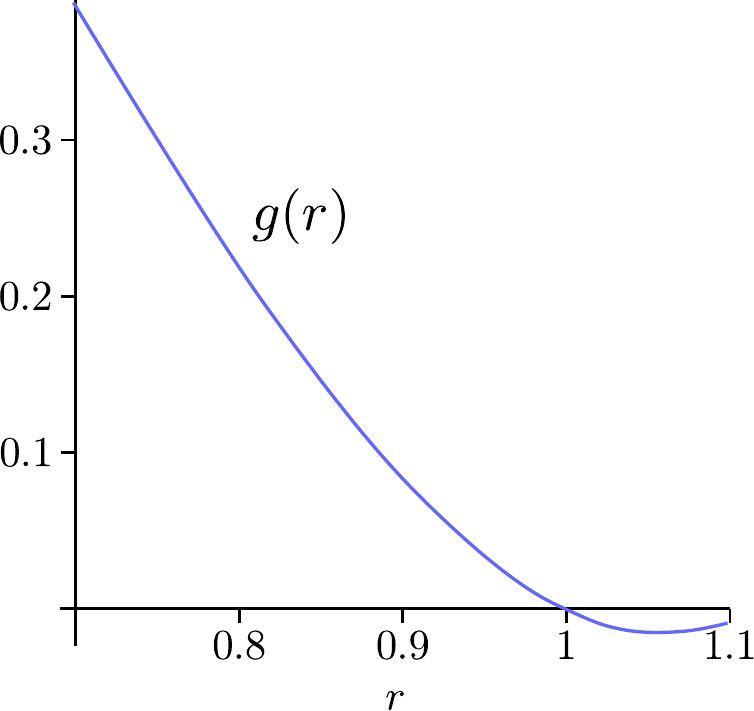}
    \caption{The function~$g(r)$ is subharmonic, vanishes on the
      boundary, and has negative radial derivative.}\label{fig: graph
      used for perturbing weak into strict subharmmonic}
  \end{figure}

  The function~$g$ vanishes along the boundary $\p \DD^2$, and its
  derivative in any direction~$v$ that is positively transverse to the
  boundary~$\p\DD^2$ is strictly negative, because $\partial_\varphi g
  = 0$ and because
  \begin{equation*}
    r\,\partial_r g = \frac{1}{2}\,r^2\, (8r^2 - 9) \;.
  \end{equation*}
  Finally, we also see that $g$ is strictly subharmonic on the open
  annulus~$\AA = \{z \in \CC \, | \, 3/4 < \abs{z} < 1 \}$ as
  \begin{equation*}
    \triangle g = \frac{\partial^2 g}{\partial x^2}
    + \frac{\partial^2 g}{\partial y^2} =   16 r^2 - 9 \;.
  \end{equation*}
  We slightly perturb $f$ by setting $f_\epsilon = f + \epsilon\, g$
  for small $\epsilon > 0$, and we additionally restrict $f_\epsilon$
  to the closure of the annulus~$\AA$.
  Note in particular that $f_\epsilon$ must take its maximum
  on~$\p\AA$, because $f_\epsilon$ is \emph{strictly} subharmonic on
  the interior of $\AA$ so that one of $\frac{\partial^2
    f_\epsilon}{\partial x^2}$ or $\frac{\partial^2
    f_\epsilon}{\partial y^2}$ must be strictly positive.
  This contradicts existence of possible interior maximum points.
  The functions~$f_\epsilon$ are equal to $f$ along the outer boundary
  of $\AA$ so that the maximum of $f_\epsilon$ will either lie in
  $z_0$ or on the inner boundary of~$\AA$.
  The initial function~$f$ is by assumption strictly smaller than
  $f(z_0)$ on the inner boundary of the annulus and by choosing
  $\epsilon$ sufficiently small, it follows that the perturbed
  function~$f_\epsilon$ will still be strictly smaller than
  $f_\epsilon(z_0) = f(z_0)$.
  Thus $z_0$ will also be the maximum of $f_\epsilon$.
  Let $X$ be a vector at $z_0$ that points transversely out of
  $\overline{\DD}^2$.
  The derivative $\lie{X} f_\epsilon$ at $z_0$ cannot be strictly
  negative, because $z_0$ is a maximum, and so since
  \begin{equation*}
    0 \le \lie{X} f_\epsilon = \lie{X} f + \epsilon\, \lie{X} g \;,
  \end{equation*}
  the derivative of $f$ in $X$-direction has to be \emph{strictly}
  positive, yielding the desired result.
\end{proof}

Now we are prepared to state and prove the maximum principle.

\begin{theorem}[Weak maximum principle]\label{thm: maximum principle}
  Let $(\Sigma, j)$ be a connected compact Riemann surface.
  A weakly subharmonic function $f\colon \Sigma \to \RR$ that attains
  its maximum at an interior point $z_0\in \Sigma \setminus \p \Sigma$
  must be constant.
\end{theorem}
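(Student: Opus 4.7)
The plan is a standard clopen-set argument built on top of Lemma~\ref{lemma: subharmonic boundary derivative}. Let $M := f(z_0)$ and set
\[
A := \{z \in \Sigma \setminus \partial \Sigma : f(z) = M\}.
\]
By hypothesis $z_0 \in A$, and continuity makes $A$ relatively closed in the interior. I will prove that $A$ is also relatively open; since the interior of a connected surface (with or without boundary) is connected, this forces $A = \Sigma \setminus \partial\Sigma$, and then continuity extends $f \equiv M$ to all of $\Sigma$.

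For openness I argue by contradiction. If $A$ were a proper nonempty subset of the interior, connectedness would force $A$ to have a nonempty frontier there, giving a point $z_* \in A$ every neighborhood of which meets the complement. The plan is to fix a holomorphic chart around $z_*$ whose image is a Euclidean disk contained in the interior and in which $j$ becomes the standard complex structure; pick a point $q$ in the chart with $f(q) < M$; and then take the supremum $r$ of radii for which the open Euclidean disk of radius $r$ around $q$ stays inside the chart and inside the set $\{f < M\}$. By choice of $q$ close enough to $z_*$, this supremum is finite and positive, and on the boundary circle of the corresponding closed disk $D$ the function $f$ must attain the value $M$ at some point $p$, while $f < M$ throughout the open disk.

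The last step is to apply Lemma~\ref{lemma: subharmonic boundary derivative} to $f$ restricted to $D$. Weak subharmonicity is a conformally invariant condition — in a holomorphic chart it reduces to $\triangle f \ge 0$ — so the hypotheses of the lemma are met after an affine identification of $D$ with $\DD^2$. The lemma then yields $\lie{X} f(p) > 0$ for any vector $X$ at $p$ pointing transversely out of $D$. However, $p$ is an interior maximum of $f$ on $\Sigma$, so $df(p) = 0$ and in particular $\lie{X} f(p) = 0$, a contradiction. This forces $A$ to be open and concludes the proof. The only real analytic input is the previous lemma; the main item requiring care is simply ensuring the maximal "bad" disk remains in the interior, which is arranged by choosing $q$ sufficiently close to the frontier point $z_*$.
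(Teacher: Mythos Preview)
Your argument is essentially the same as the paper's: both run a clopen argument on the maximum set, locate a Euclidean disk in a holomorphic chart whose interior lies in $\{f<M\}$ and whose boundary meets $\{f=M\}$, and then invoke Lemma~\ref{lemma: subharmonic boundary derivative} to produce a strictly positive outward derivative at an interior critical point. One small discrepancy worth tightening: as stated, Lemma~\ref{lemma: subharmonic boundary derivative} assumes the maximum on $\DD^2$ is attained at a \emph{single} boundary point, whereas your maximal disk around $q$ may touch $\{f=M\}$ at several boundary points; the paper handles this by replacing the disk with a half-radius disk centered at the midpoint between $q$ and one chosen touching point, guaranteeing a unique contact. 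Either insert this halving step or remark that the lemma's proof goes through verbatim under the weaker hypothesis that $f<M$ on the open disk.
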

\begin{proof}
  The proof is classical and holds in much greater generality (see for
  example~\cite{GilbargTrudinger}).
  Nonetheless we will explain it in the special case needed by us to
  show that it only uses elementary techniques.
  The strategy is simply to find a closed disk in the interior of the
  Riemann surface with the properties required by Lemma~\ref{lemma:
    subharmonic boundary derivative}.
  Then the function~$f$ increases in radial direction further, so that
  the maximum point was not really a maximum.

  \begin{figure}[htbp]
    \centering
    \includegraphics[height=4.5cm,keepaspectratio]{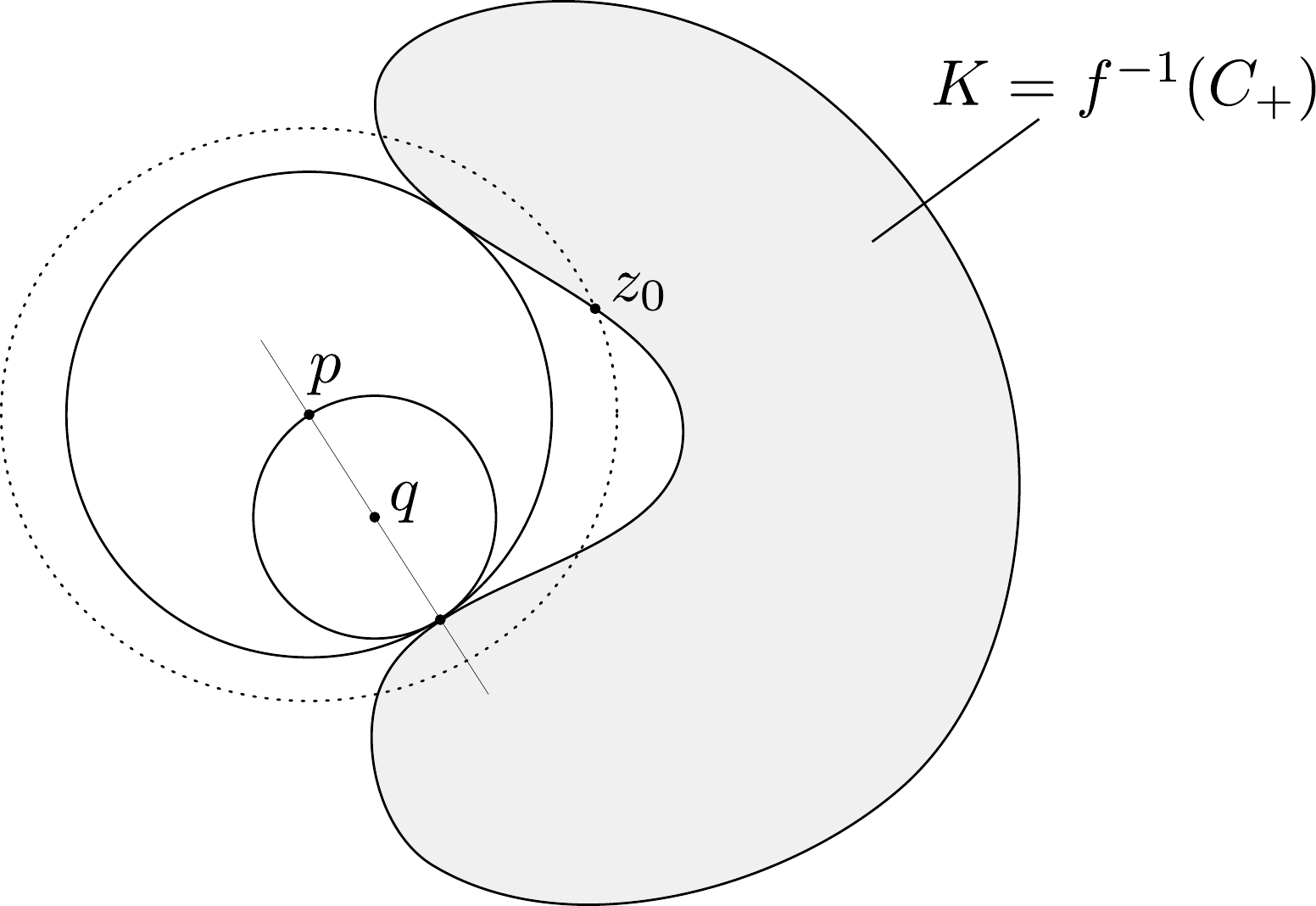}
    \caption{Constructing a disk that has a single maximum on its
      boundary.}\label{fig: disk for max principle}
  \end{figure}

  More precisely, assume $f$ not to be constant, and to have a maximum
  at an interior point $z_+ \in \Sigma \setminus \p\Sigma$ with $C_+
  := f(z_+)$.
  The subset $K := f^{-1}(C_+)\cap \mathring \Sigma$ is closed in
  $\mathring \Sigma$.
  For every point $z \in K$, we find an $R_z > 0$ such that the open
  disk $D_{R_z}(z)$ is contained in some complex chart.
  There must be a point $z_0 \in K$ for which the half sized disk
  $D_{R_{z_0}/2}(z_0)$ intersects $\mathring \Sigma \setminus K$, for
  otherwise $K$ would be open and hence as $\mathring\Sigma$ is
  connected, $K = \mathring\Sigma$.
  Let $p$ be a point in $D_{R_{z_0}/2}(z_0)\setminus K$ (see
  Fig.~\ref{fig: disk for max principle}).
  It lies so close to $z_0$ that the entire closed disk of radius
  $\abs{p-z_0}$ lies in the chart~$U$, and then we can choose first a
  disk~$\overline{\DD_R(p)}$ centered at $p$, where $R$ is the largest
  number for which the \emph{open} disk does not intersect
  $f^{-1}(C_+)$.
  We are interested in finding a closed disk that intersects
  $f^{-1}(C_+)$ at a \emph{single} boundary point: For this let $q$ be
  the mid point between $p$ and one of the boundary points in $\p
  \DD^2_R(p)\cap f^{-1}(C_+)$.
  The disk~$\DD^2_{R/2}(q)$ touches $f^{-1}(C_+)$ at exactly one
  point.
  This smaller disk satisfies the conditions of Lemma~\ref{lemma:
    subharmonic boundary derivative}, and so it follows that the
  derivative of~$f$ at the maximum is strictly positive in radial
  direction.
  But since this point lies in the interior of $\Sigma$, it follows
  that $f$ still increases in that direction and hence this point
  cannot be the maximum.
  Of course, the whole existence of the disk was based on the
  assumption that $f$ was not constant, so we obtain the statement of
  the theorem.
\end{proof}

If $\Sigma$ has boundary, we also get the following refinement.

\begin{theorem}[Boundary point lemma]\label{thm: boundary point lemma}
  Let $f\colon \Sigma \to \RR$ be a weakly subharmonic function on a
  connected compact Riemann surface~$(\Sigma, j)$ with boundary.
  Assume $f$ takes its maximum at a point $z_+ \in \p \Sigma$, then
  $f$ will either be constant or the derivative at $z_+$
  \begin{equation*}
    \lie{X} f(z_+) > 0
  \end{equation*}
  in any outward direction $X \in T_{z_+}\Sigma$ has to be strictly
  positive.
\end{theorem}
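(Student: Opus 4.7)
The plan is to mimic the strategy used in the proof of Theorem~\ref{thm: maximum principle}: I would find a small closed disk inside $\Sigma$ tangent to $\p\Sigma$ at $z_+$ on which $f$ attains its maximum only at $z_+$, and then invoke Lemma~\ref{lemma: subharmonic boundary derivative}. We may assume $f$ is non-constant and write $C_+ := f(z_+)$. By the weak maximum principle, $f$ is strictly less than $C_+$ at every interior point of $\Sigma$, for otherwise $f$ would be constant.

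Next, I would work in a boundary chart around $z_+$ modeling $\Sigma$ locally as the closed upper half disk $\{w\in\DD^2\,|\,\ImaginaryPart w \ge 0\}$ with $z_+\leftrightarrow 0$. For $R$ small, the closed disk $\overline{D_R} := \{|w - iR|\le R\}$ lies in the half disk and touches the real axis only at the origin. Pulling back produces a closed disk $D\subset\Sigma$ with $D\cap\p\Sigma = \{z_+\}$ and $D\setminus\{z_+\}\subset\mathring\Sigma$, so combined with the previous step, $z_+$ is the unique maximum of $f$ on $D$.

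I would then apply Lemma~\ref{lemma: subharmonic boundary derivative} to $D$ (identified biholomorphically with the unit disk, a reparametrization that preserves weak subharmonicity). Letting $\nu\in T_{z_+}\Sigma$ be the outward unit normal of $D$ at $z_+$, the lemma yields $\lie{\nu} f(z_+) > 0$, and by the tangency of $D$ to $\p\Sigma$, this $\nu$ also points out of $\Sigma$. For a general outward direction $X\in T_{z_+}\Sigma$, decompose $X = X^\parallel + X^\perp$ with $X^\parallel$ tangent to $\p\Sigma$ and $X^\perp$ a strictly positive multiple of $\nu$. Since $z_+$ is an interior maximum of the restriction $f|_{\p\Sigma}$ (the boundary being a closed $1$-manifold with no endpoints), one has $\lie{X^\parallel} f(z_+) = 0$, and adding the two contributions gives $\lie{X} f(z_+) > 0$.

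The main obstacle is ensuring that the outward normal $\nu$ of $D$ at $z_+$ coincides with an outward direction of $\Sigma$, and that $f$ is strictly below $C_+$ on all of $D\setminus\{z_+\}$. Both are secured by choosing $D$ tangent to $\p\Sigma$ from the inside, so that $D\setminus\{z_+\}$ sits in $\mathring\Sigma$ where the weak maximum principle forbids $f$ from reaching $C_+$. Everything else reduces to a direct appeal to Lemma~\ref{lemma: subharmonic boundary derivative} together with the routine tangential/normal decomposition of $X$.
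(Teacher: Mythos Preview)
Your proof is correct and follows essentially the same approach as the paper: reduce to the interior strict inequality via Theorem~\ref{thm: maximum principle}, place a small disk in a half-plane chart tangent to $\p\Sigma$ at $z_+$, and invoke Lemma~\ref{lemma: subharmonic boundary derivative}. The only difference is that your tangential/normal decomposition of $X$ is unnecessary, since at the tangency point the outward directions of $D$ and of $\Sigma$ coincide, so Lemma~\ref{lemma: subharmonic boundary derivative} (which already allows an arbitrary transverse outward vector) applies directly to any outward $X$.
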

\begin{proof}
  Denote the maximum $f(z_+)$ by $C_+$.
  By the maximum principle, Theorem~\ref{thm: maximum principle}, we
  know that $f$ will be constant if there is a point $z\in
  \Sigma\setminus \p \Sigma$ for which $f(z) = C_+$.
  We can thus assume that for all $z\notin \p \Sigma$, we have $f <
  C_+$.
  Using a chart $U$ around the point $z_+$, that represents an open
  set in $\HH := \{z\in \CC|\, \ImaginaryPart z \ge 0\}$, such that
  $z_+$ corresponds to the origin, we can easily find a small disk in
  $\HH$ that touches $\p\HH$ only in $0$, and hence allows us to
  directly apply Lemma~\ref{lemma: subharmonic boundary derivative} to
  complete the proof.
\end{proof}

\subsubsection{Plurisubharmonic functions}

We will now explain the connection between the previous section and
contact topology.
Let $(W,J)$ be an almost complex manifold, that means that $J$ is a
section of the endomorphism bundle~$\End(TM)$ with $J^2 = -
\mathbf{1}$.
Define the differential $d^J f$ of a smooth function~$f\colon W\to
\RR$ as before by
\begin{equation*}
  \bigl(d^Jf\bigr)(v) :=  - df(J\cdot v)
\end{equation*}
for any vector $v\in TW$.

\begin{definition}
  We say that a function~$h\colon W \to \RR$ is
  \defin{$J$-plurisubharmonic}, if the $2$-form
  \begin{equation*}
    \omega_h :=  dd^J h
  \end{equation*}
  evaluates positively on $J$-complex lines, that means that
  $\omega_h(v, J v)$ is strictly positive for every non-vanishing
  vector $v \in TW$. \index{function!plurisubharmonic}
  If $\omega_h$ vanishes, then we say that $h$ is
  \defin{$J$-harmonic}. \index{function!harmonic}
\end{definition}

\begin{remark}
  \begin{enumerate}
  \item If $h$ is $J$-plurisubharmonic, then $\omega_h$ is an exact
    symplectic form that tames~$J$.
  \item If $\omega_h$ is only non-negative, then we say that $h$ is
    \defin{weakly $J$-plurisubharmonic}.
    This notion might be for example interesting in the context of
    confoliations.
  \end{enumerate}
\end{remark}

Let $(\Sigma, j)$ be a Riemann surface that does not need to be
compact, and may or may not have boundary.
We say that a smooth map~$u\colon \Sigma \to W$ is
\defin{$J$-holomorphic}, if its differential commutes with the pair
$(j,J)$, that means, at every $z\in \Sigma$ we have
\begin{equation*}
  J\cdot Du = Du\cdot j \;.
\end{equation*}
\index{holomorphic map}
Using the commutation relation, we easily check for every
$J$-holomorphic map~$u$ and every smooth function $f\colon U \to \RR$
the formula
\begin{equation}
  \label{eq: pullback dJ}
  u^* d^J f = - df\cdot J \cdot Du  = - df\cdot Du \cdot j
  = - d(f\circ u) \cdot j =  d^j (f\circ u) = d^ju^*f\;.
\end{equation}

\begin{corollary}
  If $u\colon (\Sigma,j) \to (W,J)$ is $J$-holomorphic and $h\colon W
  \to \RR$ is a $J$-plurisubharmonic function, then $h\circ u$ will be
  weakly subharmonic, because
  \begin{equation*}
    dd^j (h\circ u) = d\, u^* d^Jh = u^* d d^J h
  \end{equation*}
  and because the differential~$Du$ commutes with the complex
  structures, so that
  \begin{equation*}
    dd^j (h\circ u) \bigl(v, j v)
    =  d d^J h (Du\cdot v, J\cdot Du\cdot v) \ge 0
  \end{equation*}
  for every vector~$v \in T\Sigma$.
  The function is strictly positive precisely at points $z\in U$,
  where $Du_z$ does not vanish.
\end{corollary}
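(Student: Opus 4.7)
The plan is to chain together the already-established identity \eqref{eq: pullback dJ} with the pullback-commutes-with-$d$ property, and then invoke the $J$-plurisubharmonicity of $h$ at the pointwise level. The target quantity is $dd^j(h\circ u)$ evaluated on a pair $(v, j\,v)$, and I would like to rewrite it as $u^*\omega_h$ evaluated on the same pair, at which point $J$-holomorphicity of $u$ converts it into $\omega_h(Du\cdot v,\,J\cdot Du\cdot v)$, where non-negativity is immediate.

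First I would apply \eqref{eq: pullback dJ} to conclude that $d^j(h\circ u) = u^*\,d^J h$. Taking exterior derivatives of both sides and using naturality of $d$ with respect to smooth pullback gives
\begin{equation*}
  dd^j(h\circ u) \;=\; d\bigl(u^*\,d^J h\bigr) \;=\; u^*\,dd^J h \;=\; u^*\omega_h .
\end{equation*}
Next, I would evaluate on a pair $(v, j\,v)$ with $v \in T_z\Sigma$. By definition of pullback and then using the Cauchy--Riemann relation $J\cdot Du = Du\cdot j$, one gets
\begin{equation*}
  u^*\omega_h\bigl(v,\,j\,v\bigr) \;=\; \omega_h\bigl(Du\cdot v,\; Du\cdot j\,v\bigr) \;=\; \omega_h\bigl(Du\cdot v,\; J\cdot Du\cdot v\bigr) .
\end{equation*}
Since $h$ is $J$-plurisubharmonic, the right-hand side is non-negative (and in fact strictly positive whenever the vector $Du\cdot v$ is non-zero), which yields weak subharmonicity of $h\circ u$.

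For the final sentence about strict positivity at points where $Du_z \neq 0$, I would observe that the real-linear map $Du_z \colon T_z\Sigma \to T_{u(z)}W$ commutes with the complex structures $j$ and $J$, so its kernel is a $j$-invariant subspace of the two-real-dimensional vector space $T_z\Sigma$. Hence the kernel is either all of $T_z\Sigma$ or trivial; in the latter case $Du_z\cdot v \neq 0$ for every $0\neq v$, and the plurisubharmonicity inequality becomes strict on $(v,j\,v)$.

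The argument is essentially bookkeeping — no real obstacle arises, since the only two ingredients are \eqref{eq: pullback dJ} (already supplied) and the defining inequality of a $J$-plurisubharmonic function. The one subtlety worth stating carefully is the complex-linearity of $Du_z$ which forces its kernel to have even real dimension; this is what lets the pointwise strict inequality propagate to all tangent directions at any immersed point.
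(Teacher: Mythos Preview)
Your proof is correct and follows exactly the approach embedded in the paper's own statement of the corollary: apply \eqref{eq: pullback dJ}, commute $d$ with pullback, and use $J$-holomorphicity together with the defining inequality for plurisubharmonicity. Your added remark that $\ker Du_z$ is $j$-invariant (hence either trivial or everything) makes explicit the justification for the final sentence, which the paper leaves implicit.
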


The maximum principle restricts severely the behavior of holomorphic
maps:

\begin{corollary}\label{cor: holomorphic maps and interior maxima}
  Let $u\colon (\Sigma,j) \to (W,J)$ be a $J$-holomorphic map and
  $h\colon W \to \RR$ be a $J$-plurisubharmonic function.
  If $u$ is not a constant map then $h\circ u\colon \Sigma \to \RR$
  will never take its maximum on the interior of $\Sigma$.
\end{corollary}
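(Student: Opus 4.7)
The plan is to combine the preceding corollary---which gives that $h\circ u$ is weakly subharmonic on $(\Sigma,j)$---with the weak maximum principle (Theorem~\ref{thm: maximum principle}). Suppose, for contradiction, that $h\circ u$ attains its maximum at an interior point $z_0 \in \Sigma\setminus \p\Sigma$, and let $\Sigma_0$ denote the connected component of $\Sigma$ containing $z_0$. First I would choose a small closed coordinate disk $\overline{D}\subset \Sigma_0$ centered at $z_0$; on $\overline D$ the function $h\circ u$ is weakly subharmonic and attains its maximum at the interior point $z_0$, so Theorem~\ref{thm: maximum principle} forces $h\circ u$ to be constant on $\overline D$.

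Next I would revisit the identity used in the proof of the preceding corollary,
\[
dd^j(h\circ u)\bigl(v,\, j v\bigr)
  = dd^J h\bigl(Du\cdot v,\, J\cdot Du\cdot v\bigr),
\]
valid at every point of $\Sigma$. Since $h\circ u$ is constant on $\overline D$ the left-hand side vanishes identically there; but $dd^J h$ is \emph{strictly} positive on every non-zero $J$-complex line of $TW$, so the right-hand side vanishes only if $Du\cdot v = 0$ for all $v$. Hence $Du \equiv 0$ on $\overline D$, i.e.\ $u$ is constant on this disk, equal to $u(z_0)$.

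To propagate this to the whole component, I would run an open-closed argument on the set $A := \{z\in \Sigma_0 \mid u(z) = u(z_0)\}$. By continuity of $u$ the set $A$ is closed in $\Sigma_0$. At any interior point $z_1\in A\cap \mathring\Sigma_0$ the composition $h\circ u$ still attains its global maximum, so the previous two steps, applied at $z_1$ in place of $z_0$, produce an open neighborhood of $z_1$ on which $u\equiv u(z_0)$; thus $A$ is open in $\mathring\Sigma_0$. Combined with closedness this yields $A\supset \mathring\Sigma_0$, and then $A = \Sigma_0$ by taking closures. In particular $u$ is constant on $\Sigma_0$, contradicting the hypothesis that $u$ is not a constant map.

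The main technical point, in my view, is not any one of the estimates but the bookkeeping needed because $\Sigma$ is allowed to be non-compact and possibly have boundary: one cannot invoke Theorem~\ref{thm: maximum principle} on $\Sigma$ itself, only on small compact disks, and then one must chain these local constancy statements together via the open-closed argument above. Once that is set up, the corollary is an essentially immediate consequence of the weak subharmonicity of $h\circ u$ and the strict positivity of $dd^J h$ on $J$-complex lines.
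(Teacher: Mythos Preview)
Your proposal is correct and follows the same core strategy as the paper: the composition $h\circ u$ is weakly subharmonic, the maximum principle forces it to be constant near an interior maximum, and then the strict positivity of $dd^Jh$ on $J$-complex lines kills $Du$.

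The only organizational difference is that the paper applies Theorem~\ref{thm: maximum principle} directly to all of $\Sigma$ to conclude $h\circ u$ is globally constant, and then deduces $Du\equiv 0$ in one stroke; you instead work on small closed disks and propagate constancy of $u$ itself via an open--closed argument. Your extra care is justified given that Theorem~\ref{thm: maximum principle} is stated for \emph{compact} $\Sigma$, while the ambient convention in this section allows $\Sigma$ to be non-compact; the paper's shortcut tacitly uses that the \emph{proof} of Theorem~\ref{thm: maximum principle} never invokes compactness beyond the assumption that the maximum is attained. Either route works and they buy the same conclusion. One small caveat: your final contradiction (``$u$ is constant on $\Sigma_0$, contradicting non-constancy of $u$'') implicitly assumes $\Sigma$ is connected, which is the standing convention here but worth stating if you want the argument to be self-contained.
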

\begin{proof}
  Since $h\circ u$ is weakly subharmonic, it follows immediately from
  the maximum principle (Theorem~\ref{thm: maximum principle}) that
  $h\circ u$ must be constant if it takes its maximum in the interior
  of $\Sigma$, and hence $d(h\circ u) = 0$.
  On the other hand, we know that if there were a point $z\in \Sigma$
  with $D_z u \ne 0$, then $\omega_h(Du\cdot v, Du\cdot j v)$ would
  need to be strictly positive for non-vanishing vectors.
  This is not possible though, because $u^*\omega_h = dd^j(h\circ u) =
  0$.
\end{proof}

\begin{corollary}\label{cor: boundary point lemma holomorphic maps}
  Let $(\Sigma,j)$ be a Riemann surface with boundary, $u\colon
  (\Sigma,j) \to (W,J)$ a $J$-holomorphic map and $h\colon W \to \RR$
  be a $J$-plurisubharmonic function.
  If $h\circ u\colon \Sigma \to \RR$ takes its maximum at $z_0\in \p
  \Sigma$ then it follows either that $d(h\circ u)(v) > 0$ for every
  vector~$v\in T_{z_0}\Sigma$ pointing transversely out of the
  surface, or $u$ will be constant.
\end{corollary}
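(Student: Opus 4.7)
The plan is to combine Theorem~\ref{thm: boundary point lemma} with the computation $u^*\omega_h = dd^j(h\circ u)$ derived in equation~\eqref{eq: pullback dJ}. Set $f := h\circ u\colon \Sigma \to \RR$. By the preceding corollary, $f$ is weakly subharmonic, since
\begin{equation*}
  dd^j f\bigl(v, jv\bigr) = \omega_h\bigl(Du\cdot v, J\cdot Du \cdot v\bigr) \ge 0
\end{equation*}
at every point, with equality precisely where $Du$ vanishes. By hypothesis $f$ attains its maximum at the boundary point $z_0$, so the boundary point lemma applies and yields the dichotomy that either $f$ is constant on $\Sigma$, or $\lie{X} f(z_0) > 0$ for every vector $X \in T_{z_0}\Sigma$ pointing transversely out of the surface.

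In the second case, the desired conclusion $d(h\circ u)(v) > 0$ is exactly the statement. It therefore suffices to treat the first case, where $f \equiv \mathrm{const}$. Then $u^*\omega_h = dd^j f \equiv 0$, and from the displayed inequality above together with strict positivity of $\omega_h(w, Jw)$ for $w \ne 0$, one reads off that $Du \cdot v = 0$ for every $v \in T\Sigma$. Hence $Du \equiv 0$, so $u$ is locally constant. Assuming, as is implicit, that $\Sigma$ is connected, this forces $u$ to be constant on all of $\Sigma$, completing the proof.

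There is essentially no obstacle here: the work has all been done in Corollary~\ref{cor: holomorphic maps and interior maxima} (weak subharmonicity of $h\circ u$) and Theorem~\ref{thm: boundary point lemma} (the boundary dichotomy). The only minor point worth flagging is the passage from $u^*\omega_h = 0$ to $u$ being constant, which uses the taming property inherent in plurisubharmonicity rather than just the weak inequality. If $\Sigma$ is disconnected the conclusion should be read componentwise, i.e.\ $u$ is constant on the connected component of $\Sigma$ containing $z_0$.
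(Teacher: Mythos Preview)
Your proof is correct and follows essentially the same approach as the paper, which simply states that the argument is analogous to that of Corollary~\ref{cor: holomorphic maps and interior maxima} but uses the boundary point lemma (Theorem~\ref{thm: boundary point lemma}) in place of the maximum principle. You have in fact written out more detail than the paper provides, including the explicit passage from $u^*\omega_h = 0$ to $Du \equiv 0$ via the taming property of $\omega_h$.
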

\begin{proof}
  The proof is analogous to the previous one, but uses the boundary
  point lemma (Theorem~\ref{thm: boundary point lemma}) instead of the
  simple maximum principle.
\end{proof}

\begin{remark}
  Note that if $h$ is only \emph{weakly} plurisubharmonic, then we can
  only deduce in the two corollaries above that $u$ has to lie in a
  level set of $h$, and not that $u$ itself must be constant.
\end{remark}

\subsubsection{Contact structures as convex boundaries}

Now we will finally explain the relation between plurisubharmonic
functions and contact manifolds.

\begin{definition}
  Let $(W,J)$ be an almost complex manifold with boundary.
  We say that $W$ has \defin{$J$-convex boundary}, if there exists a
  smooth function $h\colon W \to (-\infty,0]$ with the properties
  \begin{itemize}
  \item $h$ is $J$-plurisubharmonic on a \emph{neighborhood} of $\p
    W$,
  \item $h$ is a regular equation for $\p W$, that means, $0$ is a
    regular value of $h$ and $\p W = h^{-1}(0)$.
  \end{itemize}
\end{definition}

Note that the function~$h$ in the definition takes its maximum on $\p
W$, so that it must be strictly increasing in outward direction.
We will show that the boundary of an almost complex manifold is
$J$-convex if and only if it carries a natural cooriented contact
structure (whose conformal symplectic structure tames $J$).
Remember that we are always assuming our contact manifolds to be
cooriented.
Hence the manifold is oriented, and its contact structure will have a
natural conformal symplectic structure.

\begin{definition}
  Let $M$ be a codimension~$1$ submanifold in an almost complex
  manifold $(W,J)$.
  The \defin{subbundle of complex tangencies} of $M$ is the
  $J$-complex subbundle
  \begin{equation*}
    \xi := TM \cap (J\cdot TM) \;.    
  \end{equation*}
\end{definition}

\begin{proposition}
  Let $(W,J)$ be an almost complex manifold with boundary~$M := \p W$
  and let $\xi$ be the subbundle of complex tangencies of $M$.
  We have the following equivalence:
  \begin{enumerate}
  \item The boundary~$M$ is $J$-convex.
  \item The subbundle~$\xi$ is a cooriented contact structure whose
    natural orientation is compatible with the boundary orientation of
    $M$, and whose natural conformal symplectic structure tames
    $\restricted{J}{\xi}$.
  \end{enumerate}
\end{proposition}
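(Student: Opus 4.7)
The key observation underlying both directions is: for any smooth $h$ with $h|_M = 0$ and $dh|_M \ne 0$, the 1-form $d^J h|_{TM}$ has kernel exactly $\xi$, since $d^J h(v) = -dh(Jv) = 0$ for $v \in TM$ is equivalent to $Jv \in \ker dh|_M = TM$, and combined with $v \in TM$ this says $v \in \xi$. So $d^J h|_{TM}$ is automatically a defining 1-form for $\xi$ in $TM$.

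For the forward direction $(1) \Rightarrow (2)$, I would take the given $J$-plurisubharmonic equation $h$ and declare $\alpha := d^J h|_{TM}$. The observation gives $\ker \alpha = \xi$, so $\alpha$ coorients $\xi$. Differentiating, $d\alpha = \omega_h|_{TM}$, and for $v \in \xi$ one has $Jv \in \xi$ too, so $d\alpha(v, Jv) = \omega_h(v, Jv) > 0$; this gives the taming property and nondegeneracy of $d\alpha|_\xi$ simultaneously, making $\alpha$ a contact form. To check orientations, I would pick a complex basis $e_1, \ldots, e_n$ of $\xi$ at $p \in M$ and $X \in T_p M \setminus \xi$ with $\alpha(X) > 0$. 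Since $dh(JX) = -d^J h(X) = -\alpha(X) < 0$ and $dh$ is positive in the outward direction, $JX$ points inward, so $-JX$ is outward. The basis $(e_1, Je_1, \ldots, e_n, Je_n, X, JX)$ is $J$-positive for $TW|_p$; one sign flip together with the $(2n+1)$ adjacent transpositions moving $-JX$ to the front keeps the basis $J$-positive, so $(e_1, Je_1, \ldots, e_n, Je_n, X)$ represents the boundary orientation of $M$. A direct expansion then yields
\begin{equation*}
  (\alpha \wedge (d\alpha)^n)(e_1, Je_1, \ldots, e_n, Je_n, X) = \alpha(X) \cdot (d\alpha)^n(e_1, Je_1, \ldots, e_n, Je_n) > 0,
\end{equation*}
the last inequality being the standard fact that $\omega^n$ is positive on the complex orientation whenever $\omega$ tames $J$.

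For the converse $(2) \Rightarrow (1)$, I would choose any outward vector field on a neighborhood of $M$, use its flow to identify a collar with $(-\epsilon, 0] \times M$, and set $h_0(t, p) := t$. The observation gives $d^J h_0|_{TM} = g \alpha$ for a nowhere-vanishing $g \in C^\infty(M)$, and the orientation computation above, run in reverse, forces $g > 0$ using the compatibility of $\alpha \wedge (d\alpha)^n$ with the boundary orientation. Along $\xi$ this already yields $\omega_{h_0}(v, Jv) = g \cdot d\alpha(v, Jv) > 0$, but transversely to $\xi$ the form $\omega_{h_0}$ need not be positive. I would then replace $h_0$ by $h := f \circ h_0$ for a convex $f \colon (-\infty, 0] \to (-\infty, 0]$ with $f(0) = 0$ and $f'(0) > 0$, giving
\begin{equation*}
  dd^J h = f''(h_0)\, dh_0 \wedge d^J h_0 + f'(h_0)\, dd^J h_0.
\end{equation*}
The elementary identity $(dh_0 \wedge d^J h_0)(v, Jv) = dh_0(v)^2 + dh_0(Jv)^2$ shows the first summand is nonnegative on every $J$-complex line and strictly positive off $\xi$ along $M$; choosing $f''(0)$ sufficiently large (using compactness of a unit sphere bundle on a neighborhood of $M$) then forces $\omega_h(v, Jv) > 0$ everywhere near $M$, making $h$ the desired $J$-plurisubharmonic regular equation.

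The main obstacle is the orientation bookkeeping. The kernel and taming calculations are essentially one-line verifications, and the convexification $h_0 \leadsto f \circ h_0$ is a standard trick; the delicate step is checking that the contact orientation $\alpha \wedge (d\alpha)^n$ matches the boundary orientation without a stray factor of $(-1)^n$, since this is precisely what fixes the sign $g > 0$ in the converse and is the only place where the hypothesis on orientations is genuinely used.
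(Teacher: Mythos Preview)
Your proof is correct and follows the same overall strategy as the paper: in both directions the key is that $d^J h|_{TM}$ is a defining $1$-form for $\xi$, and $(2)\Rightarrow(1)$ is obtained by convexifying a collar coordinate. The execution differs in two places worth noting. For the orientation check in $(1)\Rightarrow(2)$, you do a direct sign count with a complex basis of $\xi$ and the outward vector $-JX$; the paper instead uses the Reeb field~$R_\alpha$, sets $Y = -JR_\alpha$, and compares $\alpha \wedge d\alpha^{n-1}$ with $\iota_Y \omega_h^n$. For $(2)\Rightarrow(1)$, the paper picks the specific function $h = e^{Ct}-1$ and expands $\omega_h(v,Jv)$ explicitly in coordinates $v = aY + bR_\alpha + cZ$, then completes the square to see positivity for large $C$; your identity $(dh_0 \wedge d^J h_0)(v,Jv) = dh_0(v)^2 + dh_0(Jv)^2$ replaces that entire computation by a one-line nonnegativity, and your explicit verification that $g>0$ (by running the orientation argument backwards) makes precise a step the paper leaves implicit when it normalizes $\omega_t(Z,JZ) = 1$. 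Your route is more conceptual; the paper's is fully explicit and avoids the compactness argument you invoke for choosing $f''$ large.
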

\begin{proof}
  To prove the direction ``$(1)\Rightarrow(2)$'', let $h$ be the
  $J$-plurisubharmonic equation of $M$ that exists by assumption.
  A straight forward calculation shows that the kernel of the $1$-form
  $\alpha := \restricted{d^Jh}{TM}$ is precisely $\xi$, and in
  particular that $\alpha$ does not vanish.
  Furthermore $\restricted{d\alpha}{TM} = \restricted{\omega_h}{TM}$
  is a symplectic structure on $\xi$ that tames $\restricted{J}{\xi}$,
  so that $\alpha$ is a contact form.
  To check that $\alpha\wedge d\alpha^{n-1}$ is a positive volume form
  with respect to the boundary orientation induced on $M$ by $(W,J)$,
  let $R_\alpha$ be the Reeb field of $\alpha$, and define a vector
  field~$Y = - J\, R_\alpha$.
  The field~$Y$ is positively transverse to $\p W$, because $\lie{Y} h
  = dh(Y) = d^Jh(R_\alpha) = \alpha(R_\alpha) = 1$ is positive.
  Choosing a basis $(v_1,\dotsc, v_{2n-2})$ for $\xi$ at a point $p\in
  M$, we compute
  \begin{equation*}
    \alpha \wedge d\alpha^{n-1} \bigl(R_\alpha, v_1,\dotsc, v_{2n-2}\bigr)
    = d\alpha^{n-1} \bigl(v_1,\dotsc, v_{2n-2}\bigr) 
    = \omega_h^{n-1} \bigl(v_1,\dotsc, v_{2n-2}\bigr)\;.
  \end{equation*}
  Similarly, we obtain
  \begin{equation*}
    \begin{split}
      \omega_h^n \bigl(Y, R_\alpha, v_1,\dotsc, v_{2n-2}\bigr) & = n\,
      \omega_h(Y, R_\alpha) \cdot \omega_h^{n-1}
      \bigl(v_1,\dotsc, v_{2n-2}\bigr) \\
      &= n\, \omega_h(R_\alpha, J R_\alpha) \cdot \omega_h^{n-1}
      \bigl(v_1,\dotsc, v_{2n-2}\bigr) \;,
    \end{split}
  \end{equation*}
  where we have used that $\omega_h(R_\alpha, v_j) = d\alpha
  (R_\alpha, v_j) = 0$ for all $j\in \{1,\dotsc,n-1\}$.
  The first term~$\omega_h(R_\alpha, J R_\alpha)$ is positive, and
  hence $\alpha\wedge d\alpha^{n-1}$ and $\iota_Y \omega_h^n$ induce
  identical orientations on $M$.
  To prove the direction ``$(2)\Rightarrow(1)$'', choose any collar
  neighborhood~$(-\epsilon,0] \times M$ for the boundary, and let $t$
  be the coordinate on $(-\epsilon,0]$.
  First note that $\alpha = \restricted{d^Jt}{TM}$ is a non-vanishing
  $1$-form with kernel~$\xi$, so in particular it will be contact.
  Let $R_\alpha$ be the Reeb field of $\alpha$, and set $Y := - J\,
  R_\alpha$.
  As before, the field~$Y$ is positively transverse to $M$, because of
  $\lie{Y} t = - dt(J\,R_\alpha) = \alpha(R_\alpha) = 1$.
  Let $C$ be a large constant, whose size will be determined below,
  and set $h(t,p) := e^{Ct} - 1$.
  Clearly, $h$ is a regular equation for $M$, and we claim that for
  sufficiently large $C$, $h$ will be a $J$-plurisubharmonic function.
  Let $v \in T_pW$ be any non-vanishing vector at $p\in M$ and
  represent it as
  \begin{equation*}
    v = a Y + b R_\alpha + c Z\;,
  \end{equation*}
  where $Y$ and $R_\alpha$ were defined above, and $Z \in \xi$ is a
  vector in the contact structure that has been normalized such that
  $d\alpha(Z, JZ) = \omega_t(Z, JZ) = 1$.
  Note that the $1$-form $\alpha_C = \restricted{d^Jh}{TM} = C
  e^{Ct}\, \alpha$ is a contact form that represents the same
  coorientation as $\alpha$.
  We compute $\omega_h = d d^J h = C e^{Ct}\,\bigl(\omega_t +
  C\,dt\wedge d^Jt\bigr)$, which simplifies for $t=0$ further to
  $\omega_h = C\, \bigl(\omega_t + C\,dt\wedge d^Jt\bigr)$ and so we
  have
  \begin{equation*}
    \omega_h(R_\alpha, \cdot) = C \, \bigl(\omega_t(R_\alpha,\cdot)
    - C\,dt \bigr)  \quad\text{ and }\quad
    \omega_h(Y, \cdot) = C \,\bigl(\omega_t(\cdot, J\, R_\alpha)
    + C\, d^Jt \bigr)
  \end{equation*}
  This implies $\omega_h(R_\alpha, Z) = \omega_h(R_\alpha, JZ) = 0$
  for all $Z \in \xi$, and $\omega_h(Y, R_\alpha) = C^2 +
  C\,\omega_t(R_\alpha, J R_\alpha)$ can be made arbitrarily large by
  increasing the size of $C$.
  With these relations we obtain
  \begin{align*}
    \omega_h(v, Jv) &= \omega_h( a Y + b R_\alpha + c Z, a R_\alpha
    - bY + c JZ) \\
    &= \bigl(a^2 + b^2\bigr) \, \omega_h(Y, R_\alpha) + c^2\,
    \omega_h(Z, JZ)  + ac\, \omega_h(Y, JZ) + bc\, \omega_h(Y, Z) \\
    &= \bigl(a^2 + b^2\bigr) \, \bigl(C^2 + O(C)\bigr) + C
    \,\bigl(c^2\,\omega_t(Z, JZ) +
    ac\, \omega_t(Y, JZ) + bc\, \omega_t(Y, Z)\bigr) \\
    \intertext{and setting $A_a = \omega_t(Y, JZ)$ and $A_b =
      \omega_t(Y, Z)$ and using that $\omega_t(Z, JZ) = 1$}
    &= \bigl(a^2 + b^2\bigr) \, \bigl(C^2 + O(C)\bigr) + C\,
    \bigl(c^2 + A_aac + A_bbc\bigr)\\
    &= \bigl(a^2 + b^2\bigr) \, \bigl(C^2 + O(C)\bigr) +
    \frac{C}{2}\,\, \Bigl(\bigl(c + a A_a\bigr)^2 - a^2A_a^2 +
    \bigl(c + b A_b\bigr)^2 - b^2A_b^2\Bigr) \\
    &= a^2 \, \bigl(C^2 + O(C)\bigr) + b^2 \, \bigl(C^2 + O(C)\bigr) +
    \frac{C}{2}\, \bigl((c + aA_a)^2 + (c + b A_b)^2\bigr) \;.
  \end{align*}
  By choosing $C$ large enough, we can ensure that the $a^2$- and
  $b^2$-coefficients are both positive.
  Then it is obvious from the computation above that $\omega_h$ tames
  $J$, and hence $h$ is $J$-plurisubharmonic.
\end{proof}

\subsubsection{Legendrian foliations in convex boundaries}

\begin{definition}
  A \defin{totally real submanifold~$N$} of an almost complex
  manifold~$(W,J)$ is a submanifold of dimension $\dim N =
  \frac{1}{2}\, \dim W$ that is not tangent to any $J$-complex line,
  that means, $TN \cap (J\,TN) = \{0\}$, which is equivalent to
  requiring
  \begin{equation*}
    \restricted{TW}{N} = TN \oplus (J\,TN) \;.
  \end{equation*}
  \index{submanifold!totally real}
\end{definition}

\begin{proposition}
  Let $(W,J)$ be an almost complex manifold with $J$-convex boundary
  $(M, \xi)$.
  Assume $N$ is a submanifold of $M$ for which the complex
  tangencies~$\xi$ induce the Legendrian foliation $\fF = TN \cap
  \xi$.
  Then it is easy to check that $N\setminus \sing(\fF)$ is totally
  real.
\end{proposition}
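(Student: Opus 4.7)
The goal is to verify, at every regular point $p \in N \setminus \sing(\fF)$, the defining equality $T_pN \cap \bigl(J\, T_pN\bigr) = \{0\}$. The plan is to trap any vector in the intersection inside $\fF_p$ and then use the incompatibility of an isotropic half-dimensional subspace with the taming property of $d\alpha$.

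The first step is algebraic. Take $v \in T_pN \cap J\,T_pN$; then $v, Jv \in T_pN \subset T_pM$. Since $Jv \in T_pM$ forces $v \in J T_pM$ (using $J^2 = -\mathbf{1}$), we get $v \in T_pM \cap J T_pM = \xi_p$, and symmetrically $Jv \in \xi_p$. Hence both $v$ and $Jv$ lie in $T_pN \cap \xi_p = \fF_p$, so $\fF_p$ is $J$-invariant at $p$.

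The second step uses the Legendrian condition. By hypothesis $\dim \fF_p = \tfrac{1}{2} \dim \xi_p = n$, and $\fF$ being a Legendrian foliation means $\restricted{d\alpha}{\fF_p} \equiv 0$ for any contact form $\alpha$ for $\xi$ (so $\fF_p$ is Lagrangian in $(\xi_p, d\alpha_p)$). On the other hand, the $J$-convexity of $M$ as proved in the preceding proposition gives a contact form $\alpha = \restricted{d^Jh}{TM}$ whose $d\alpha = \restricted{\omega_h}{\xi}$ tames $\restricted{J}{\xi}$; in particular $d\alpha(w, Jw) > 0$ for every non-zero $w \in \xi_p$.

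The conclusion is now immediate: if $v \ne 0$ were in $T_pN \cap J\, T_pN$, then $v \in \fF_p \subset \xi_p$ with $Jv \in \fF_p$, so by the Lagrangian condition $d\alpha(v, Jv) = 0$, contradicting the taming inequality $d\alpha(v, Jv) > 0$. Hence $v = 0$, and since $\dim N = n+1 = \tfrac{1}{2}\dim W$, the splitting $\restricted{TW}{N} = TN \oplus J\,TN$ holds along $N \setminus \sing(\fF)$, which is exactly totally realness. No step poses a genuine obstacle; the only thing worth being careful about is verifying that the Legendrian half-dimensional condition really does deliver the Lagrangian equation $\restricted{d\alpha}{\fF_p} = 0$ at regular points, which was already recorded in the introductory discussion of Legendrian foliations.
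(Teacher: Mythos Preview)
Your proof is correct and follows essentially the same route as the paper: both show that any $v \in T_pN \cap J\,T_pN$ must land in $\fF_p$, and then derive a contradiction between the vanishing of $d\alpha$ on $\fF_p$ at regular points and the taming condition $d\alpha(v,Jv)>0$. One small remark: the phrase ``so $\fF_p$ is $J$-invariant at $p$'' overstates what you have actually shown (only that this particular $v$ and $Jv$ lie in $\fF_p$), but you never use more than that, so the argument is unaffected.
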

\begin{proof}
  If $X \in TN$ is a non-vanishing vector with $JX$ also in $TN$, then
  in particular
  \begin{equation*}
    X \in TN \cap (JTN) \subset TM \cap (JTM) = \xi \;,
  \end{equation*}
  so that $X$ and $JX$ have to lie in $\fF$.
  The $2$-form $d\alpha$ tames $\restricted{J}{\xi}$ so that
  $d\alpha(X, JX) > 0$, but $\restricted{d\alpha}{\fF}$ vanishes at
  regular points of the foliation, and hence $X$ must be $0$.
\end{proof}

We will next study the restrictions imposed by a Legendrian foliation
on $J$-holomorphic curves.
Let $(\Sigma, j)$ be a compact Riemann surface with boundary, and let
$A$ be a subset of an almost complex manifold~$(W,J)$.
We introduce for $J$-holomorphic maps $u\colon \Sigma \to W$ with
$u(\p\Sigma) \subset A$ the notation
\begin{equation*}
  u\colon (\Sigma, \p\Sigma, j) \to (W, A, J) \;.
\end{equation*}
Note that we are always supposing that $u$ is at least $C^1$ along the
boundary.

\begin{corollary}\label{cor: boundary of curves transverse to
    Legendrian foliation}
  Let $(W,J)$ be an almost complex manifold with convex boundary~$(M,
  \xi)$.
  Let $N \hookrightarrow M$ be a submanifold with an induced
  Legendrian foliation~$\fF$, and let $u$ be a $J$-holomorphic map
  \begin{equation*}
    u\colon (\Sigma, \p\Sigma, j) \to
    (W, N\setminus \sing(\fF), J) \;.
  \end{equation*}
  If there is an interior point $z_0\in \Sigma \setminus \p \Sigma$ at
  which $u$ touches $M$, or if $\p u$ is not positively transverse to
  $\fF$, then $u$ is a constant map.
\end{corollary}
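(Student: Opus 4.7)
The natural approach is to pull back a defining function for the boundary by $u$ and apply the maximum principle plus the boundary point lemma. Let $h\colon W \to (-\infty, 0]$ be the $J$-plurisubharmonic regular equation for $M = \p W$ coming from the assumption that the boundary is $J$-convex. Since $u(\p\Sigma) \subset N \subset M = h^{-1}(0)$, the function $f := h \circ u \colon \Sigma \to \RR$ is nonpositive with $f|_{\p\Sigma} \equiv 0$, and it is weakly subharmonic because $u$ is $J$-holomorphic and $h$ is $J$-plurisubharmonic. In particular the maximum of $f$ equals $0$ and is attained on $\p\Sigma$.

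If some interior point $z_0 \in \Sigma \setminus \p\Sigma$ satisfies $u(z_0) \in M$, then $f(z_0) = 0$ as well, so $f$ attains its maximum in the interior. Corollary~\ref{cor: holomorphic maps and interior maxima} then forces $u$ to be constant, which settles the first alternative.

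For the second alternative, assume $u$ is not constant and let $z \in \p\Sigma$ be arbitrary. Applying Corollary~\ref{cor: boundary point lemma holomorphic maps} (the boundary point lemma for holomorphic maps) to $f$ at $z$, we obtain that for every outward vector $\nu \in T_z \Sigma$,
\begin{equation*}
  d(h\circ u)(\nu) = dh\bigl(Du(\nu)\bigr) > 0 \;.
\end{equation*}
Pick a positively oriented tangent vector $\tau$ to $\p\Sigma$ at $z$; using the standard convention (as on the unit disk) the outward normal is $\nu = -j\tau$. Since $u$ is $J$-holomorphic, $Du(\nu) = -J\,Du(\tau)$, and thus
\begin{equation*}
  0 < dh\bigl(Du(\nu)\bigr) = -dh\bigl(J\,Du(\tau)\bigr) = d^J h\bigl(Du(\tau)\bigr) = \alpha\bigl(Du(\tau)\bigr) \;,
\end{equation*}
where $\alpha = \restricted{d^J h}{TM}$ is the contact form on $M$ induced by $h$.

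Finally, observe that the restriction $\beta := \restricted{\alpha}{TN}$ is precisely a regular equation for the Legendrian foliation~$\fF$ on $N$, with the coorientation determined by $\alpha$. Since $Du(\tau) \in T_{u(z)} N$, the inequality $\beta\bigl(Du(\tau)\bigr) = \alpha\bigl(Du(\tau)\bigr) > 0$ means exactly that $Du(\tau)$ is positively transverse to $\fF$ at $u(z)$. The only nontrivial bookkeeping in this argument is matching the outward conormal convention on $\Sigma$ with the coorientation of $\fF$ coming from $\alpha$; once that is pinned down, the contrapositive of this final step yields the second alternative, and the proof reduces to invoking the two maximum-principle corollaries from the previous subsection.
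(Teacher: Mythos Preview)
Your proof is correct and follows essentially the same approach as the paper: pull back the plurisubharmonic boundary equation~$h$ along~$u$, apply Corollary~\ref{cor: holomorphic maps and interior maxima} for the first alternative, and for the second use Corollary~\ref{cor: boundary point lemma holomorphic maps} together with $J$-holomorphicity to convert the outward derivative of~$h\circ u$ into~$\alpha(Du\cdot\tau)>0$. Your orientation check $\nu=-j\tau$ is exactly the paper's ``$jw$ points inward'' convention, and your explicit observation that $\restricted{\alpha}{TN}$ is a regular equation for~$\fF$ makes the transversality conclusion slightly more transparent than the paper's one-line deduction.
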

\begin{proof}
  Choose a $J$-plurisubharmonic function $h\colon W\to \RR$ that is a
  regular equation for $M$.
  The first implication follows directly from Corollary~\ref{cor:
    holomorphic maps and interior maxima}, because $z_0$ would be an
  interior maximum for $h\circ u$.
  For the second implication note first that $h\circ u$ takes its
  maximum on $\p \Sigma$ so that if $u$ is not constant, we have by
  Corollary~\ref{cor: boundary point lemma holomorphic maps} that the
  derivative $\lie{v} (h\circ u)$ is strictly positive for every point
  $z_1\in \partial \Sigma$ and every vector $v\in T_{z_1}\Sigma$
  pointing out of $\Sigma$.
  Now if $w \in T\Sigma$ is a vector that is tangent to $\p \Sigma$
  such that $j w$ points inward (so that $w$ corresponds to the
  boundary orientation of $\p \Sigma$, because $(-jw, w)$ is a
  positive basis of $T\Sigma$), we obtain
  \begin{equation*}
    \alpha(Du\cdot w) = - dh(J Du\cdot w) = - dh(Du\cdot jw) =
    - d(h\circ u) (jw) > 0 \;.
  \end{equation*}
  The boundary of $\p u$ has thus to be positively transverse to
  $\xi$, and so it is in particular positively transverse to the
  Legendrian foliation~$\fF$.
\end{proof}

Note that the result above applies only for holomorphic maps that are
$C^1$ along the boundary.

\subsection{Preliminaries: $\omega$-convexity}\label{sec:
  omega-convexity}

Above we have explained the notion of $J$-convexity, and the relevant
relationship between contact and almost complex structures.
In this section, we want to discuss the notion of $\omega$-convexity,
that means the relationship between an (almost) symplectic and a
contact structure.
In fact, we are not interested in studying almost complex manifolds
for their own sake, but we would like to use the almost complex
structure to understand instead a symplectic manifold $(W,\omega)$.
As initiated by Gromov, we introduce an auxiliary almost complex
structure to be able to study $J$-holomorphic curves in the hope that
even though the $J$-holomorphic curves depend very strongly on the
almost complex structure chosen, we'll be able to extract interesting
information about the initial symplectic structure.
For this strategy to work, we need the almost complex structure to be
\defin{tamed} by $\omega$, that means, we want
\begin{equation*}
  \omega(X, JX) > 0
\end{equation*}
for every non-vanishing vector $X\in TW$. \index{almost complex
  structure!tamed}
This tameness condition is important, because it allows us to control
the limit behavior of sequences of holomorphic curves (see
Section~\ref{sec: compactness}).
As explained in the previous section, $J$-convexity is a property that
greatly helps us in understanding holomorphic curves in ambient
manifolds that have boundary.
When $(W,\omega)$ is a symplectic manifold with boundary~$M = \p W$,
we would thus like to chose an almost complex structure~$J$ that is
\begin{itemize}
\item tamed by $\omega$, and
\item that makes the boundary $J$-convex.
\end{itemize}
In particular, if such a $J$ exists, we know that the boundary admits
an induced contact structure
\begin{equation*}
  \xi = TM \cap \bigl(J\cdot TM\bigr) \;.
\end{equation*}
From the symplectic or contact topological view point, the opposite
setup would be more natural though: given a symplectic manifold~$(W,
\omega)$ with contact boundary $(M,\xi)$, can we choose an almost
complex structure~$J$ that is tamed by $\omega$, and that makes the
boundary $J$-convex such that $\xi$ is the bundle of $J$-complex
tangencies?
The general answer to that question was given in
\cite{WeafFillabilityHigherDimension}.

\begin{definition}
  Let $(M,\xi)$ be a cooriented contact manifold of dimension~$2n-1$,
  and let $(W,\omega)$ be a symplectic manifold whose boundary is $M$.
  Let $\alpha$ be a positive contact form for $\xi$, and assume that
  the orientation induced by $\alpha \wedge d\alpha^{n-1}$ on $M$
  agrees with the boundary orientation of $(W,\omega)$.
  We call $(W, \omega)$ a \defin{weak symplectic filling} of
  $(M,\xi)$, if
  \begin{equation*}
    \alpha\wedge \bigl(T\,d\alpha + \omega\bigr)^{n-1} > 0
  \end{equation*}
  for every $T\in [0,\infty)$.
  \index{filling!weak symplectic}
\end{definition}

The proofs of the following statements are very lengthy, hence we will
omit the proofs referring instead to the Appendix of
\cite{WeafFillabilityHigherDimension} for more details.

\begin{theorem}
  Let $(M,\xi)$ be a cooriented contact manifold, and let $(W,\omega)$
  be a symplectic manifold with boundary $M = \p W$.
  The following two statements are equivalent
  \begin{itemize}
  \item $(W,\omega)$ is a weak symplectic filling of $(M,\xi)$.
  \item There exists an almost complex structure~$J$ on $W$ that is
    tamed by $\omega$ and that makes $M$ a $J$-convex boundary whose
    $J$-complex tangencies are $\xi$.
  \end{itemize}
  Furthermore the space of all almost complex structures that satisfy
  these conditions is contractible (if non-empty).
\end{theorem}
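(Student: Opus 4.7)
The plan is to handle the two directions separately and then address contractibility. The direction $(2) \Rightarrow (1)$ follows almost immediately from the previous proposition identifying $J$-convex boundaries with cooriented contact structures. If $J$ is $\omega$-tame and $M$ is $J$-convex with complex tangencies $\xi$, that proposition supplies a contact form $\alpha$ for $\xi$ such that $d\alpha|_\xi$ tames $J|_\xi$ with matching orientations. Since $\omega|_\xi$ also tames $J|_\xi$, every combination $T\,d\alpha|_\xi + \omega|_\xi$ with $T \in [0,\infty)$ tames $J|_\xi$ and is therefore symplectic on $\xi$, hence $(T\,d\alpha + \omega)^{n-1}|_\xi > 0$. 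Wedging with $\alpha$, which is positive on the Reeb direction transverse to $\xi$ with the correct coorientation, yields the weak filling inequality on $M$.

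The direction $(1) \Rightarrow (2)$ is the substantive part and proceeds in three stages. First, on the contact distribution $\xi$ along $M$, the weak filling condition $\alpha \wedge (T\,d\alpha + \omega)^{n-1} > 0$ forces each form $\sigma_T := T\,d\alpha|_\xi + \omega|_\xi$ with $T\in[0,\infty)$ to be symplectic on $\xi$. All these lie in the open convex cone of symplectic forms on $\xi$ sharing a common orientation, so one can find a single $J_\xi$ on $\xi$ tamed simultaneously by every $\sigma_T$; the space of such common tamings is nonempty and contractible by the standard convexity/averaging argument that produces compatible almost complex structures on a symplectic vector space, applied in a parameterized way.

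Second, choose a collar $(-\epsilon,0]\times M$ of $M$ and extend $J_\xi$ to an almost complex structure $J$ on this collar by declaring $\xi$ to be $J$-invariant with $J|_\xi = J_\xi$, and by setting $J R_\alpha = -Y$ for a carefully chosen transverse outward vector field $Y$. The $\omega$-taming condition in the normal plane reduces to $\omega(Y, R_\alpha) > 0$; since this is an open, nonempty condition, a suitable $Y$ can be chosen on $M$ and extended. Potential cross-terms mixing the $\xi$-direction with the normal plane in $\omega(v, Jv)$ can be killed by a small additional modification of $J R_\alpha$ and $J Y$ by $\xi$-valued corrections, which does not spoil $J$-invariance of $\xi$ nor convexity. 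The proposition already proved then supplies a $J$-plurisubharmonic equation for $M$ and confirms $\xi = TM \cap JTM$. Finally, extend $J$ from the collar to all of $W$ using the standard fact that on any symplectic manifold the space of $\omega$-tame almost complex structures extending a given one on a closed neighborhood is nonempty and contractible.

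Contractibility of the total space of admissible $J$'s follows by tracing contractibility through each stage: the space of common tamings $J_\xi$ in Step~1 is contractible, the set of admissible outward vectors $Y$ in Step~2 is an open contractible subset of sections, and the extensions in Step~3 form the fibers of a fibration with contractible fibers. The main obstacle is Step~2: the $\omega$-taming and $J$-convexity conditions must be reconciled pointwise on $M$, and the handling of the cross-terms between $\xi$ and the normal plane requires the small corrections described above; once that local construction is in place, the global pieces follow from well-established contractibility arguments.
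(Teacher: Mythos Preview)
The paper itself omits this proof, referring to the Appendix of \cite{WeafFillabilityHigherDimension}. Your outline follows the same architecture as that reference: produce a complex structure $J_\xi$ on $\xi$ tamed by every form on the ray $\sigma_T = \restricted{\omega}{\xi} + T\,\restricted{d\alpha}{\xi}$, extend it across a collar, then fill in the interior. The direction $(2)\Rightarrow(1)$ is correct as you wrote it.

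The genuine gap is your Step~1. You claim the $\sigma_T$ ``lie in the open convex cone of symplectic forms on $\xi$ sharing a common orientation'' and that a common $J_\xi$ then exists by ``the standard convexity/averaging argument.'' For $\dim\xi \ge 4$ the set of symplectic forms inducing a fixed orientation is \emph{not} convex: on $\RR^4$, for instance, $\omega \mapsto \omega\wedge\omega$ is a quadratic form of signature $(3,3)$ on $\Lambda^2\RR^4$, and the positive cone of an indefinite quadratic form is never convex. The convexity that does hold runs the other way --- for a \emph{fixed} $J$, the set of $2$-forms taming it is convex --- so once you have a $J$ tamed by both $\restricted{\omega}{\xi}$ and $\restricted{d\alpha}{\xi}$ it is automatically tamed by every $\sigma_T$; but that observation does not produce such a $J$ in the first place. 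The weak filling condition says precisely that the ray $\{\sigma_T\}_{T\ge 0}$ stays nondegenerate, and converting this into the existence and contractibility of a common taming is the substantive linear-algebra lemma proved in \cite{WeafFillabilityHigherDimension}; it is not a routine averaging, and no ``parameterized'' version of the polar-decomposition construction will give it to you for free. Your Steps~2 and~3 are standard once Step~1 is in hand, and your contractibility claim inherits the same gap.
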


A weak filling is a notion that is relatively recent in higher
dimensions; traditionally it is the concept of a strong symplectic
filling that has been studied for a much longer time.
Let $(W,\omega)$ be a symplectic manifold.
A vector field~$X_L$ is called a \defin{Liouville vector field}, if it
satisfies the equation
\begin{equation*}
  \lie{X_L}\omega = \omega\;.
\end{equation*}
\index{vector field!Liouville}

\begin{definition}
  Let $(M,\xi)$ be a cooriented contact manifold, and let $(W,\omega)$
  be a symplectic manifold whose boundary is $M$.
  We call $(W, \omega)$ a \defin{strong symplectic filling} of
  $(M,\xi)$, if there exists a Liouville vector field~$X_L$ on a
  neighborhood of $M$ such that $\lambda :=
  \restricted{\bigl(\iota_{X_L}\omega\bigr)}{TM}$ is a positive
  contact form for $\xi$.
  \index{filling!strong symplectic}
\end{definition}

It is easy to see that a strong filling is in particular a weak
filling.
Note that the symplectic form of a strong filling becomes always exact
when restricted to the boundary, but that this needs not be true for a
weak filling; if it is then it will usually still not be a strong
symplectic filling, but by Corollary~\ref{cor: exact cobordism} it can
deformed into one.

\begin{lemma}
  Let $(W,\omega)$ be a symplectic manifold and let $M$ be a
  hypersurface (possibly a boundary component of $W$) together with a
  non-vanishing $1$-form~$\lambda$.
  Assume that the restriction of $\omega$ to $\ker \lambda$ is
  symplectic.
  Then there is a tubular neighborhood of $M$ in $W$ that is
  symplectomorphic to the model
  \begin{equation*}
    \bigl((-\epsilon, \epsilon) \times M, \,
    d(t\,\lambda) + \restricted{\omega}{TM}\bigr) \;,
  \end{equation*}
  where $t$ is the coordinate on the interval $(-\epsilon, \epsilon)$.
  The $0$-slice $\{0\}\times M$ corresponds in this identification to
  the hypersurface~$M$.
  If $M$ is a boundary component of $W$ then of course we need to
  replace the model by $(-\epsilon, 0] \times M$ or by $[0, \epsilon)
  \times M$ depending on whether $\lambda\wedge \omega^{n-1}$ is
  oriented as the boundary of $(W,\omega)$ or not.
\end{lemma}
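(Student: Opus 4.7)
The plan is a standard relative Moser argument with a preliminary step of constructing the correct transverse vector field. First I would identify a \emph{Reeb-like} vector field $R$ on $M$: since $\omega|_{TM}$ is a $2$-form on an odd-dimensional manifold, its kernel is at least one-dimensional, and since $\omega|_{\ker\lambda}$ is non-degenerate of codimension $1$ in $T_pM$, the kernel is exactly one-dimensional and transverse to $\ker\lambda$. Normalize $R$ by $\lambda(R)=1$. A short dimension count shows $(T_pM)^\omega = \ker(\omega|_{T_pM}) = \mathbb{R}\cdot R \subset T_pM$, so the map $T_pW \to T_p^*M,\; Y \mapsto \iota_Y\omega|_{T_pM}$ is surjective with kernel $\mathbb{R}\cdot R$. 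Hence there exists $Y$ with $\iota_Y\omega|_{TM} = \lambda$, unique modulo $R$. This $Y$ is transverse to $M$ precisely when $\lambda$ does \emph{not} lie in the image of $T_pM \to T_p^*M$, which is the annihilator of $R$; since $\lambda(R)=1\neq0$, transversality is automatic.

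Next I would use the flow $\Phi^t_Y$ of a smooth such $Y$ (chosen smoothly on $M$ and extended to a neighborhood) to define the candidate diffeomorphism
\begin{equation*}
  \Psi_0\colon (-\epsilon,\epsilon)\times M \to W,\qquad (t,p) \mapsto \Phi^t_Y(p),
\end{equation*}
which is a diffeomorphism onto a tubular neighborhood of $M$. Write $\omega_0 := d(t\lambda) + \pi^*(\omega|_{TM})$ with $\pi\colon (-\epsilon,\epsilon)\times M \to M$, and $\omega_1 := \Psi_0^*\omega$. A direct evaluation along $\{0\}\times M$ shows $\omega_0|_{\{0\}\times M} = \omega_1|_{\{0\}\times M}$: the $TM\times TM$ components agree tautologically, while the mixed terms compute as $\omega_0(\partial_t,v) = \lambda(v)$ and $\omega_1(\partial_t,v) = \omega(Y,v) = \lambda(v)$ by construction of $Y$.

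Now I would run the relative Moser trick. Both $\omega_0$ and $\omega_1$ are closed (for $\omega_0$ directly; for $\omega_1$ since $\omega$ is closed and pullback preserves closedness, and $\omega|_{TM}$ is closed as the restriction of a closed form). By the relative Poincar\'e lemma applied fiberwise along the $t$-direction retracting onto $\{0\}\times M$, we obtain a smooth $1$-form $\sigma$ with $d\sigma = \omega_1 - \omega_0$ and $\sigma \equiv 0$ along $\{0\}\times M$. Setting $\omega_s := (1-s)\omega_0 + s\omega_1$, the family is symplectic on a small enough tubular neighborhood (since $\omega_0$ is so near $M$ and the property is open), so we can solve $\iota_{X_s}\omega_s = -\sigma$ for a time-dependent vector field $X_s$ vanishing along $\{0\}\times M$. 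The flow $\phi_s$ of $X_s$ is defined on a perhaps smaller neighborhood for all $s \in [0,1]$ and fixes $\{0\}\times M$ pointwise, and the usual computation
\begin{equation*}
  \tfrac{d}{ds}\phi_s^*\omega_s = \phi_s^*\bigl(\lie{X_s}\omega_s + \dot\omega_s\bigr) = \phi_s^*\bigl(d\iota_{X_s}\omega_s + d\sigma\bigr) = 0
\end{equation*}
yields $\phi_1^*\omega_1 = \omega_0$. The composite $\Psi := \Psi_0\circ \phi_1$ is the desired symplectomorphism, and in the boundary case one simply restricts to the appropriate half-interval, the side being determined by the orientation condition on $\lambda\wedge \omega^{n-1}$.

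The main obstacle is the first step: verifying that the pointwise algebraic equation $\iota_Y\omega|_{TM}=\lambda$ admits a solution transverse to $M$. Once the linear-algebra ingredients are in place (identification of $R$ as the kernel of $\omega|_{TM}$, the annihilator description of the image of $TM \to T^*M$, and the use of $\lambda(R)=1$), everything else is the standard machinery, and the smoothness of $Y$ in $p$ is automatic from the uniqueness modulo $R$ together with a smooth choice of transverse direction.
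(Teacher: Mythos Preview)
Your proof is correct and follows the standard route: identify the characteristic line field $\mathbb{R}\cdot R = (T_pM)^\omega$, solve the linear problem $\iota_Y\omega|_{TM}=\lambda$ for a transverse $Y$, flow along $Y$ to get a first tubular model, and then correct with relative Moser. The linear algebra in your first paragraph is right, including the key observation that $\lambda(R)=1$ forces $\lambda$ out of the annihilator of $R$ and hence $Y\notin T_pM$.

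Note that the paper does not actually give its own proof here: it simply cites \cite[Lemma~2.6]{WeafFillabilityHigherDimension}. Your argument is exactly the kind of proof one expects to find there, so there is nothing to compare against in the present text. One minor stylistic point: rather than appealing to ``uniqueness modulo $R$ together with a smooth choice of transverse direction'' for the smoothness of $Y$, it is cleaner to fix once and for all a smooth transverse field $Y_0$, observe that $\omega(Y_0,R)\neq 0$ (else $R$ would lie in the kernel of $\omega$ on all of $TW$), and then solve $Y = fY_0 + V$ with $V\in\ker\lambda$ by first determining $f = 1/\omega(Y_0,R)$ from the $R$-component and then $V$ from the remaining $\ker\lambda$-components. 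This makes the smooth dependence manifest.
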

For the proof see \cite[Lemma~2.6]{WeafFillabilityHigherDimension}.

\begin{proposition}\label{prop: cohomolog collar}
  Let $(W,\omega)$ be a weak filling of a contact manifold $(M,\xi)$,
  and let $\Omega$ be a $2$-form on $M$ that is cohomologous to
  $\restricted{\omega}{TM}$.
  Choose a positive contact form~$\alpha$ for $(M,\xi)$.
  Then if we allow $C >0$ to be sufficiently large, we can attach a
  collar $[0,C]\times M$ to $W$ with a symplectic form $\omega_C$ that
  agrees close to $\{C\}\times M$ with $d\bigl(t \alpha\bigr) +
  \Omega$, and such that the new manifold is a weak filling of
  $\bigl(\{t_0\} \times M, \xi\bigr)$ for every $t_0 \in [0, C]$.
\end{proposition}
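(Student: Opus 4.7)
The plan is to invoke the preceding lemma to identify a one-sided collar of $\p W$ with the model $\bigl((-\epsilon, 0]\times M,\, d(t\alpha) + \restricted{\omega}{TM}\bigr)$---legitimate because the weak filling inequality at $T=0$ already says that $\restricted{\omega}{\xi}$ is symplectic---and then to attach an extra collar $[0,C]\times M$ carrying an interpolated closed $2$-form. The cohomology hypothesis lets me pick a $1$-form $\eta$ on $M$ with $d\eta = \Omega - \restricted{\omega}{TM}$. First I would fix a cutoff $\chi\colon[0,C]\to[0,1]$ that vanishes on $[0,C/2]$, equals $1$ on $[3C/4,C]$, and satisfies $0\le\chi'\le 4/C$, and then set
\[
  \omega_C := d(t\alpha) + \restricted{\omega}{TM} + d\bigl(\chi(t)\,\eta\bigr).
\]
This is closed by construction, agrees with the collar model on $[0,C/2]\times M$ so that the gluing extends $\omega$ to a smooth closed $2$-form on $W\cup([0,C]\times M)$, and reduces to $d(t\alpha) + \Omega$ on $[3C/4,C]\times M$.

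The next step is to check non-degeneracy of $\omega_C$ together with the slicewise weak filling property. Writing $\beta_{t_0} := t_0\, d\alpha + \restricted{\omega}{TM} + \chi(t_0)\, d\eta$ for the restriction of $\omega_C$ to $T(\{t_0\}\times M)$, both conditions reduce to positivity of expressions of the form $\alpha\wedge\bigl(S\,d\alpha + \restricted{\omega}{TM} + \chi(t_0)\,d\eta\bigr)^{n-1}$ with $S = T + t_0\ge t_0$ and $T\ge 0$. When $\chi(t_0)=0$ this is exactly the weak filling hypothesis of $(W,\omega)$ at parameter $S$. When $\chi(t_0) > 0$, necessarily $t_0\ge C/2$, and the key observation is that for any fixed bounded $2$-form $\zeta$ on $M$ the expansion of $\alpha\wedge(S\,d\alpha + \zeta)^{n-1}$ is dominated by its leading term $S^{n-1}\,\alpha\wedge d\alpha^{n-1}>0$ once $S$ exceeds a threshold $S_0$ depending only on $\alpha$, $\restricted{\omega}{TM}$, and $\Omega$ (not on $C$). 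Choosing $C\ge 2S_0$ forces $S\ge t_0\ge S_0$ throughout, so positivity is automatic.

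The delicate point will be non-degeneracy of $\omega_C$ itself on the transition strip $C/2\le t\le 3C/4$. There the expansion of $\omega_C^n$ produces the principal contribution $\alpha\wedge\beta_t^{n-1}$ (handled by the previous paragraph) together with an extra transverse contribution $\chi'(t)\,\eta\wedge\beta_t^{n-1}$ competing with it. Since $|\chi'|\le 4/C$ and $t\le C$, the transverse term is pointwise of order at most $C^{n-2}$, whereas the principal term is of order at least $(C/2)^{n-1}$, so their ratio is $O(1/C)$. Hence the whole argument comes down to choosing $C$ large enough that simultaneously the dominance threshold $S_0$ is reached on every slice and the $1/C$-smallness of the transverse correction absorbs it; for such $C$, $\omega_C$ is symplectic, the gluing makes $W\cup([0,C]\times M)$ a weak filling of $\bigl(\{t_0\}\times M,\xi\bigr)$ for every $t_0\in[0,C]$, and near the new boundary the form equals $d(t\alpha)+\Omega$ as desired.
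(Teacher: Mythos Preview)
Your argument is correct and is the natural approach: use the preceding collar lemma (legitimate since $\alpha\wedge\omega^{n-1}>0$ forces $\restricted{\omega}{\xi}$ to be non-degenerate), interpolate via a cutoff in the primitive, and then estimate. The paper itself gives no proof of this proposition, referring instead to \cite[Lemma~2.10]{WeafFillabilityHigherDimension}; your construction is essentially what one finds there, so there is nothing further to compare.

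One small remark for completeness: you state the dominance threshold $S_0$ for ``any fixed bounded $2$-form $\zeta$'', but in your application $\zeta = (1-\chi(t_0))\,\restricted{\omega}{TM} + \chi(t_0)\,\Omega$ varies with $t_0$. This causes no trouble because these $\zeta$ range over the compact segment of convex combinations of $\restricted{\omega}{TM}$ and $\Omega$, so a single $S_0$ works uniformly; it is worth saying so explicitly.
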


The proof can be found in
\cite[Lemma~2.10]{WeafFillabilityHigherDimension}.

\begin{corollary}\label{cor: exact cobordism}
  Let $(W,\omega)$ be a weak symplectic filling of $(M,\xi)$ and
  assume that $\omega$ restricted to a neighborhood of $M$ is an exact
  symplectic form.
  Then we may deform $\omega$ on a small neighborhood of $M$ such it
  becomes a strong symplectic filling.
\end{corollary}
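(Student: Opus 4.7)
The plan is to deduce this directly from Proposition \ref{prop: cohomolog collar}. Since $\restricted{\omega}{TM}$ is assumed exact on a neighborhood of $M$, it is cohomologous to the zero $2$-form on $M$, so I would apply Proposition \ref{prop: cohomolog collar} with $\Omega = 0$ and some fixed positive contact form $\alpha$ for $\xi$. Choosing $C>0$ large enough, this produces an enlarged manifold $W' := W \cup_M ([0,C]\times M)$ carrying a symplectic form $\omega_C$ which agrees with $\omega$ on the original $W$ and which, on a neighborhood of the new boundary $\{C\}\times M$, equals $d(t\alpha)$.

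On this outer collar the vector field $X_L := t\,\partial_t$ is Liouville, since a direct computation gives $\lie{X_L} d(t\alpha) = d(t\alpha)$, and its contraction with $\omega_C$ restricts on $\{C\}\times M$ to $C\alpha$, which is a positive contact form for $\xi$. Thus $(W',\omega_C)$ is a strong symplectic filling of $(\{C\}\times M, \xi)$.

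It remains to reinterpret this ``enlargement'' as a deformation of $\omega$ inside the original manifold $W$. For this I would pick a small collar neighborhood $U \cong (-\epsilon, 0] \times M$ of $M$ inside $W$ and observe that $U \cup ([0,C]\times M)$ is diffeomorphic to $U$ itself via a diffeomorphism that is the identity near $\{-\epsilon\}\times M$ and sends the outer boundary $\{C\}\times M$ to $\{0\}\times M = M$. Pulling back $\omega_C$ through this diffeomorphism (and keeping $\omega$ unchanged outside $U$) yields a symplectic form on $W$ that equals $\omega$ away from $M$ and has a Liouville vector field near $M$ inducing the contact form coming from $C\alpha$, so $W$ becomes a strong filling of $(M,\xi)$.

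The only mild subtlety, and what I would be most careful about, is verifying that the pullback under the compressing diffeomorphism glues smoothly with $\omega$ on the inner part of $U$ — but since $\omega_C$ equals $\omega$ on the original part of $W'$ by construction in Proposition \ref{prop: cohomolog collar}, this is automatic. All of the actual symplectic and contact-topological content is already contained in Proposition \ref{prop: cohomolog collar}, so the corollary reduces to identifying the correct cohomology class $\Omega=0$ and noting that $d(t\alpha)$ is manifestly the germ of a strong filling.
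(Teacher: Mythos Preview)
Your proposal is correct and follows essentially the same approach as the paper: apply Proposition~\ref{prop: cohomolog collar} with $\Omega = 0$ to obtain a collar whose outer end is strongly filled, then isotope (compress) the collar back into a neighborhood of $M$ inside $W$. You have simply supplied more detail than the paper's two-sentence proof, in particular the explicit Liouville field $t\,\partial_t$ and the gluing check, both of which are routine.
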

\begin{proof}
  Since $\restricted{\omega}{TM}$ is exact, we can apply the
  proposition above with $\Omega = 0$.
  Afterwards we can isotope the collar back into the neighborhood of the
  boundary of $W$.
\end{proof}

Note that two contact structures that are strongly filled by the same
symplectic manifold are isotopic, while a symplectic manifold may be a
weak filling of two different contact manifolds.
This is true even when the restriction of the symplectic structure to
the boundary is exact, see
\cite[Remark~2.11]{WeafFillabilityHigherDimension}.

\section{Holomorphic curves and Legendrian foliations}\label{sec:
  local_model}

Let $(W,J)$ be an almost complex manifold with $J$-convex
boundary~$(M,\xi)$, and let $N \subset M$ be a submanifold carrying a
Legendrian foliation~$\fF$.
The aim of this section will be to better understand the behavior of
$J$-holomorphic maps
\begin{equation*}
  u\colon (\Sigma, \p \Sigma, j) \to (W, N, J) \;,
\end{equation*}
that lie close to a singular point~$p\in \sing(\fF)$ of the Legendrian
foliation.
For this we will assume that $J$ is of a very specific form in a
neighborhood of the point $p$.

\subsection{Existence of $J$-convex functions close to totally real
  submanifolds}

As a preliminary tool, we will need the following result.

\begin{proposition}\label{prop: totally real and psh functions}
  Let $(W,J)$ be an almost complex structure that contains a closed
  totally real submanifold~$L$.
  Then there exists a smooth function $f\colon W \to [0,\infty)$ with
  $L = f^{-1}(0)$ that is $J$-plurisubharmonic on a neighborhood of
  $L$.
  In particular, it follows that $df_p = 0$ at every point $p\in L$.
\end{proposition}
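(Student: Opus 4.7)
The plan is to build the function $f$ locally around each point of $L$ using an adapted coordinate chart, and then patch the local pieces together with a partition of unity, exploiting the fact that each local piece vanishes to second order along $L$.

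First, I would fix a point $p \in L$ and choose a chart $U$ of $W$ around $p$ together with a linear identification $T_pW \cong \CC^n$ (with $n = \dim L$) such that $J_p$ corresponds to the standard complex structure $J_0$ and $T_pL$ corresponds to $\RR^n \subset \CC^n$. Writing coordinates as $(x_1, \dotsc, x_n, y_1, \dotsc, y_n)$, I would shrink $U$ so that $L \cap U$ is given by $\{y_1 = \dotsb = y_n = 0\}$ (this is possible because $L$ is totally real at $p$, hence $T_pL$ is transverse to the graph of any smooth function from a neighborhood of $0$ in $T_pL$ to $JT_pL$). Set $f_p(x,y) := \abs{y}^2$. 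A direct computation gives $dd^{J_0}f_p = 2\sum dx_k \wedge dy_k$, which is positive on $J_0$-complex lines. Since $J$ agrees with $J_0$ at $p$ and taming of a non-degenerate $2$-form by an almost complex structure is an open condition, $dd^J f_p$ is positive on $J$-complex lines on some neighborhood $V_p \subset U$ of $p$. By construction $f_p \ge 0$ and $f_p^{-1}(0) = L \cap V_p$.

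Next, I would cover $L$ by countably many such neighborhoods $\{V_\alpha\}$ with local functions $\{f_\alpha\}$, choose a subordinate partition of unity $\{\rho_\alpha\}$, and consider
\begin{equation*}
  f_L := \sum_\alpha \rho_\alpha f_\alpha
\end{equation*}
defined on the open neighborhood $V := \bigcup_\alpha V_\alpha$ of $L$. Clearly $f_L \ge 0$ with $f_L^{-1}(0) = L$. The main point is to verify that $dd^J f_L$ is positive on $J$-complex lines at every point of $L$: using the Leibniz rule
\begin{equation*}
  d d^J(\rho_\alpha f_\alpha) = \rho_\alpha\, dd^J f_\alpha
  + d\rho_\alpha \wedge d^J f_\alpha + df_\alpha \wedge d^J \rho_\alpha
  + f_\alpha\, dd^J \rho_\alpha,
\end{equation*}
and observing that at a point $q\in L$ one has $f_\alpha(q) = 0$, $df_\alpha(q) = 0$, and therefore also $d^J f_\alpha(q) = -df_\alpha(q)\circ J = 0$, we conclude that
\begin{equation*}
  dd^J f_L(q) = \sum_\alpha \rho_\alpha(q)\, dd^J f_\alpha(q),
\end{equation*}
which is a convex combination of forms that are positive on $J$-complex lines, hence itself positive on $J$-complex lines. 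By continuity, this positivity extends to some open neighborhood $V' \subset V$ of $L$. As a side benefit, the same Leibniz computation gives $df_L(p) = 0$ for $p \in L$, which is the last claim of the proposition.

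Finally, to produce a globally defined $f$ on $W$, I would pick smaller neighborhoods $U_1 \subset\subset U_2 \subset V'$ of $L$ and a smooth cutoff $\chi\colon W\to[0,1]$ equal to $1$ on $U_1$ and supported in $U_2$, and set $f := \chi f_L + (1-\chi) g$ for some smooth $g\colon W\to [0,\infty)$ with $g > 0$ on $W\setminus L$ and $g > 0$ on the closure of $U_2\setminus U_1$. On $U_1$ one has $f = f_L$, which is plurisubharmonic, and $f = 0$ exactly on $L$; outside $U_1$ the function is strictly positive. The only subtle point in the construction is the patching computation above, but it is a genuine consequence of $L$ being a critical set of each $f_\alpha$: this second-order vanishing is what ensures that the bump-function cross-terms drop out at points of $L$, so that local plurisubharmonicity survives the partition-of-unity gluing.
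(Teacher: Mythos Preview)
Your proof is correct and follows essentially the same approach as the paper: build local distance-squared functions $f_\alpha = \abs{y}^2$ in adapted charts, glue via a partition of unity, and observe that the cross terms in $dd^J(\rho_\alpha f_\alpha)$ vanish along $L$ because each $f_\alpha$ vanishes to second order there. The only cosmetic difference is that the paper normalizes $J$ along all of $L\cap U$ (via an exponential-map construction) rather than at the single center point, and then uses finitely many charts by compactness, but neither change affects the substance of the argument.
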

\begin{proof}
  We will first show that we find around every point $p\in L$ a chart
  $U$ with coordinates $\{(x_1,\dotsc,x_n; y_1,\dotsc,y_n)\} \subset
  \RR^{2n}$ such that $L\cap U = \{y_1 = \dotsb = y_n = 0\}$ and
  \begin{equation*}
    \restricted{J\,\frac{\partial}{\partial x_j}}{L\cap U}
    = \restricted{\frac{\partial}{\partial y_j}}{L\cap U} \;.
  \end{equation*}
  For this, start by choosing coordinates $\{(x_1,\dotsc,x_n)\}\subset
  \RR^n$ for the submanifold~$L$ around the point~$p$, and consider
  the associated vector fields
  \begin{equation*}
    Y_1 = J\, \frac{\partial}{\partial x_1}, \dotsc,
    Y_n = J\, \frac{\partial}{\partial x_n}
  \end{equation*}
  along $L$.
  These vector fields are everywhere linearly independent and
  transverse to $L$, hence, we can define a smooth map from from a
  small ball around $0$ in $\RR^{2n} = \{(x_1,\dotsc,x_n;
  y_1,\dotsc,y_n)\}$ to $W$ by
  \begin{equation*}
    y_1\,Y_1(x_1,\dotsc,x_n) + \dotsm +  y_n\,Y_1(x_1,\dotsc,x_n)
    \mapsto \exp \bigl(y_1\,Y_1 + \dotsm +  y_n\,Y_1\bigr) \;,
  \end{equation*}
  where $\exp$ is the exponential map for an arbitrary Riemannian
  metric on $W$.
  If the ball is chosen sufficiently small, the map will be a chart
  with the desired properties.
  For such a chart~$U$, we will choose a function
  \begin{equation*}
    f_U\colon U \to [0,\infty), \, (x_1,\dotsc,x_n;y_1,\dotsc,y_n) \mapsto
    \frac{1}{2}\,\bigl(y_1^2 + \dotsm +y_n^2\bigr) \;.
  \end{equation*}
  It is obvious that both the function itself, and its differential
  vanish along $L\cap U$.
  Furthermore $f$ is plurisubharmonic close to $L\cap U$, because
  \begin{align*}
    dd^J f_U &= d\bigl(y_1\,d^J y_1 + \dotsm + y_n\,d^J y_n\bigr) \\
    &= dy_1\wedge d^J y_1 + \dotsm + dy_n\wedge d^J y_n + y_1\,dd^J
    y_1 + \dotsm + y_n\,dd^J y_n
    \intertext{simplifies at $L\cap U$ to}
    \restricted{dd^J f_U}{L\cap U} &= dx_1\wedge dy_1 + \dotsm +
    dx_n\wedge dy_n \;,
  \end{align*}
  where we have used that all $y_j$ vanish, and that
  $J\,\frac{\partial}{\partial x_j} = \frac{\partial}{\partial y_j}$
  and $J\,\frac{\partial}{\partial y_j} = J^2\,
  \frac{\partial}{\partial x_j} = - \frac{\partial}{\partial x_j}$.
  It is easy to check that this $2$-form evaluates positively on
  complex lines along $L\cap U$, and hence also in a small
  neighborhood of $p$.
  Now to obtain a global plurisubharmonic function as stated in the
  proposition, cover $L$ with finitely many charts $U_1, \dotsc, U_N$,
  each with a function $f_1,\dotsc,f_N$ according to the construction
  given above.
  Choose a subordinate partition of unity $\rho_1,\dotsc,\rho_N$, and
  define
  \begin{equation*}
    f = \sum_{j=1}^N \rho_j\cdot f_j \;.
  \end{equation*}
  The function $f$ and its differential $df = \sum_{j=1}^N
  \bigl(\rho_j\, df_j + f_j\, d\rho_j\bigr)$ vanish along $L$ so that
  the only term in
  \begin{equation*}
    \begin{split}
      dd^J f &= d\sum_{j=1}^N \bigl(\rho_j\, d^Jf_j
      + f_j\, d^J\rho_j\bigr) \\
      &= \sum_{j=1}^N \bigl(\rho_j\, dd^Jf_j + d\rho_j\wedge d^Jf_j +
      f_j\, dd^J\rho_j + df_j\wedge d^J\rho_j\bigr)
    \end{split}
  \end{equation*}
  that survives along $L$ is the first one, giving us along $L$
  \begin{equation*}
    dd^J f = \sum_{j=1}^N \rho_j\, dd^Jf_j \;.
  \end{equation*}
  This $2$-form is positive on $J$-complex lines, and hence there is a
  small neighborhood of $L$ on which $f$ is plurisubharmonic.
  Finally, we modify $f$ to be positive outside this small
  neighborhood so that we have $L = f^{-1}(0)$ as required.
\end{proof}

\begin{corollary}\label{coro: holomorphic curves in cotangent}
  Let $(W,J)$ be an almost complex structure that contains a closed
  totally real submanifold~$L$.
  Then we find a small neighborhood~$U$ of $L$ for which every
  $J$-holomorphic map
  \begin{equation*}
    u\colon (\Sigma, \p\Sigma, j) \to \bigl(W, L, J\bigr)
  \end{equation*}
  from a compact Riemann surface needs to be constant if
  $u(\Sigma)\subset U$.
\end{corollary}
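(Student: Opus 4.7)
The plan is to use the $J$-plurisubharmonic function $f\colon W \to [0,\infty)$ produced by Proposition~\ref{prop: totally real and psh functions}: it satisfies $L = f^{-1}(0)$ and is $J$-plurisubharmonic on some open neighborhood~$U$ of $L$, and this $U$ will be the neighborhood asserted in the corollary. For any $J$-holomorphic map $u\colon(\Sigma, \p\Sigma, j) \to (W, L, J)$ with $u(\Sigma)\subset U$, the computation carried out just before Corollary~\ref{cor: holomorphic maps and interior maxima} shows that $f\circ u$ is weakly subharmonic on $\Sigma$, and in fact $dd^j(f\circ u)(v,jv) = \omega_f\bigl(Du\cdot v, J\cdot Du\cdot v\bigr)$ is strictly positive at any point where $Du\ne 0$.

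Next I would run the maximum principle one connected component at a time. Fix such a component~$\Sigma_0$. Since $f\ge 0$ with equality along $L$, the function $f\circ u$ is nonnegative on $\Sigma_0$ and vanishes along $\p\Sigma_0$. The continuous function $f\circ u$ attains its maximum on the compact surface~$\Sigma_0$: if this maximum is attained at an interior point, Theorem~\ref{thm: maximum principle} forces $f\circ u$ to be constant on $\Sigma_0$; if the maximum is attained only on $\p\Sigma_0$ then its value is $0$ and $f\circ u\equiv 0$. In either case $f\circ u$ is constant on $\Sigma_0$, so $dd^j(f\circ u)\equiv 0$. Combined with the strict positivity above, this forces $Du\equiv 0$ on $\Sigma_0$, and hence $u$ is constant on every connected component of $\Sigma$.

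The heart of the argument is already packaged in Proposition~\ref{prop: totally real and psh functions}; once one has a $J$-plurisubharmonic defining function for $L$, everything else is the maximum principle plus the strict plurisubharmonicity at points where $Du\ne 0$. The only mildly delicate case to keep track of is a closed component of $\Sigma$ (one without boundary), where the boundary condition cannot be used to anchor $f\circ u$ at zero; there the constancy of $f\circ u$ by the maximum principle still yields $Du\equiv 0$ directly, without needing to know that $u(\Sigma_0)\subset L$. As an alternative finishing move on components with nonempty boundary, one could first deduce $u(\Sigma_0)\subset L$ from $f\circ u\equiv 0$ and then use the totally real hypothesis via $Du(T_z\Sigma)\subset T_{u(z)}L\cap J\cdot T_{u(z)}L=\{0\}$, but this detour is unnecessary once one exploits strict subharmonicity on the bulk.
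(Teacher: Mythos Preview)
Your proof is correct and follows essentially the same route as the paper: both invoke the $J$-plurisubharmonic function from Proposition~\ref{prop: totally real and psh functions}, apply the maximum principle to $f\circ u$ using that it vanishes on $\p\Sigma$, and conclude $Du\equiv 0$. The only cosmetic difference is the finishing step---the paper uses your ``alternative'' (image in $L$ totally real forces $Du=0$) rather than the strict-subharmonicity argument you lead with---and you are slightly more explicit about handling components and the boundaryless case.
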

\begin{proof}
  Let $f\colon W \to [0,\infty)$ be the function constructed in
  Proposition~\ref{prop: totally real and psh functions}, and let
  $U\subset (W, J)$ be the neighborhood of $L$, where $f$ is
  $J$-plurisubharmonic.
  Because $u(\Sigma) \subset U$, we obtain from Corollary~\ref{cor:
    holomorphic maps and interior maxima} that $f\circ u$ must take
  its maximum on the boundary of $\Sigma$, but because $f\circ u$ is
  zero on all of $\partial \Sigma$, it follows that $f\circ u$ will
  vanish on the whole surface~$\Sigma$.
  The image $u(\Sigma)$ lies then in the totally real submanifold~$L$,
  and this implies that the differential of $u$ vanishes everywhere.
  Hence there is a $\bfQ_0 \in L$ with $u(z) = \bfQ_0$ for all $z\in
  \Sigma$.
\end{proof}

\subsection{$J$-holomorphic curves close to elliptic singularities of
  a Legendrian foliation}

The aim of this section will be to show that for a suitable choice of
an almost complex structure, elliptic singularities give birth to a
family of holomorphic disks, and that apart from these disks and their
branched covers, no other holomorphic disks may get close to the
elliptic singularities.
Before studying the higher dimensional case, we will construct a model
situation for a $4$-dimensional almost complex manifold with convex
boundary.

\subsubsection{Dimension~$4$}

Consider $\CC^2$ with its standard complex structure~$i$.
Then it is easy to check that $h(z_1,z_2) = \frac{1}{2}\,
\bigl(\abs{z_1}^2 + \abs{z_2}^2\bigr)$ is a plurisubharmonic function
whose regular level sets are the concentric spheres around the origin.
We choose the level set $M = h^{-1}(1/2)$, that is, the boundary of
the closed unit ball $W : = h^{-1}\bigl((-\infty,1/2]\bigr)$ that is
$i$-convex and has the induced contact form
\begin{equation*}
  \alpha_0 = \restricted{d^ih}{TM} = x_1\,dy_1 -  y_1\, dx_1
  + x_2\, dy_2 - y_2\, dx_2 \;.
\end{equation*}
We only want to study a neighborhood~$U$ of $(0,1)$ in $W$.
Embed a small disk by the map
\begin{equation*}
  \Phi\colon z \mapsto \bigl(z, \sqrt{1- \abs{z}^2}\bigr)
\end{equation*}
into $M\cap U$, and denote the image of $\Phi$ by $N_0$.
This submanifold is the intersection of $M = \SSS^3$ with a hyperplane
whose $z_2$-coordinate is purely real.
The restriction of $\alpha_0$ to $N_0$ reduces to
\begin{equation}
  \restricted{\alpha_0}{TN_0} = \Phi^*\alpha_0 = x\,dy - y\, dx \;,
  \label{eq: foliation on disk}
\end{equation}
so that the Legendrian foliation has at the origin an elliptic
singularity (of the type described in Section~\ref{sec: codim 2
  sing}).
Let $U$ be the subset
\begin{equation*}
  U = \bigl\{(z_1,z_2)\in \CC^2\bigm|\, \RealPart (z_2) > 1 - \delta\bigr\}
  \cap h^{-1}\bigl((-\infty, 1/2]\bigr)
\end{equation*}
for small $\delta > 0$, that means, we take the unit ball and cut off
all points under a certain $x_2$-height.
The following propositions explain that there is essentially a unique
holomorphic disk with boundary in $N_0$ passing through a given point
$(z_1,z_2)\in N_0 \cap U$.
All other holomorphic curves with the same boundary condition will
either be constant or will be (branched) covers of that disk.

\begin{proposition}\label{prop: representative for Bishop disk}
  Denote the intersection of $U$ with the complex plane $\CC\times
  \{x\}$ for $x \in (1 - \delta,1)$ by $L_x$.
  For every $x_2 \in (1 - \delta,1)$, there exists a unique
  \emph{injective} holomorphic map
  \begin{equation*}
    u_{x_2}\colon (\DD^2, \p\DD^2) \to \bigl(L_{x_2}, \p L_{x_2}\bigr)
  \end{equation*}
  that satisfies $u_{x_2}(0) = (0,x_2)$ and $u_{x_2}(1) \in
  \{(x_1,x_2) \in U |\, x_1 > 0\}$.
\end{proposition}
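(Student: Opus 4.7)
The plan is to identify $L_{x_2}$ concretely, write down the obvious holomorphic parametrization for existence, and reduce uniqueness to the classical classification of proper holomorphic self-maps of the disk.

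First, since $W = h^{-1}\bigl((-\infty, 1/2]\bigr)$ with $h(z_1, z_2) = \frac{1}{2}\bigl(|z_1|^2 + |z_2|^2\bigr)$, the slice $L_{x_2}$ is precisely the closed Euclidean disk of radius $r := \sqrt{1 - x_2^2}$ in the affine complex line $\CC \times \{x_2\}$, centered at $(0, x_2)$. Its boundary circle $\{|z_1| = r\} \times \{x_2\}$ coincides with $\Phi\bigl(\{z \in \CC : |z| = r\}\bigr)$ and so lies in $N_0$. For existence I would take the obvious biholomorphism
\begin{equation*}
  u_{x_2}(w) := (r\,w,\, x_2) \;,
\end{equation*}
which is injective holomorphic, carries $\p\DD^2$ onto $\p L_{x_2}$, sends $0$ to $(0, x_2)$, and sends $1$ to $(r, x_2)$ with $r > 0$, satisfying the normalization.

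For uniqueness I would compose with the biholomorphism $\psi\colon L_{x_2} \to \DD^2$, $(z_1, x_2) \mapsto z_1/r$. Given any candidate $u$ satisfying the hypotheses, $\phi := \psi \circ u\colon \DD^2 \to \DD^2$ is holomorphic, injective, $C^1$ up to the boundary (by the standing regularity assumption on holomorphic maps of pairs recorded just before Corollary~\ref{cor: boundary of curves transverse to Legendrian foliation}), and sends $\p\DD^2$ into $\p\DD^2$. Such a map is proper, hence a finite Blaschke product; injectivity forces the degree to be one, so $\phi \in \Aut(\DD^2)$. The condition $\phi(0) = 0$ then constrains $\phi$ to be a rotation $w \mapsto e^{i\theta}\,w$, and the normalization $u(1) \in \{(x_1, x_2) \in U : x_1 > 0\}$, combined with $u(1) \in \p L_{x_2}$ (which forces the first coordinate of $u(1)$ to have modulus $r$), pins $u(1) = (r, x_2)$ and therefore $e^{i\theta} = 1$. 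Hence $\phi = \id$ and $u = u_{x_2}$.

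The main obstacle in a formal write-up is justifying the Blaschke-product classification of $\phi$: this rests on the boundary regularity, which lets one either invoke directly the standard theorem identifying continuous proper holomorphic self-maps of $\DD^2$ with finite Blaschke products, or apply Schwarz reflection to extend $\phi$ holomorphically across $\p\DD^2$ and then finish using the classical Schwarz lemma together with the two normalizations at $0$ and $1$. The rest of the argument is bookkeeping in the chart provided by $\psi$.
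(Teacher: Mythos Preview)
Your proof is correct and follows the same overall strategy as the paper: write down the explicit linear parametrization, then show that any competitor $u$ composed with the inverse of the explicit map is an automorphism of $\DD^2$ fixing $0$ and $1$, hence the identity.

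The only real difference is in how you establish that the composition lies in $\Aut(\DD^2)$. You invoke the Blaschke-product classification of proper holomorphic self-maps of the disk (or equivalently Schwarz reflection plus the Schwarz lemma). The paper instead argues more directly from tools already developed in the text: it uses Corollary~\ref{cor: boundary of curves transverse to Legendrian foliation} to see that the boundary restriction is an immersion, hence (with injectivity) a diffeomorphism onto $\p L_{x_2}$; then a retract argument forces surjectivity onto $L_{x_2}$; and the local normal form $z\mapsto z^k$ together with injectivity excludes critical points, yielding a biholomorphism. Your route is slightly slicker but imports an outside classical theorem; the paper's route is more self-contained and ties back to the $J$-convexity machinery set up earlier. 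Both are perfectly valid.
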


The last two conditions only serve to fix a parametrization of a given
geometric disk.

\begin{proof}
  The desired map~$u_{x_2}$ can be explicitly written down as
  \begin{equation*}
    u_{x_2}(z) = \bigl(C z, x_2\bigr)
  \end{equation*}
  with $C = \sqrt{1-x_2^2}$.
  To prove uniqueness assume that there were a second holomorphic map
  \begin{equation*}
    \tilde u_{x_2}\colon (\DD^2, \p\DD^2) \to
    \bigl(L_{x_2}, \p L_{x_2}\bigr)
  \end{equation*}
  with the required properties.
  It is clear that $L_{x_2} = \{(x+iy,x_2)\in \CC^2 \bigm|\, x^2 + y^2
  \le 1- x_2^2\}$ is a round disk.
  By Corollary~\ref{cor: boundary of curves transverse to Legendrian
    foliation}, the restriction $\restricted{u_{x_2}}{\p \DD^2}$ of
  the map to the boundary has non-vanishing derivative, and it is by
  assumption injective, hence it is a diffeomorphism onto $\p
  L_{x_2}$.
  This proves that $u_{x_2}$ has to be for topological reasons
  surjective on $L_{x_2}$ (otherwise we could construct a retract of
  the disk onto its boundary).
  Note also that the germ of a holomorphic map around the origin in
  $\CC$ is always biholomorphic to $z\mapsto z^k$ for some integer
  $k\in \NN_0$, so that the differential of $u_{x_2}$ may not vanish
  anywhere, because otherwise $u_{x_2}$ could not be injective.
  Together this allows us to define a biholomorphism
  \begin{equation*}
    \varphi := u_{x_2}^{-1} \circ \tilde u_{x_2}\colon
    (\DD^2, \p \DD^2) \to (\DD^2, \p \DD^2)
  \end{equation*}
  with $\varphi(0) = 0$ and $\varphi(1) = 1$, but the only
  automorphism of the disk with these properties is the identity, thus
  showing that $u_{x_2} = \tilde u_{x_2}$.
\end{proof}

\begin{proposition}\label{prop: hol curves in neighborhood elliptic
    sing dim 4}
  Let
  \begin{equation*}
    u\colon (\Sigma, \p\Sigma; j) \to (U, N_0; i)
  \end{equation*}
  be any holomorphic map from a connected compact Riemann
  surface~$(\Sigma,j)$ to $U$ with $u(\p\Sigma) \subset N_0$.
  Either $u$ is constant or its image is one of the slices~$L_{x_2} =
  U\cap \bigl(\CC\times \{x_2\}\bigr)$.
  If $u$ is injective at one of its boundary points, then $\Sigma$
  will be a disk, and after a reparametrization by a Möbius
  transformation, $u$ will be equal to the map~$u_{x_2}$ given in
  Proposition~\ref{prop: representative for Bishop disk}.
\end{proposition}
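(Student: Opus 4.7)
The plan is to exploit the integrability of $i$ on $\CC^2$. The imaginary part $y_2$ of the holomorphic coordinate $z_2$ is harmonic and vanishes along $N_0$, so it slices the ambient neighbourhood into the complex lines $\CC \times \{x_2\}$ and forces $u$ to land in a single such slice; the injectivity hypothesis will then pin $u$ down as the explicit Bishop disk $u_{x_2}$.

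First I would use the maximum principle to localise $u$ in a slice. Since $y_2 = \ImaginaryPart z_2$ is $i$-harmonic on $\CC^2$, the composition $y_2 \circ u$ is harmonic on $\Sigma$, and it vanishes on $\p \Sigma$ because $N_0 = \Phi(\DD^2) \subset \{y_2 = 0\}$. Applying Theorem~\ref{thm: maximum principle} to both $y_2 \circ u$ and $-y_2 \circ u$ yields $y_2 \circ u \equiv 0$, so the holomorphic function $z_2 \circ u$ is real-valued, hence locally constant by the open mapping theorem and, by connectedness of $\Sigma$, equal to a single constant $x_2$. If $x_2 = 1$ then $L_{x_2}$ degenerates to the point $(0,1)$ and $u$ is constant; otherwise $x_2 \in (1-\delta,1)$ and $L_{x_2}$ is a round closed disk in $\CC \times \{x_2\}$. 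A direct check shows $L_{x_2} \cap N_0 = \p L_{x_2}$, so Corollary~\ref{cor: boundary of curves transverse to Legendrian foliation} yields $u(\p\Sigma) \subset \p L_{x_2}$, $u(\mathring\Sigma) \subset \mathring L_{x_2}$, and $\p u$ positively transverse to the radial Legendrian foliation on $L_{x_2}$. This transversality makes the boundary loop wind nontrivially around the centre $(0,x_2)$ of $L_{x_2}$, so $u\colon \Sigma \to L_{x_2}$ has strictly positive degree and therefore surjects onto $L_{x_2}$.

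Next I would unpack the injectivity hypothesis. By Corollary~\ref{cor: boundary of curves transverse to Legendrian foliation} the restriction $u|_{\p\Sigma}$ has nowhere vanishing derivative and takes values in the circle $\p L_{x_2}$, so it restricts to a smooth covering on each boundary component of $\Sigma$. If there were a second boundary component, it too would surject onto $\p L_{x_2}$ and in particular contain a preimage of $u(z_0)$, contradicting $u^{-1}(u(z_0)) = \{z_0\}$; hence $\p\Sigma$ is connected and $u|_{\p\Sigma}$ is a diffeomorphism onto $\p L_{x_2}$. The boundary degree is then $1$, which equals the degree of the holomorphic branched cover $u\colon \Sigma \to L_{x_2}$, and a degree-one holomorphic branched cover must be unbranched and therefore a biholomorphism. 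In particular $\Sigma \cong \DD^2$, and composing $u$ with a biholomorphism $\DD^2 \to \Sigma$ and then with a suitable Möbius automorphism of $\DD^2$ enforcing the normalisation conditions $u_{x_2}(0) = (0,x_2)$ and $u_{x_2}(1) \in \{x_1 > 0\}$ of Proposition~\ref{prop: representative for Bishop disk} identifies $u$ with $u_{x_2}$ as required.

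The main obstacle is the surjectivity/degree claim in the middle step: one must combine Corollary~\ref{cor: boundary of curves transverse to Legendrian foliation} with the positivity of the winding number to rule out $u$ collapsing into a proper closed subset of $L_{x_2}$. Once this is in hand, the rest reduces to the maximum principle, the open mapping theorem, and the standard fact that a degree-one holomorphic map between Riemann surfaces is a biholomorphism.
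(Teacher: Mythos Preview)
Your argument is correct and follows the same overall route as the paper: use harmonicity of $y_2 = \ImaginaryPart z_2$ and the maximum principle to trap $u$ in a single slice $L_{x_2}$, then analyse the resulting map to a disk via degree/covering considerations.  The differences are only in the tools chosen at two intermediate steps.  First, once $y_2\circ u \equiv 0$ you invoke the open mapping theorem to conclude $z_2\circ u$ is constant, whereas the paper uses the Cauchy--Riemann equations directly; both are immediate.  Second, for surjectivity onto $L_{x_2}$ you use the positive winding number coming from Corollary~\ref{cor: boundary of curves transverse to Legendrian foliation}, while the paper instead composes with $u_{x_2}^{-1}$ and, assuming a point is missed, applies the maximum principle to the harmonic function $-\ln\abs{z}$ to force $u$ into $\p\DD^2$.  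Your degree argument is slightly more topological and avoids introducing the auxiliary map $u_{x_2}^{-1}$ at that stage; the paper's $-\ln\abs{z}$ trick stays purely within the maximum-principle framework already set up.  The final ``degree-one branched cover is a biholomorphism'' step you state as a black box is exactly what the paper spells out: a critical point would force a local model $z\mapsto z^k$ with $k\ge 2$, contradicting degree one.
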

\begin{proof}
  Note that we are supposing that $u$ is at least $C^1$ on the
  boundary so that by Corollary~\ref{cor: boundary of curves
    transverse to Legendrian foliation} the map~$u$ will be constant
  if it touches the elliptic singularity in $N$.
  The proof of the proposition will be based on the harmonicity of the
  coordinate functions~$x_1$, $y_1$, $x_2$, and $y_2$.
  Let $f\colon U \to \RR$ be the function $(z_1,z_2) \mapsto y_2 =
  \ImaginaryPart (z_2)$.
  Since $\Sigma$ is a compact domain, the function $f\circ u$ attains
  somewhere on $\Sigma$ its maximum and its minimum, and applying the
  maximum principle, Corollary~\ref{cor: holomorphic maps and interior
    maxima}, to $f\circ u$ itself and also to $-f\circ u$, we obtain
  that both the maximum and the minimum have to lie on $\p \Sigma$.
  But since $u(\p\Sigma) \subset N_0$ has vanishing imaginary
  $z_2$-part, it follows that $f\circ u \equiv 0$ on the whole
  surface.
  Using now the Cauchy-Riemann equations, it immediately follows that
  the real part of the $z_2$-coordinate of $u$ has to be constant
  everywhere.
  We can deduce that the image of $u$ has to lie in one of the slices
  $L_{x_2} = \CC\times \{x_2\}$, and in particular the boundary
  $u(\p\Sigma)$ lies in the circle $\p L_{x_2} = \bigl\{(x+iy, x_2)\in
  \CC^2\bigm|\, x^2 + y^2 = 1- x_2^2 \bigr\}$.
  Assume that $u$ is not constant.
  Since $u$ lies in $L_{x_2}$, we can use the map $u_{x_2}$ from
  Proposition~\ref{prop: representative for Bishop disk}, to define a
  holomorphic map
  \begin{equation*}
    \varphi := u_{x_2}^{-1} \circ u \colon (\Sigma, \p\Sigma) \to
    (\DD^2, \p \DD^2) \;.
  \end{equation*}
  If $u$ were not surjective on $L_{x_2}$, we could suppose (after a
  Möbius transformation on the target space) that the image of
  $\varphi$ does not contain $0$.
  The function $h(z) = - \ln \abs{z}$ on $\DD^2 \setminus \{0\}$ is
  harmonic, because it is locally the real part of a holomorphic
  function, and because $h\circ \varphi$ would have its maximum on the
  interior of $\Sigma$, we obtain that $h\circ \varphi$ is constant,
  so that the image of $\varphi$ lies in $\p \DD^2$.
  The image of a non-constant holomorphic map is open, and hence $u$
  must be constant.
  Assume now that $u$ is injective at one of its boundary points.
  As we have shown in Proposition~\ref{prop: representative for Bishop
    disk} the restriction $\restricted{u}{\p \Sigma}\colon \p\Sigma
  \to \p L_{x_2}$ will be a diffeomorphism for each component of $\p
  \Sigma$ so that $\p\Sigma$ must be connected.
  Furthermore, it follows that $u$ will also be injective on a small
  neighborhood of $\p L_{x_2}$, because if we find two sequences
  $(z_k)_k$ and $(\tilde z_k)_k$ coming arbitrarily close to
  $\p\Sigma$ with $u(z_k) = u(\tilde z_k)$ for every $k$, then after
  assuming that they both converge (reducing if necessary to
  subsequences), we see by continuity that $\lim u(z_k) = \lim
  u(\tilde z_k)$ and $\lim z_k, \lim \tilde z_k \in \p \Sigma$, so
  that we can conclude that $\lim z_k = \lim \tilde z_k$.
  Using that the differential of $u$ in $\lim z_k$ is not singular, we
  obtain that for $k$ sufficiently large, we will always have $z_k =
  \tilde z_k$ showing that $u$ is indeed injective on a small
  neighborhood of $\p\Sigma$.
  Assume $z_0\in \Sigma$ is a point at which the
  differential~$D\varphi$ vanishes.
  Then we know that $\varphi$ can be represented in suitable charts as
  $z\mapsto z^k$ for some $k\in \NN$, but if $k > 1$ this yields a
  contradiction, because we know that $\varphi$ is a biholomorphism on
  a neighborhood of $\p \Sigma$, and hence its degree must be $1$.
  Since $\varphi$ is holomorphic, it preserves orientations, so that on
  the other hand, we would have that the degree would need to be
  \emph{at least} $k$, if there were such a critical point.
  We obtain that $\varphi$ has nowhere vanishing differential, and
  hence it must be a regular cover, but since it is of degree $1$, it
  is in fact a biholomorphism, and $\Sigma$ must be a disk.
\end{proof}

\subsubsection{The higher dimensional situation}\label{sec: elliptic
  singularity high dimension local model}

In this section, $L$ will always be a closed manifold, and we will
choose for $T^*L$ an almost complex structure~$J_L$ for which the
$0$-section~$L$ is totally real, so that there is by
Proposition~\ref{prop: totally real and psh functions} a
function~$f_L\colon T^*L \to [0,\infty)$ that vanishes on $L$ (and
only on $L$) and that is plurisubharmonic on a small neighborhood of
$L$.
As before, we will first describe a very explicit manifold that will
serve as a model for the neighborhood of an elliptic singularity.
Let $\CC^2 \times T^*L$ be the almost complex manifold with almost
complex structure $J = i\oplus J_L$, where $i$ is the standard complex
structure on $\CC^2$.
We define a function~$f\colon \CC^2 \times T^*L \to [0,\infty)$ by
\begin{equation*}
  f(z_1,z_2, \bfQ, \bfP) = \frac{1}{2}\,\bigl(\abs{z_1}^2 + \abs{z_2}^2\bigr)
  + f_L(\bfQ, \bfP)
\end{equation*}
If we stay in a sufficiently small neighborhood of the $0$-section of
$T^*L$, this function is clearly $J$-plurisubharmonic and we denote
its regular level set $f^{-1}\bigl(1/2\bigr)$ by $M$; its contact form
is given by
\begin{equation*}
  \alpha := \restricted{d^J f}{TM} = \restricted{\bigl(x_1\,dy_1 -
    y_1\,dx_1 + x_2\,dy_2 - y_2\,dx_2 + d^{J_L} f_L\bigr)}{TM} \;.
\end{equation*}
Now we define a submanifold~$N$ in $M$ as the image of the map
\begin{equation*}
  \Phi\colon \DD^2 \times L \hookrightarrow M
  \subset \CC^2 \times T^*L
\end{equation*}
given by $\Phi\bigl(z;\bfQ\bigr) = \Bigl(z, \sqrt{1-\abs{z}^2}; \bfQ,
\0\Bigr)$, that means, the image of $\Phi$ is the product of the
$0$-section in $T^*L$ and the submanifold~$N_0$ given in the previous
section.
The submanifold has a Legendrian foliation~$\fF$ induced by
\begin{equation*}
  \restricted{\alpha}{TN} = \Phi^* d^J f = x\,dy - y\,dx \;.
\end{equation*}
In particular, the leaves of the foliation are parallel to the
$L$-factor in $\DD^2 \times L$ and $\fF$ has an elliptic singularity
in $\{0\}\times L$.
Note that both the almost complex structure as well as the
submanifold~$N$ split as a product, thus if we consider a holomorphic
map
\begin{equation*}
  u\colon (\Sigma, \p\Sigma; j) \to
  \bigl(\CC^2 \times T^*L, N; J\bigr) \;,
\end{equation*}
we can decompose it into $u=(u_1, u_2)$ with
\begin{align*}
  u_1\colon& (\Sigma, \p\Sigma; j) \to (\CC^2, N_0; i) \\
  u_2\colon& (\Sigma, \p\Sigma; j) \to (T^*L, L; J_L) \;.
\end{align*}
This allows us to treat each factor independently from the other one,
and we will easily be able to obtain similar results as in the
previous section.
Since we are interested in finding a local model, we will first
restrict our situation to the following subset
\begin{equation} \label{eq: model neighborhood}
  U := \bigl\{ (z_1,z_2; \bfQ, \bfP)\bigm|
  \RealPart(z_2) \ge 1 - \delta\bigr\} \cap
  f^{-1}\bigl([0,1/2]\bigr)
\end{equation}
that is, for $\delta$ sufficiently small, a compact neighborhood of
$N$ in $f^{-1}\bigl([0,1/2]\bigr)$, because the points $(z_1,z_2;
\bfQ, \bfP)$ in $U$ satisfy
\begin{equation*}
  0 \le \frac{1}{2}\,\abs{z_1}^2 
  + f_L(\bfQ, \bfP) \le \frac{1}{2}\, \bigl(1 -  \abs{z_2}^2\bigr) \le
  \delta - \frac{1}{2}\, \delta^2 \le \delta
\end{equation*}
so that all coordinates are bounded.
Note in particular, that this bound on the $\bfP$-coordinates
guarantees that $f$ will be $J$-plurisubharmonic on $U$.
The submanifold $N\cap U$ can also be written in the following easy
form
\begin{equation*}
  \bigl\{ (z, x_2; \bfQ, \0)\bigm| \text{ $x_2 \ge 1 - \delta$
    and $\abs{z}^2 = 1 - x_2^2$}\bigr\} \times L \;.
\end{equation*}

\begin{corollary}\label{cor: hol curves in neighborhood elliptic sing
    high dim}
  Let
  \begin{equation*}
    u\colon (\Sigma, \p\Sigma, j) \to \bigl(U, N\cap U; J\bigr)
  \end{equation*}
  be any holomorphic map from a connected compact Riemann
  surface~$(\Sigma,j)$ to $U$ with $u(\p\Sigma) \subset N$.
  Either $u$ is constant or its image is one of the slices~$L_{x_2,
    \bfQ_0} = \Bigl( \CC\times \{x_2\} \times \{\bfQ_0\}\Bigr) \cap U$
  with $x_2\in [1-\delta, 1)$ and $\bfQ_0$ a point on the $0$-section
  of $T^*L$.
  If $u$ is injective at one of its boundary points, then $\Sigma$
  will be a disk, and $u$ is equal to
  \begin{equation*}
    u(z) = \bigl(u_{x_2}\circ \varphi(z); \bfQ_0, \0\bigr) \;,
  \end{equation*}
  where
  ~$u_{x_2}$ is the map given in Proposition~\ref{prop: representative
    for Bishop disk}, and $\varphi$ is a Möbius transformation of the
  unit disk.
\end{corollary}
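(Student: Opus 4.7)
My plan is to reduce everything to the product structure and then invoke the two main results already available: Proposition~\ref{prop: hol curves in neighborhood elliptic sing dim 4} for the $\CC^2$-factor, and Corollary~\ref{coro: holomorphic curves in cotangent} for the $T^*L$-factor.

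First I would exploit the product decomposition $J = i \oplus J_L$ and write $u = (u_1, u_2)$, where the projections
\begin{equation*}
  u_1 \colon (\Sigma, \p\Sigma, j) \to (\CC^2, N_0, i), \qquad
  u_2 \colon (\Sigma, \p\Sigma, j) \to (T^*L, L, J_L)
\end{equation*}
are each holomorphic with respect to the obvious factor structures, because the projections of $\CC^2 \times T^*L$ onto its two factors are $(i, J)$- and $(J_L, J)$-holomorphic, respectively. The boundary condition $u(\p\Sigma) \subset N = N_0 \times L$ splits likewise, so that $u_1(\p\Sigma) \subset N_0$ and $u_2(\p\Sigma) \subset L$.

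Next I would argue that $u_2$ is constant. By the defining estimate for $U$ from \eqref{eq: model neighborhood}, the image $u_2(\Sigma)$ lies in the sublevel set $\{f_L \le \delta\}$; provided $\delta$ was chosen small enough at the outset so that this sublevel set lies inside the neighborhood of $L$ on which $f_L$ is $J_L$-plurisubharmonic (as furnished by Proposition~\ref{prop: totally real and psh functions}), Corollary~\ref{coro: holomorphic curves in cotangent} applies directly and yields a point $\bfQ_0 \in L$ with $u_2 \equiv (\bfQ_0, \0)$.

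With $u_2$ constant, Proposition~\ref{prop: hol curves in neighborhood elliptic sing dim 4} governs the remaining factor~$u_1$: either $u_1$ is constant, in which case so is $u$, or the image of $u_1$ is one of the slices $L_{x_2}$ from that proposition. Combining these possibilities gives exactly the stated alternative, with the image of $u$ equal to $L_{x_2, \bfQ_0} = L_{x_2} \times \{(\bfQ_0, \0)\}$. Finally, assume that $u$ is injective at some boundary point $z_0 \in \p\Sigma$; since $u_2$ is constant, injectivity of $u$ at $z_0$ is equivalent to injectivity of $u_1$ at $z_0$, so Proposition~\ref{prop: hol curves in neighborhood elliptic sing dim 4} forces $\Sigma$ to be a disk and $u_1 = u_{x_2} \circ \varphi$ for a Möbius transformation $\varphi$ of $\DD^2$; substituting back gives the formula $u(z) = \bigl(u_{x_2} \circ \varphi(z);\, \bfQ_0, \0\bigr)$.

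The only genuine subtlety is ensuring that the choice of $\delta$ is compatible with the plurisubharmonicity domain of $f_L$ so that Corollary~\ref{coro: holomorphic curves in cotangent} is actually applicable; once this is built into the definition of the model neighborhood $U$, everything else is a bookkeeping of the product decomposition together with two results already at our disposal.
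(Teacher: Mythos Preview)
Your proposal is correct and matches the paper's own proof essentially line for line: decompose $u$ via the product structure $J = i \oplus J_L$, use Corollary~\ref{coro: holomorphic curves in cotangent} to force the $T^*L$-component to be constant, and then invoke Proposition~\ref{prop: hol curves in neighborhood elliptic sing dim 4} for the $\CC^2$-component. Your explicit remarks about the transfer of boundary injectivity from $u$ to $u_1$ and about the compatibility of $\delta$ with the plurisubharmonicity domain of $f_L$ are details the paper leaves implicit, so if anything your write-up is slightly more careful.
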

\begin{proof}
  Let $u$ be a $J$-holomorphic map as in the statement.
  We will study $u$ by decomposing it into $u = \bigl(u_{\CC^2},
  u_{T^*L}\bigr)$ with
  \begin{align*}
    u_{\CC^2}\colon& (\Sigma, \p\Sigma, j) \to (\CC^2, N, i) \\
    u_{T^*L}\colon& (\Sigma, \p\Sigma, j) \to (T^*L, L, J_L) \;.
  \end{align*}
  Using that $f_L$ is $J_L$-plurisubharmonic on the considered
  neighborhood of the $0$-section contained in $U$, it follows from
  Corollary~\ref{coro: holomorphic curves in cotangent} that
  $u_{T^*L}$ is constant.
  Once we know that $u_{T^*L}$ is constant, the situation for
  $u_{\CC^2}$ is identical to the one in Proposition~\ref{prop: hol
    curves in neighborhood elliptic sing dim 4}, so that we obtain the
  desired result.
\end{proof}

The results obtained so far only explain the behavior of holomorphic
curves that are completely contained in the model neighborhood~$U$.
Next we will extend this result to show that a holomorphic curve is
either disjoint from the subset~$U$ or is lies completely inside $U$.
Assume $(W,J)$ is a compact almost complex manifold with convex
boundary $M = \p W$.
Let $N$ be a submanifold of $M$, and assume that there is a compact
subset~$U$ in $W$ such that $U$ is diffeomorphic to the model above,
with $M\cap U$, $N\cap U$ and $\restricted{J}{U}$ all being equal to
the corresponding objects in our model neighborhood.

\begin{proposition}\label{prop: no hol curve may enter model
    neighborhood from outside}
  Let
  \begin{equation*}
    u\colon (\Sigma, \p\Sigma; j) \to (W, N; J)
  \end{equation*}
  be a holomorphic map, and let $U$ be a compact subset of $W$ that
  agrees with the model described above.
  If $u(\Sigma)$ intersects $U$, then it has to lie entirely in $U$,
  and it will be consequently of the form given by Corollary~\ref{cor:
    hol curves in neighborhood elliptic sing high dim}.
\end{proposition}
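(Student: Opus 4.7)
The plan is to argue by contradiction. I would assume $u$ is non-constant (otherwise the statement is trivial), that $u(\Sigma)\cap U\ne\emptyset$, and that $u(\Sigma)\not\subset U$, and derive a contradiction. Let $\tilde V\subset\Sigma$ denote the interior (as a subset of $\Sigma$) of the closed preimage $u^{-1}(U)$. The decisive tool is the coordinate $z_2\colon U\to\CC$: since $\restricted{J}{U}=i\oplus J_L$ acts as multiplication by $i$ in the $z_2$-direction, $z_2$ is $J$-holomorphic on $U$, and hence $\zeta:=z_2\circ u$ is a genuine holomorphic function on $\tilde V$. The degenerate case $\tilde V=\emptyset$ reduces to the ``constant $\zeta$'' scenario treated below: such a $u$ would have to be tangent along an open set to the Levi-flat cut $C=\{x_2=1-\delta\}$, and a standard integrability argument then places $u$ inside a single Levi leaf. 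So I may assume $\tilde V\ne\emptyset$, and by the contradiction hypothesis $\tilde V\ne\Sigma$, hence $\p\tilde V\ne\emptyset$.

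First I would analyze the boundary and show $x_2\circ u\equiv 1-\delta$ on $\p\tilde V$, i.e.\ $u(\p\tilde V)\subset C$. Automatically $u(\p\tilde V)\subset\p U$. For $z_0\in\p\tilde V$ interior to $\Sigma$, Corollary~\ref{cor: boundary of curves transverse to Legendrian foliation} forbids $u(z_0)\in M$ because $u$ is non-constant, so $u(z_0)\in\p U\setminus M=C$. For $z_0\in\p\Sigma\cap\p\tilde V$, the point $u(z_0)\in N\cap U$ must satisfy $x_2(u(z_0))=1-\delta$: otherwise the cut constraint is locally vacuous near $u(z_0)$ in the model, and the model description shows that the interior-of-$W$ half-space neighborhood of $u(z_0)$ lies inside $U$, so a half-disk neighborhood of $z_0$ in $\Sigma$ maps into $U$, forcing $z_0\in\tilde V$.

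Next I would take any connected component $K$ of $\tilde V$ and exploit $\zeta|_K$. Because distinct components of an open subset of $\Sigma$ cannot share interior limit points, one checks $\p K\subset\p\tilde V$, so $\RealPart\zeta=1-\delta$ on $\p K$ while $\RealPart\zeta\ge 1-\delta$ on $K$. If $\zeta$ were non-constant on $K$, then $\RealPart\zeta$ would be a non-constant harmonic function on the connected open set $K$ (a constant real part would make $\zeta$ constant by Cauchy--Riemann), and the strong minimum principle would force $\RealPart\zeta>1-\delta$ strictly on $K$; but the maximum principle applied to the harmonic function $\RealPart\zeta$ on the bounded open set $K$ with boundary value $1-\delta$ gives $\sup_K\RealPart\zeta\le 1-\delta$, a contradiction. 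Hence $\zeta\equiv z_2^0$ on $K$, and $u(K)$ lies in the closed $J$-complex submanifold $L_{z_2^0}=\{z_2=z_2^0\}\cap U$ of $W$. Standard Aronszajn-type unique continuation for $J$-holomorphic maps into $J$-complex submanifolds then upgrades this to $u(\Sigma)\subset L_{z_2^0}\subset U$, contradicting $u(\Sigma)\not\subset U$. Once $u(\Sigma)\subset U$ is established, the claimed form of $u$ follows directly from Corollary~\ref{cor: hol curves in neighborhood elliptic sing high dim}.

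The hard part will be the boundary analysis at $\p\Sigma\cap\p\tilde V$: one has to unpack the explicit model structure of $U$ near $N\cap U$ carefully to see that the half-disk really does slip inside $U$ whenever $x_2>1-\delta$ at the touching point. The rest of the argument is a clean interplay between the maximum principle, which precludes a non-constant $\zeta$ on any component of $\tilde V$, and unique continuation, which promotes constancy on one component to global containment in a single holomorphic slice.
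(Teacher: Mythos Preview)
Your overall strategy---exploit the holomorphicity of $z_2\circ u$ on the preimage of the model and use a maximum-principle argument---is the same as the paper's, but the execution has two genuine gaps.

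First, the maximum/minimum principle step on a component $K$ of $\tilde V$ silently assumes $K$ is an open subset of a boundaryless surface. Since $\tilde V$ is open in $\Sigma$ and $\Sigma$ has boundary, $K$ may well contain points of $\partial\Sigma$, and at such a point the interior maximum principle says nothing. Exactly here one needs the step the paper spells out: at an extremum $z_0\in K\cap\partial\Sigma$ the boundary point lemma gives a strictly nonzero outward normal derivative of $\RealPart\zeta$ unless it is constant, while $\ImaginaryPart\zeta=y_2\circ u$ vanishes identically along $\partial\Sigma$ (because $u(\partial\Sigma)\subset N$ and $y_2=0$ on $N$), so the Cauchy--Riemann equations kill that normal derivative and force constancy. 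Without this you cannot conclude that the extremum sits on $\partial K$.

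Second, and more seriously, even granting that $\zeta$ is constant on $K$, your own boundary analysis pins the constant value to $\RealPart z_2^0=1-\delta$: you showed $\RealPart\zeta=1-\delta$ on $\partial K$, so by continuity the constant equals $1-\delta$. Thus $u(K)$ sits in the cut $\partial_W U$, and the ``$J$-complex submanifold'' $L_{z_2^0}=\{z_2=z_2^0\}\cap U$ is only defined inside the model chart~$U$; beyond $\partial_W U$ there is no $z_2$-coordinate, so it is not a closed $J$-complex submanifold of $W$. An Aronszajn-type unique continuation therefore cannot propagate the containment $u(K)\subset L_{z_2^0}$ across $\partial K$ to the rest of $\Sigma$, and the argument stalls. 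The paper sidesteps both issues by first perturbing the cut-off level~$\delta$ via Sard so that $u$ is transverse to $\partial_W U$; then $G=u^{-1}(U)$ is a genuine subsurface, transversality forces the maximum of $h_x\circ u$ on $G$ to be strictly larger than $1-\delta$ and hence to lie on $\partial_M G\subset\partial\Sigma$, and the boundary-point-lemma/Cauchy--Riemann step yields constancy with value $>1-\delta$. That component of $G$ then avoids $\partial_W G$, is open and closed in $\Sigma$, and equals $\Sigma$---no unique continuation needed.
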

\begin{proof}
  Assume $u$ to be a holomorphic map whose image lies partially in
  $U$.
  The set~$U$ is a compact manifold with corners, and we write $\p U =
  \p_M U \cup \p_W U$, where
  \begin{align*}
    \p_M U &= U \cap M \\
    \p_W U &= \bigl\{ (z_1,z_2; \bfQ, \bfP)\bigm| \RealPart(z_2) \ge 1
    - \delta\bigr\} \cap f^{-1}\bigl([0,1/2]\bigr)
  \end{align*}
  We will show that the real part of the $z_2$-coordinate of $u$ needs
  to be constant.
  This then proves the proposition, because it prevents $u$ from
  leaving $U$.

\begin{figure}[htbp]
  \centering
  \includegraphics[height=4.5cm,keepaspectratio]{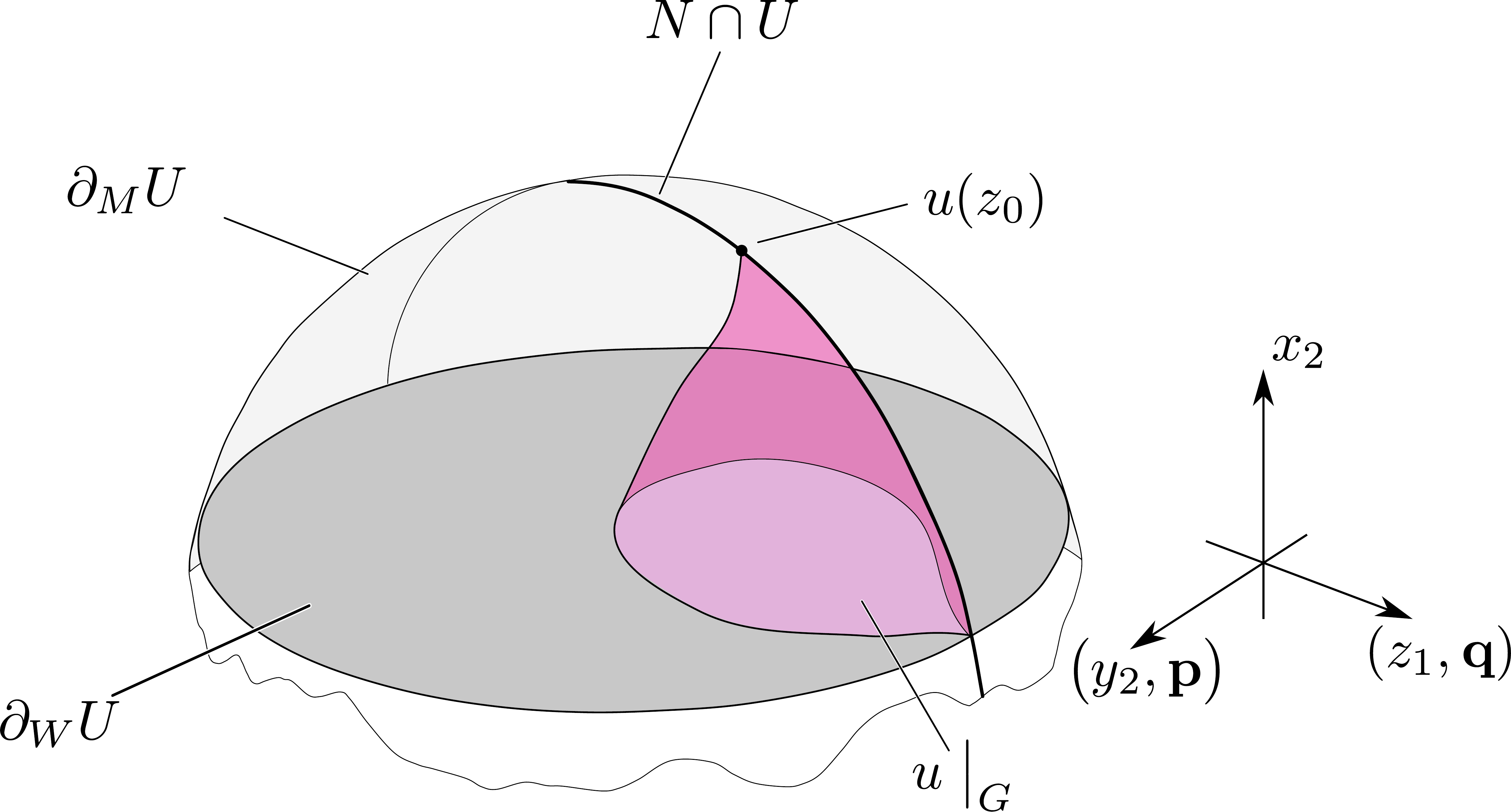}
  \caption{}\label{fig: model_nbhd_sing_set}
\end{figure}

  Thus assume instead that the real part of $z_2$ does vary on $u$.
  Slightly decreasing the cut-off level~$\delta$ in \eqref{eq: model
    neighborhood} using Sard's theorem, the holomorphic map $u$ will
  intersect $\p_W U$ transversely, so that $u^{-1}\bigl(\p_W U\bigr)$
  will be a properly embedded submanifold of $\Sigma$.
  We will restrict $u$ to the compact subset $G =
  u^{-1}\bigl(U\bigr)$, and denote the boundary components of this
  domain by $\p_M G = u^{-1}\bigl(N\cap U\bigr)$ and $\p_W G =
  u^{-1}\bigl(\p_W U\bigr)$.
  We thus have a holomorphic map
  \begin{equation*}
    \restricted{u}{G}\colon (G, \p G; j) \to
    \bigl(U, \p U; J\bigr)
  \end{equation*}
  with $u\bigl(\p_M G\bigr) \subset N\cap U$ and $u\bigl(\p_W
  G\bigr)\subset \p_W U$.
  The coordinate maps $h_x\colon (z_1,z_2; \bfQ, \bfP) \mapsto
  \RealPart(z_2)$ and $h_y\colon (z_1,z_2; \bfQ, \bfP) \mapsto
  \ImaginaryPart(z_2)$ are harmonic, and it follows by the maximum
  principle that the maximum of $h_x\circ \restricted{u}{G}$ will lie
  for each component of $G$ on the boundary of that component.
  Furthermore the maximum of $h_x\circ \restricted{u}{G}$ cannot lie
  on $\p_W G$, because by our assumption $\restricted{u}{G}$ is
  transverse to $\p_W U$.
  It follows that the maximum of $h_x\circ \restricted{u}{G}$ will be
  a point $z_0 \in \p_M G$; in particular $z_0$ does not lie on one of
  the edges of $G$.
  By the boundary point lemma, either $h_x\circ \restricted{u}{G}$ is
  constant or the outward derivative of this function at $z_0$ must be
  strictly positive.
  On the other hand, the function $h_y\circ \restricted{u}{G}$ is
  equal to $0$ all along the boundary~$\p_M G$ so that the derivatives
  of $h_x\circ \restricted{u}{G}$ and $h_y\circ \restricted{u}{G}$
  vanish at $z_0$ in directions that are tangent to the boundary.
  Using the Cauchy-Riemann equation we see that this implies that the
  derivatives of these two functions at $z_0$ vanish in \emph{every
    direction}, in particular this implies
  that the function $h_x\circ \restricted{u}{G}$ needs to be constant.
  In either case, we have proved that the image of $u$ lies completely
  inside $U$.
\end{proof}

The conclusion of the results in this section is that every curve that
intersects a certain neighborhood of the elliptic singularities lies
completely in this neighborhood and can be explicitly determined.

\subsection{$J$-holomorphic curves close to codimension~$1$
  singularities}

Let $(N, \fF)$ be a submanifold with Legendrian foliation and with
non-empty boundary.
We will show in this section that a boundary component of $N$ lying in
in the singular set of $\fF$ can sometimes exclude that any
holomorphic curve gets close to this component.
This way, the boundary may block any holomorphic disks from escaping
the submanifold~$N$.
The argument is similar to that of the previous section, where we
constructed an almost complex manifold that served as a model for the
neighborhood of the singular set.

\begin{remark}
  We will only be dealing here with the easiest type of singular sets:
  Products of a closed manifold with $\SSS^1$.
  A more general situation has been considered in
  \cite{WeafFillabilityHigherDimension}, where the singular set is
  allowed to be a fiber bundle over the circle.
\end{remark}

Let $T^*F$ be the cotangent bundle of a closed manifold~$F$, choose an
almost complex structure $J_F$ on $T^*F$ for which $F$ is a totally
real submanifold, and let $f_F\colon T^*F \to [0,\infty)$ be the
function constructed in Proposition~\ref{prop: totally real and psh
  functions} that only vanishes along the $0$-section of $T^*F$ and
that is $J_F$-plurisubharmonic close to the $0$-section~$F$.
Define $(W,J)$ as
\begin{equation*}
  W := \CC \times T^*\SSS^1 \times T^*F =
  \bigl\{(x+iy;\varphi, r; \bfQ,\bfP)\bigr\} \;,
\end{equation*}
and let $J$ be the almost complex structure $i \oplus i \oplus J_F$,
where the complex structure on $T^*\SSS^1$ is the one induced from the
identification of $T^*\SSS^1$ and $\CC / (2\pi\ZZ)$ with $\varphi +ir
\sim \varphi + 2\pi +ir$.
The function
\begin{equation*}
  f\colon W \to [0,\infty), \, (x+iy; \varphi, r;\bfQ,\bfP) \mapsto
  \frac{1}{2}\,\bigl(x^2 + y^2\bigr) + \frac{1}{2}\, r^2
  + f_F (\bfQ,\bfP)
\end{equation*}
is $J$-plurisubharmonic on a neighborhood where the values of $\bfP$
are small.
We denote the level set $f^{-1}(1/2)$ by $M$, and note that for small
values of $\bfP$, it is a smooth contact manifold with contact form
\begin{equation*}
  \alpha_M := \restricted{\bigl( x\,dy - y\,dx -  r\,d\varphi
    + d^{J_F}f_F\bigr)}{TM} \;.
\end{equation*}
Let $N$ be the submanifold of $M$ given as the image of the map
\begin{equation*}
  \Phi\colon \SSS^1\times [0,\epsilon) \times F, \,
  (\varphi, r; \bfQ) \mapsto \bigl(\sqrt{1-r^2}; \varphi, r; \bfQ,\0\bigr) \;.
\end{equation*}
It has a Legendrian foliation~$\fF$, because $\Phi^*\alpha_M = -r\,
d\varphi$ that becomes singular exactly at the boundary $\p N =
\bigl\{1\} \times \SSS^1 \times F$.
Our local model will be the subset
\begin{equation*}
  U = \bigl\{(x+iy;\varphi, r;\bfQ,\bfP)\bigm|\,
  x\ge 1-\delta \bigr\} \cap f^{-1}\bigl([0,1/2]\bigr)
\end{equation*}
for sufficiently small~$\delta > 0$.
Clearly $U$ contains $\p N = \sing \bigl(\ker(-r\,d\varphi)\bigr)$.
Furthermore $U$ is compact, because all coordinates are bounded:
Points $(x+iy; \varphi, r;\bfQ,\bfP)$ in $U$ satisfy
\begin{equation*}
  0 \le \frac{1}{2}\, y^2 + \frac{1}{2}\, r^2 + f_F (\bfQ,\bfP)
  = f(x+iy; \varphi, r;\bfQ,\bfP) - \frac{1}{2}\, x^2
  \le 1/2\,\bigl(1 - x^2\bigr) \le \delta\;.
\end{equation*}
We also obtain that if $\delta$ has been chosen small enough, $f$ is
everywhere $J$-plurisubharmonic on $U$.

\begin{remark}
  Note that the construction of the local model also applies in the
  case of contact $3$-manifolds, because $F$ may be just a point.
\end{remark}

We will first exclude existence of holomorphic curves that are
entirely contained in $U$.

\begin{proposition}\label{prop: no curves close to codim1 singularity}
  A $J$-holomorphic map
  \begin{equation*}
    u\colon (\Sigma,\p\Sigma, j) \to (U, N \cap U, J)
  \end{equation*}
  from a compact Riemann surface into $U$, whose boundary is mapped
  into $N\cap U$, must be constant.
\end{proposition}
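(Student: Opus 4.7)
The plan is to exploit the product structure of $(W, J)$: since $J = i \oplus i \oplus J_F$ and $N$ itself splits as a product (in $U$ it is $\{(\sqrt{1-r^2},0;\varphi,r;\bfQ,\0)\}$), any such $u$ decomposes as $u = (u_\CC, u_{T^*\SSS^1}, u_{T^*F})$, each factor being holomorphic with respect to the corresponding complex structure. The goal is to show successively that each of the three components is constant, using Corollary~\ref{coro: holomorphic curves in cotangent} once and the maximum principle applied to carefully chosen $J$-harmonic coordinate functions twice.

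First I would handle the $T^*F$-factor. Because $N\cap U$ has its $T^*F$-coordinate on the zero section~$F$, one has $u_{T^*F}(\p\Sigma)\subset F$; and since $u(\Sigma)\subset U$ forces $f_F\circ u_{T^*F}\le \delta$, the image lies entirely in the neighborhood of $F$ on which $f_F$ is $J_F$-plurisubharmonic. Corollary~\ref{coro: holomorphic curves in cotangent} then gives $u_{T^*F}\equiv(\bfQ_0,\0)$ for some $\bfQ_0\in F$. Next I would turn to the $\CC$-factor. The coordinate $y=\ImaginaryPart(u_\CC)$, regarded as a function on $W$, is globally $J$-harmonic (a trivial computation since $y$ only sees the $\CC$-factor), so $y\circ u$ is harmonic on $\Sigma$ and vanishes on $\p\Sigma$ because $N$ lies in $\{y=0\}$. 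Applying the maximum principle (Theorem~\ref{thm: maximum principle}) to both $y$ and $-y$ forces $y\equiv 0$, and the Cauchy-Riemann equations then make $x=\RealPart(u_\CC)$ constant on each connected component of $\Sigma$, say $x\equiv x_0$.

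Finally I would deal with the $T^*\SSS^1$-factor. Since $u(\p\Sigma)\subset N$ and $x\equiv x_0$, along $\p\Sigma$ the $r$-coordinate satisfies $r=\sqrt{1-x_0^2}$, i.e.\ it has constant boundary value. The function $r$ is globally $J$-harmonic on $W$, so $r\circ u$ is harmonic on $\Sigma$ with constant boundary values, and applying the maximum principle to both $r$ and $-r$ forces $r$ to be constant on each component. A last use of Cauchy-Riemann then gives $d\varphi=0$, so $\varphi$ is also constant on each component, completing the argument. There is no really hard step here; the mildest subtlety is that $\varphi$ is a coordinate on $T^*\SSS^1\cong\CC/(2\pi\ZZ)$ and hence only defined modulo $2\pi$, but since only $d\varphi$ ever enters the argument this causes no difficulty, and the connectedness assumption is harmless because the statement can be applied component-wise.
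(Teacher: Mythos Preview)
Your argument is correct and follows essentially the same route as the paper: decompose $u$ according to the product almost complex structure, kill the $T^*F$-factor via Corollary~\ref{coro: holomorphic curves in cotangent}, use harmonicity of $\ImaginaryPart(z)$ to freeze the $\CC$-factor, and then use harmonicity of $r$ (with the now-constant boundary value forced by $x_0=\sqrt{1-r^2}$) to freeze the $T^*\SSS^1$-factor. One small remark: your phrase ``$N$ itself splits as a product'' is not quite accurate for the first two factors, since the $\CC$- and $T^*\SSS^1$-coordinates of $N$ are coupled through $x=\sqrt{1-r^2}$; but you handle this coupling correctly by treating the $\CC$-factor first, so the argument is unaffected.
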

\begin{proof}
  As in the previous section, we can decompose $u$ as
  $\bigl(u_{\CC\times T^*\SSS^1}, u_{T^*F}\bigr)$ with
  \begin{align*}
    u_{\CC\times T^*\SSS^1}\colon & (\Sigma,\p\Sigma, j) \to
    \Bigl(\CC\times T^*\SSS^1, \bigl\{ (\sqrt{1-r^2}; \varphi,
    r)\bigm|\, \varphi\in \SSS^1, r\in [0,\epsilon)\bigr\},
    i\oplus i\Bigr) \\
    u_{T^*F}\colon & (\Sigma,\p\Sigma, j) \to (T^*F, F, J_F) \;.
  \end{align*}
  Note in particular that the boundary conditions also split in this
  decomposition, so that we obtain two completely uncoupled problems.
  Furthermore, using Corollary~\ref{coro: holomorphic curves in
    cotangent}, it follows that the second map is constant, because
  $f_F$ is a $J_F$ plurisubharmonic function on the considered
  neighborhood.
  To show that $u_{\CC\times T^*\SSS^1}$ is constant, use the harmonic
  function $g\bigl(z;\varphi,r\bigr) = \ImaginaryPart(z)$.
  Since $g\circ u_{\CC\times T^*\SSS^1}$ vanishes along $\p\Sigma$, it
  follows that $g\circ u_{\CC\times T^*\SSS^1}$ has to be zero on the
  whole Riemann surface, and combining this with the Cauchy-Riemann
  equation, it follows that the real part of the $z$-coordinate of
  $u_{\CC\times T^*\SSS^1}$ is equal to a constant $C\in [1-\delta,
  1]$.
  Now that we know that the first coordinate of $u_{\CC\times
    T^*\SSS^1}$ is constant, we see that the boundary of $u_{\CC\times
    T^*\SSS^1}$ has to lie in the circle $\bigl\{ (C; \varphi, +
  \sqrt{1-C^2})\bigm|\, \varphi\in \SSS^1\bigr\} \subset \CC\times
  T^*\SSS^1$.
  This allows us to study only the second coordinate of $u_{\CC\times
    T^*\SSS^1}$ reducing our map to the form
  \begin{equation*}
    u_{T^*\SSS^1} \colon (\Sigma,\p\Sigma, j) \to
    \bigl(T^*\SSS^1, S, i\bigr) \;,
  \end{equation*}
  where $S = \bigl\{ r = + \sqrt{1-C^2} \bigr\}$.
  Using that the map $(r,\varphi) \mapsto r$ is harmonic, and that it
  is constant along the boundary of $\Sigma$, we obtain that the whole
  image of the surface has to lie in the corresponding circle,
  implying with the Cauchy-Riemann equation that $u_{T^*\SSS^1}$ needs
  to be constant.
\end{proof}

Next we will show that holomorphic curves may not enter the domain~$U$
even partially.
Let $(W,J)$ be now a compact almost complex manifold with convex
boundary $M = \p W$, and let $N$ be a submanifold of $M$ with $\p N\ne
\emptyset$.
Assume that $W$ contains a compact subset~$U$ that is identical to the
model neighborhood constructed above such that $M\cap U$, $N\cap U$
and $\restricted{J}{U}$ all agree with the corresponding objects in
the model.

\begin{proposition}\label{lemma: complexStructureBoundaryBLOB}
  If the image of a $J$-holomorphic map
  \begin{equation*}
    u\colon (\Sigma,\p\Sigma, j) \to (W, N, J)
  \end{equation*}
  intersects the neighborhood~$U$, then it will be constant.
\end{proposition}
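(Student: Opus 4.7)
The strategy mirrors that of Proposition~\ref{prop: no hol curve may enter model neighborhood from outside} for the elliptic case. Denote by $h_x, h_y\colon W \to \RR$ the projections $(x+iy; \varphi, r; \bfQ, \bfP) \mapsto x$ and $\mapsto y$ onto the real and imaginary parts of the $\CC$-factor. Since $J$ restricts to $i$ on this factor, both functions are $J$-harmonic; moreover $h_y$ vanishes identically on $N$, because the first coordinate of $\Phi(\varphi, r; \bfQ) = \bigl(\sqrt{1-r^2}; \varphi, r; \bfQ, \0\bigr)$ is real. The plan is to assume $u$ is non-constant and show it cannot partially enter $U$; the remaining case $u(\Sigma)\subset U$ is then disposed of by direct appeal to Proposition~\ref{prop: no curves close to codim1 singularity}.

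First I would invoke Sard's theorem to perturb $\delta$ slightly so that $u$ becomes transverse to the wall $\p_W U = \{x = 1-\delta\} \cap f^{-1}([0,1/2])$. This makes $G := u^{-1}(U)$ a compact manifold with corners whose boundary decomposes as $\p_M G \sqcup \p_W G$, with $u(\p_M G) \subset N \cap U$ and $u(\p_W G) \subset \p_W U$. The subharmonic function $h_x \circ u|_G$ attains a maximum which, under the partial-entry hypothesis, is strictly larger than $1-\delta$; this rules out $\p_W G$, while the maximum principle (Theorem~\ref{thm: maximum principle}) rules out the interior of $G$. Hence the maximum is attained at some $z_0 \in \p_M G$ lying away from the corners of $G$.

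The crux of the argument is a Cauchy--Riemann cancellation at $z_0$. Because $h_y \circ u$ vanishes identically along $\p_M G$, its derivative in the direction tangent to $\p_M G$ vanishes at $z_0$; applying the Cauchy--Riemann equations to the holomorphic $\CC$-valued function $h_x \circ u + i\, h_y \circ u$ forces the derivative of $h_x \circ u$ transverse to $\p_M G$ to vanish at $z_0$ as well. The boundary point lemma (Theorem~\ref{thm: boundary point lemma}) then forces $h_x \circ u$ to be constant on the connected component of $G$ containing $z_0$. Since $z \circ u\colon \Sigma \to \CC$ is holomorphic and $\RealPart(z\circ u)$ is constant on an open subset of $\Sigma$, analytic continuation yields that $z \circ u$ is constant on all of $\Sigma$, equal to some $z_* \in \CC$ with $\RealPart z_* > 1-\delta$.

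But then every point of $u(\Sigma)$ has $x$-coordinate $\geq 1 - \delta$, so in fact $u(\Sigma) \subset U$, contradicting the partial-entry assumption. Thus the only remaining possibility is $u(\Sigma) \subset U$, and Proposition~\ref{prop: no curves close to codim1 singularity} then forces $u$ to be constant. The main obstacle is getting the Sard perturbation of $\delta$ and the boundary point lemma to cooperate—one must ensure that $z_0$ lies on the smooth part of $\p_M G$ away from the corners, which follows from the strict inequality $h_x(u(z_0)) > 1-\delta$ together with the transversality of $u$ with $\p_W U$.
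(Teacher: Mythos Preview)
Your approach is exactly what the paper intends: it explicitly tells the reader to follow the line of argument of Proposition~\ref{prop: no hol curve may enter model neighborhood from outside} and then apply Proposition~\ref{prop: no curves close to codim1 singularity}, which is precisely your plan, and your Cauchy--Riemann/boundary-point-lemma step is the right mechanism.

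There is one slip worth fixing. You write ``$h_x, h_y\colon W \to \RR$'' and later ``$z \circ u\colon \Sigma \to \CC$'' and invoke analytic continuation over all of $\Sigma$. But the coordinate $z$ on the $\CC$-factor is only defined on the model neighborhood $U$, not on the ambient manifold $W$; hence $z\circ u$ and $h_x\circ u$ are only defined on $G = u^{-1}(U)$, and there is no analytic continuation across $\p_W G$. The conclusion you want is more direct and does not need it: once the boundary point lemma gives that $h_x\circ u$ is constant on the component $G_0$ of $G$ containing $z_0$, note that if $G_0$ had any $\p_W G$-boundary then $h_x\circ u$ would equal $1-\delta$ there, contradicting that its constant value is $h_x(u(z_0)) > 1-\delta$. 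Thus $\p G_0 \subset \p\Sigma$, so $G_0$ is open and closed in $\Sigma$; assuming $\Sigma$ connected (harmless, since one may treat components separately), $G_0 = \Sigma$ and $u(\Sigma)\subset U$, after which Proposition~\ref{prop: no curves close to codim1 singularity} finishes the job.
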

\begin{proof}
  It suffices to show that the image of $u$ lies inside $U$, because
  we can then apply Proposition~\ref{prop: no curves close to codim1
    singularity}.
  Following the same line of arguments as in the proof of
  Proposition~\ref{prop: no hol curve may enter model neighborhood
    from outside}, one can show that the real part of the first
  coordinate of $u$ needs to be constant.
  We recommend the reader to work out the details as an exercise.
\end{proof}

\begin{remark}
  Note that when the codimension~$1$ singular set lies in the interior
  of the maximally foliated submanifold, one can find under additional
  conditions a family of holomorphic annuli with one boundary
  component on each side of the singular set (see
  \cite{NiederkrugerWendl}).
  The reason why these curves do not appear in the results of this
  section are that we are assuming that all boundary components of the
  holomorphic curves lie locally on one side of the singular set.
\end{remark}

\chapter{Moduli spaces of disks and filling obstructions}
\label{chap: moduli spaces}

\section{The moduli space of holomorphic disks}

Let us assume again that $(W,J)$ is an almost complex manifold, and
that $N\subset W$ is a totally real submanifold.
We want to study the space of maps
\begin{equation*}
  u\colon (\DD^2, \p \DD^2) \to (W, N; J)
\end{equation*}
that are $J$-holomorphic (strictly speaking they are
$(i,J)$-holomorphic), meaning that we want the differential of $u$ to
be complex linear, so that it satisfies at every $z\in \Sigma$ the
equation
\begin{equation*}
  Du_z\cdot i = J\bigl(u(z)\bigr)\cdot Du_z \;.
\end{equation*}
Note that $J$ depends on the point $u(z)$!
A different way to state this equation is by introducing the
Cauchy-Riemann operator
\begin{equation*}
  \bar\partial_J u = J(u)\cdot Du - Du\cdot i \;,
\end{equation*}
and writing $\bar \partial_J u = 0$, so that the space of
$J$-holomorphic maps, we are interested in then becomes
\begin{equation*}
  \widetilde \mM \bigl(\DD^2,  N; J\bigr) =
  \bigl\{u\colon  \DD^2 \to W \bigm|\,
  \text{$\bar\partial_J u = 0$ and $u(\p \DD^2) \subset N$} \bigr\} \;.
\end{equation*}

\begin{remark}
  The situation of holomorphic disks is a bit special compared to the
  one of general holomorphic maps, because all complex structures on
  the disk are equivalent.
  If $\Sigma$ were a smooth compact surface of higher genus, we would
  usually need to study the space of pairs $(u,j)$, where $j$ is a
  complex structure on $\Sigma$, and $u$ is a map $u\colon (\Sigma, \p
  \Sigma) \to (W,N)$ that should be $(j,J)$-holomorphic, that means,
  $J(u)\cdot Du - Du\cdot j = 0$.
  To be a bit more precise, we do not choose pairs $(u,j)$ with
  arbitrary complex structures $j$ on $\Sigma$, but we only allow for
  $j$ a single element in each equivalence class of complex
  structures:
  If $\varphi\colon \Sigma \to \Sigma$ is a diffeomorphism, and $j$ is
  some complex structure, then of course $\varphi^*j$ will generally
  be a complex structure different from $j$, but we usually identify
  all complex structures up to isotopy, and use that the space of
  equivalence classes of complex structures can be represented as a
  smooth finite dimensional manifold (see \cite{Hummel} for a nice
  introduction to this theory).
  Fortunately, these complications are not necessary for holomorphic
  disks (or spheres), and it is sufficient for us to work with the
  standard complex structure~$i$ on $\DD^2$.
\end{remark}

In this section, we want to explain the topological structure of the
space $\widetilde \mM \bigl(\DD^2, N; J\bigr)$ without entering into
too many technical details.
Instead of starting directly with our particular case, we will try to
argue on an intuitive level by considering a finite dimensional
situation that has strong analogies with the problem we are dealing
with.
Let us consider a vector bundle~$E$ of rank~$r$ over a smooth
$n$-manifold~$B$.
Choose a section~$\sigma\colon B\to E$, and let $M= \sigma^{-1}(0)$ be
the set of points at which $\sigma$ intersects the $0$-section.
We would ``expect'' $M$ to be a smooth submanifold of dimension~$\dim
M = n - r$ (if $n-r < 0$, we could hope not to have any intersections
at all); unfortunately, this intuitive expectation might very well be
false.
A sufficient condition under which it holds, is when $\sigma$ is
transverse to the $0$-section, that means, for every $x\in M$, the
tangent space to the $0$-section $T_xB$ in $T_xE$ spans together with
the image $D\sigma\cdot T_xB$ the whole tangent space $T_xE$.
It is well-known that when the transversality condition is initially
not true, it can be achieved by slightly perturbing the section
$\sigma$.
Let us now come again to the Cauchy-Riemann problem.
The role of $B$ will be taken by the space of all maps $u\colon
\bigl(\DD^2, \p \DD^2\bigr) \to (W,N)$, which we will denote by
$\bB\bigl(\DD^2; N\bigr)$.
We do not want to spend any time thinking about the regularity of the
maps and point instead to \cite{McDuffSalamonJHolo} as reference.
It is sufficient for us to observe that the space $\bB\bigl(\DD^2;
N\bigr)$ is a Banach manifold, that means, an infinite dimensional
manifold modeled on a Banach space.
The section~$\sigma$ will be replaced by the Cauchy-Riemann
operator~$\bar\partial_J$, and before pursuing this analogy further,
we want first to specify the target space of this operator.
In fact, $\bar \partial_J$ associates to every map $u\in
\bB\bigl(\DD^2; N\bigr)$ a $1$-form on $\Sigma$ with values in $TW$.
The formal way to state this is that we have for every map $u$ a
vector bundle $u^* TW$ over $\DD^2$, which allows us to construct
\begin{equation*}
  \Hom\bigl(T\DD^2, u^* TW\bigr) \;.
\end{equation*}
The sections in $\Hom\bigl(T\DD^2, u^* TW\bigr)$ form a vector space,
and if we look at all sections for \emph{all} maps~$u$, we obtain a
vector bundle over $\bB\bigl(\DD^2; N\bigr)$, whose fiber over a point
$u$ are all sections in $\Hom\bigl(T\DD^2, u^* TW\bigr)$.
We denote this bundle by $\eE\bigl(\DD^2; N\bigr)$.
The operator~$\bar \partial_J$ associates to every $u$, that means, to
every point of $\bB\bigl(\DD^2; N\bigr)$ an element in
$\eE\bigl(\DD^2; N\bigr)$ so that we can think of $\bar \partial_J$ as
a section in the bundle $\eE\bigl(\DD^2; N\bigr)$.
The $J$-holomorphic maps are the points of $\bB\bigl(\DD^2; N\bigr)$
where the section $\bar \partial_J$ intersects the $0$-section.
In fact, $\bar \partial_J u$ is always anti-holomorphic, because
\begin{equation*}
  J(u)\cdot \bar\partial_J u = - Du - J(u)\cdot Du\cdot i
  = \bigl(Du\cdot i - J(u)\cdot Du\bigr) \cdot i =
  - \bigl(\bar\partial_J u\bigr)\cdot i \;,
\end{equation*}
and for analytical reasons we will only consider sections in $\Hom
\bigl(T\Sigma, u^* TW\bigr)$ taking values in the subbundle
$\overline{\Hom_\CC} \bigl(T\Sigma, u^* TW\bigr)$ of anti-holomorphic
homomorphisms.
We denote the subbundle of sections taking values in
$\overline{\Hom_\CC} \bigl(T\Sigma, u^* TW\bigr)$ by $\bar
\eE_\CC\bigl(\DD^2; N\bigr)$.

\subsection{The expected dimension of $\widetilde\mM\bigl(\DD^2, N;
  J\bigr)$}\label{sec: expected dimension}

The rank of $\bar \eE_\CC\bigl(\DD^2; N\bigr)$ and the dimension of
$\bB\bigl(\DD^2; N\bigr)$ are both infinite, hence we cannot compute
the expected dimension of the solution space
$\widetilde\mM\bigl(\DD^2, N; J\bigr)$ as in the finite dimensional
case, where it was just the difference $\dim M - \rank E$.
Nonetheless we can associate a so called Fredholm index to a
Cauchy-Riemann problem.
We will later give some more details about how the index is actually
defined, for now we just note that it is an integer that determines
the expected dimension of the space $\widetilde\mM\bigl(\DD^2, N;
J\bigr)$.
For a Cauchy-Riemann problem with totally real boundary condition the
index has an easy explicit formula (see for example
\cite[Theorem~C.1.10]{McDuffSalamonJHolo}) that simplifies in our
specific case of holomorphic disks to
\begin{equation}
  \ind_u \bar \partial_J = \frac{1}{2}\, \dim W
  + \mu\bigl(u^* TW, u^* TN\bigr) \;,
  \label{eq: index formula}
\end{equation}
where we have used that the Euler characteristic of a disk is
$\chi\bigl(\DD^2\bigr) = 1$.

\begin{remark}
  We would like to warn the reader that the dimension of a moduli
  space of holomorphic disks or holomorphic spheres tends to increase,
  if we increase the dimension of the symplectic ambient manifold.
  Unfortunately, the opposite is true for a higher genus
  curve~$\Sigma$: The formula above becomes
  \begin{equation*}
    \ind_u \bar \partial_J = \frac{1}{2}\, \chi(\Sigma) \dim W
    + \mu\bigl(u^* TW, u^* TN\bigr) \;,
  \end{equation*}
  and since the Euler characteristic is negative, and it is harder to
  find curves with genus in high dimensional spaces than in lower
  dimensional ones.
\end{remark}

The Maslov index~$\mu$ is an integer that classifies loops of totally
real subspaces up to homotopy:

\begin{definition}
  Let $E_\CC$ be a complex vector bundle over the closed
  $2$-disk~$\DD^2$ and let $E_\RR$ be a totally real subbundle of
  $\restricted{E_\CC}{\p\DD^2}$ defined only over the boundary of the
  disk.
  The \defin{Maslov index $\mu(E_\CC,E_\RR)$} is an integer that is
  computed by trivializing $E_\CC$ over the disk, and choosing a
  continuous frame $A(e^{i\phi}) \in \GL(n,\CC)$ over the boundary
  $\p\DD^2$ representing $E_\RR$.
  We then set
  \begin{equation*}
    \mu(E_\CC,E_\RR) := \deg \frac{\det A^2}{\det (A^*A)} \;,
  \end{equation*}
  where $\deg(f)$ is the degree of a continuous map $f\colon \SSS^1 \to
  \SSS^1$.
\end{definition}

In these notes, we will compute the Maslov index only once, in
Section~\ref{sec: bishop family}, but note that the index $\ind_u
\bar \partial_J$ depends on the holomorphic disk~$u$ in
$\widetilde\mM\bigl(\DD^2, N; J\bigr)$, we are considering; this
should not confuse us however, because it only means that the space of
disks may have different components and the expected dimensions of the
different components do not need to agree.
We will now briefly explain how the index of $\bar \partial_J$ is
defined.
We have a map $\bar \partial_J\colon \bB\bigl(\DD^2; N\bigr) \to \bar
\eE_\CC\bigl(\DD^2; N\bigr)$, and we need to compute the linearization
of $\bar \partial_J$ at a point of $u \in \bB\bigl(\DD^2; N\bigr)$,
that means, we have to compute the differential
\begin{equation*}
  \bar D_J(u)\colon T_u \bB\bigl(\DD^2; N\bigr) \to T_{\bar \partial_J u} \bar
  \eE_\CC\bigl(\DD^2; N\bigr) \;.
\end{equation*}
To find $\bar D_J(u)$, choose a smooth path~$u_t$ of maps in
$\bB\bigl(\DD^2; N\bigr)$ with $u_0 = u$, then we can regard the image
$\bar \partial_J u_t$, and take its derivative with respect to $t$ in
$t = 0$.
If we set $\dot u_0 = \restricted{\frac{d}{dt}}{t = 0} u_t$, this
allows us to obtain a linear operator~$\bar D_J(u)$ by
\begin{equation*}
  \bar D_J(u)\cdot \dot u_0  = \restricted{\frac{d}{dt}}{t = 0}
  \bar \partial_J u_t \;.
\end{equation*}
It is a good exercise to determine the domain and target space of this
operator, and find a way to describe them.
The index of $\bar \partial_J$ at $u$ is defined as
\begin{equation*}
  \ind_u \bar \partial_J := \dim \ker \bar D_J(u) - \dim \coker \bar D_J(u) \;.
\end{equation*}
It is a remarkable fact that the index is finite and determined by
formula~\eqref{eq: index formula} above.
Also note that the index is constant on each connected component of
$\bB\bigl(\DD^2; N\bigr)$.

\subsection{Transversality of the Cauchy-Riemann problem}\label{sec:
  transversality of CR}

Just as in the finite dimensional analogue, it may happen that the
formal dimension we have computed does not correspond to the dimension
we are observing in an actual situation.
In fact, if the section~$\sigma$ (or in our infinite dimensional case,
$\bar \partial_J$) are not transverse to the $0$-section, there is no
reason why $M$ or $\widetilde\mM\bigl(\DD^2, N; J\bigr)$ would need to
be smooth manifolds at all.
On the other hand, if $\sigma$ is transverse to the $0$-section, then
$M = \sigma^{-1}(0)$ is a smooth submanifold of dimension~$\dim M -
\rank E$, and the analogue result is also true for the Cauchy-Riemann
problem:
If $\bar \partial_J$ is at every point of $\widetilde\mM\bigl(\DD^2,
N; J\bigr)$ transverse to $0$ (or said equivalently, if the cokernel
of the linearized operator is trivial for every holomorphic disk),
then $\widetilde\mM\bigl(\DD^2, N; J\bigr)$ will be a smooth manifold
whose dimension is given by the index of $\bar \partial_J$.
In the finite dimensional situation, we can often achieve
transversality by a small perturbation of $\sigma$, but of course,
this might require a subtle analysis of the situation, when we want to
perturb $\sigma$ only within a space of sections satisfying certain
prescribed properties.

\begin{definition}
  Let $u\colon \Sigma \to W$ be a holomorphic map from a Riemann
  surface with or without boundary.
  We call $u$ \defin{somewhere injective}, if there exists a point
  $z\in \Sigma$ with $du_z \ne 0$, and such that $z$ is the only point
  that is mapped by $u$ to $u(z)$, that means,
  \begin{equation*}
    u^{-1}\bigl(u(z)\bigr) = \{z\} \;.
  \end{equation*}
  \index{holomorphic map!somewhere injective}
\end{definition}

We call a holomorphic curve that is not the multiple cover of any
other holomorphic curve a \defin{simple holomorphic curve}.
Closed simple holomorphic curves are somewhere injective,
\cite[Proposition~2.5.1]{McDuffSalamonJHolo}.  \index{holomorphic
  curve!simple}
It is a non-trivial result that by perturbing the almost complex
structure~$J$, we can achieve transversality of the Cauchy-Riemann
operator for every \emph{somewhere injective} disk in $W$ with
boundary in a totally real submanifold~$N$.
We could hope that this theoretical result would be sufficient for us,
because the considered disks are injective along their boundaries, but
we have chosen a very specific almost complex structure in
Section~\ref{sec: local_model}, and perturbing this $J$ would destroy
the results obtained in that section.
Below, we will prove by hand that $\bar \partial_J$ is transverse to
$0$ for the holomorphic disks in our model neighborhood.

\begin{remark}
  Note that often it is not possible to work only with somewhere
  injective holomorphic curves, and perturbing $J$ will in that case
  not be sufficient to obtain transversality for holomorphic curves.
  Sometimes one can work around this problem by requiring that $W$ is
  semi-positive, see Section~\ref{sec: compactness}.
  Unfortunately, there are many situations where this approach won't
  work either, as is the case of SFT, where transversality has been
  one of the most important outstanding technical problems.
\end{remark}

\subsection{The Bishop family}\label{sec: bishop family}

In this section, we will show that the disks that we have found in
Section~\ref{sec: elliptic singularity high dimension local model}
lying in the model neighborhood are regular solutions of the
Cauchy-Riemann problem.
Before starting the actual proof of our claim, we will briefly
recapitulate the situation described in Section~\ref{sec: elliptic
  singularity high dimension local model}.
Let $(W, J)$ be an almost complex manifold of dimension~$2n$ with
boundary that contains a model neighborhood~$U$ of the desired form.
Remember that $U$ was a subset of $\CC^2 \times T^*L$ with almost
complex structure $i\oplus J_L$, that we had a function~$f\colon \CC^2
\times T^*L \to [0,\infty)$ given by
\begin{equation*}
  f(z_1,z_2, \bfQ, \bfP) = \frac{1}{2}\,\bigl(\abs{z_1}^2 + \abs{z_2}^2\bigr)
  + f_L(\bfQ, \bfP) \;,
\end{equation*}
and that the model neighborhood~$U$ was the subset
\begin{equation*}
  U := \bigl\{ (z_1,z_2; \bfQ, \bfP)\bigm|\,
  \RealPart(z_2) \ge 1 - \delta \bigr\} \cap
  f^{-1}\bigl([0,1/2]\bigr) \;.
\end{equation*}
The totally real manifold~$N$ is the image of the map
\begin{equation*}
  (z; \bfQ) \in \DD_\epsilon^2 \times L \mapsto
  \Bigl(z, \sqrt{1-\abs{z}^2}; \bfQ, \0\Bigr)\subset \p U \;.
\end{equation*}
For every pair~$(s,\bfQ) \in [1-\delta, 1)\times L$, we find a
holomorphic map of the form
\begin{equation*}
  \begin{split}
    u_{s,\bfQ}\colon & \bigl(\DD^2,\p \DD^2\bigr) \to U \\
    & z \mapsto \bigl(C_s z, s; \bfQ, \0)
  \end{split}
\end{equation*}
with $C_s = \sqrt{1 - s^2}$.
We call this map a \defin{(parametrized) Bishop disk}, and we call the
collection of these disks, the \defin{Bishop family}.
Sometimes we will not be precise about whether the disks are
parametrized or not, and whether we speak about disks with or without
a marked point (see Section~\ref{sec: moduli space marked point}), but
we hope that in each situation it will be clear what is meant.
To check that a given Bishop disk~$u_{s,\bfQ}$ is regular, we will
first compute the index of the linearized Cauchy-Riemann operator that
gives us the expected dimension for the space of holomorphic disks
containing the Bishop family.
Note that the observed dimension is $1 + \dim L + 3 = 1 + (n-2) + 3 =
n + 2$.
The first part, $1 + \dim L$ corresponds to the $s$- and
$\bfQ$-parameters of the family; the three corresponds to the
dimension of the group of Möbius transformations acting on the complex
unit disk:
If $u_{s,\bfQ}$ is a Bishop disk, and if $\varphi\colon \DD^2 \to
\DD^2$ is a Möbius transformation, then of course $u_{s,\bfQ}\circ
\varphi$ will also be a holomorphic map with admissible boundary
condition.
On the other hand we showed in Corollary~\ref{cor: hol curves in
  neighborhood elliptic sing high dim} that every holomorphic disk
that lies in $U$ is up to a Möbius transformation one of the Bishop
disks.
For the index computations, it suffices by Section~\ref{sec: expected
  dimension} to trivialize the bundle $E_\CC := u_{s,\bfQ}^* TW$ over
$\DD^2$, and study the topology of the totally real subbundle $E_\RR =
u_{s,\bfQ}^* TN$ over $\p\DD^2$.
Before starting any concrete computations, we will significantly
simplify the setup by choosing a particular chart:
Note that the $T^*L$-part of a Bishop disk~$u_{s,\bfQ}$ is constant,
we can hence choose a chart diffeomorphic to $\RR^{2n-4} =
\{(x_1,\dotsc,x_{n-2}; y_1,\dotsc,y_{n-2})\}$ for $T^*L$ with the
properties
\begin{itemize}
\item the point~$(\bfQ, \0)$ corresponds to the origin,
\item the almost complex structure~$J_L$ is represented at the origin
  by the standard~$i$,
\item the intersections of the $0$-section~$L$ with the chart
  corresponds to the subspace $(x_1,\dotsc,x_{n-2}; 0,\dotsc,0)$.
\end{itemize}
In the chosen chart, we write~$u_{s,\bfQ}$ as
\begin{equation*}
  u_{s,\bfQ}(z) = \bigl(C_s z, s; 0,\dotsc, 0\bigr)
  \in \CC^2 \times \RR^{2n-4}
\end{equation*}
with $C_s = \sqrt{1-s^2}$.
By our assumption, the complex structure on the second factor is at
the origin of $\RR^{2n-4}$ equal to $i$, and there is then a direct
identification of $u_{s,\bfQ}^* TW$ with $\CC^2 \times \CC^{n-2}$.
The submanifold~$N$ corresponds in the chart to
\begin{equation*}
  \bigl\{(z_1,z_2;x_1,\dotsc,x_{n-2}, 0,\dotsc,0) \in
  \CC^2\times \RR^{2n-4}
  \bigm|\, \ImaginaryPart{z_2} = 0,\,
  \abs{z_1}^2 + \abs{z_2}^2  = 1 \bigr\}\;.
\end{equation*}
The boundary of $u_{s,\bfQ}$ is given by $e^{i\varphi} \mapsto
\bigl(\sqrt{1-s^2}\,e^{i\varphi}, s; 0,\dotsc, 0\bigr)$, and the
tangent space of $TN$ over this loop is spanned over $\RR$ by the
vector fields
\begin{equation*}
  \Bigl(ie^{i\varphi}, 0; 0, \dotsc, 0\Bigr),\,
  \Bigl(-\frac{s}{\sqrt{1-s^2}} \, e^{i\varphi}, 1; 0, \dotsc, 0\Bigr),\, 
  \bigl(0,0; 1,0,\dotsc,0\bigr),\dotsc,
  \bigl(0,0; 0,\dotsc,0,1,0,\dotsc,0\bigr) \;.
\end{equation*}
We can now easily compute the Maslov index $\mu(E_\CC,E_\RR)$ as
\begin{equation*}
  \deg \frac{\det A^2}{\det (A^*A)} = \deg \frac{-e^{2i\varphi}}{1} = 2 \;,
\end{equation*}
where $A$ is the matrix composed by the vector fields given above.
Hence we obtain for the index
\begin{equation*}
  \ind_u \bar \partial_J = \frac{1}{2}\, \dim W
  + \mu\bigl(u_{s,\bfQ}^* TW, u_{s.\bfQ}^* TN\bigr) = n + 2 \;,
\end{equation*}
which corresponds to the observed dimension computed above.
We will now show that the linearized operator~$\bar D_J$ is
surjective.
We do not do this directly, but we compute instead the dimension of
its kernel, and show that it is equal (and not larger than) the
Fredholm index.
From the definition of the index
\begin{equation*}
  \ind_u \bar \partial_J := \ker \bar D_J(u) - \coker \bar D_J(u) \;,
\end{equation*}
we see that the cokernel needs to be trivial, and this way the
surjectivity result follows.
We now compute the linearized Cauchy-Riemann operator at a Bishop
disk~$u_{s,\bfQ}$.
Let $v_t$ be a smooth family of maps
\begin{equation*}
  v_t\colon \bigl(\DD^2, \p \DD^2\bigr) \to (U, N)
\end{equation*}
with $v_0 = u_{s,\bfQ}$ (think of each $v_t$ as a smooth map, but for
an analytically correct study, we would need to allow here for Sobolev
maps).
In this chart, we can write the family~$v_t$ as
\begin{equation*}
  v_t(z) = \bigl(z_1(z, t), z_2(z, t); \bfx(z,t), \bfy(z,t)\bigr)
  \in \CC^2 \times \RR^{2n-4}\;,
\end{equation*}
where we have set $\bfx(z,t) = \bigl(x_1(z, t), \dotsc, x_{n-2}(z,
t))$ and $\bfy(z,t) = \bigl(y_1(z, t), \dotsc, y_{n-2}(z, t))$, and we
require that the boundary of each of the $v_t$ has to lie in $N$.
When we now take the derivative of $v_t$ with respect to $t$ at $t=0$,
we obtain a vector in $T_{u_{s,\bfQ}} \bB$ that is represented by a
map
\begin{equation*}
  \dot v_0\colon \DD^2 \to \CC^2 \times \RR^{2\,(n-2)}, z\mapsto
  \bigl(\dot z_1(z), \dot z_2(z); \dot \bfx(z), \dot \bfy(z)\bigr)
\end{equation*}
with boundary conditions $\dot \bfy (z) = \0$ and $\ImaginaryPart \dot
z_2(z) = 0$ for every $z\in \p \DD^2$.
Furthermore taking the derivative of $\abs{z_1(z,t)}^2 +
\abs{z_2(z,t)}^2 = 1$ for every $z\in \p \DD^2$ with respect to $t$,
we obtain the condition
\begin{equation*}
  \bar z_1(z, 0) \cdot \dot z_1(z) + z_1(z, 0) \cdot \dot {\bar z}_1(z)
  + \bar z_2(z, 0) \cdot \dot z_2(z) + z_2(z, 0) \cdot \dot {\bar z}_2(z)
  = 0 \;,
\end{equation*}
which simplifies by using the explicit form of $\bigl(z_1(z, 0),
z_2(z, 0)\bigr)$ to
\begin{equation*}
  C_s \bar z \cdot \dot z_1(z) + C_s z \cdot \dot {\bar z}_1(z)
  + s  \dot z_2(z) + s  \dot {\bar z}_2(z)
  = 0
\end{equation*}
for every $z \in \p \DD^2$.
The linearization of the Cauchy-Riemann operator~$\bar\partial_J$ at
$u_{s,\bfQ}$  given by
\begin{equation*}
  \bar D_J \cdot \dot v_0 := \restricted{\frac{d}{dt}}{t=0} \bar \partial_J v_s
\end{equation*}
decomposes into the $\CC^2$-part
\begin{equation*}
   \bigl( i d\dot z_1 - d\dot z_1 i,\, i d\dot z_2 - d\dot z_2 i \bigr)
\end{equation*}
and the $\RR^{2\,(n-2)}$-part
\begin{equation*}
  \restricted{\frac{d}{dt}}{t=0} \Bigl( J_L\bigl(\bfx(z,t), \bfy(z,t)\bigr) \cdot
  \bigl(d \bfx(z,t), d \bfy(z,t)\bigr)
  - \bigl(d \bfx(z,t)\cdot i, d \bfy(z,t)\cdot i\bigr)  \Bigr) \;.
\end{equation*}
The second part can be significantly simplified by using first the
product rule, and applying then that $\bfx(z,0) = \0$ and
$\bfy(z,0)=\0$ are constant so that their differentials vanish.
We obtain then
\begin{equation*}
  J_L\bigl(\0, \0\bigr) \cdot \bigl(d \dot\bfx, d\dot\bfy\bigr)
  - \bigl(d \dot \bfx\cdot i, d \dot \bfy\cdot i\bigr) \;,
\end{equation*}
and using that $J_L(\0, \0) = i$, it finally reduces to
\begin{equation*}
  \bigl( d \dot \bfy - d\dot \bfx \cdot i,
  -  d\dot \bfx  - d\dot \bfy \cdot i \bigr) \;.
\end{equation*}
We have shown that linearized Cauchy-Riemann operator simplifies for
all coordinates to the standard Cauchy-Riemann operator, so that if
$\dot v_0(z) = \bigl(\dot z_1(z), \dot z_2(z);\dot \bfx(z), \dot
\bfy(z)\bigr)$ lies in the kernel of $\bar D_J$ then the coordinate
functions $\dot z_1(z), \dot z_2(z)$ and $\dot \bfx(z) + i \dot
\bfy(z)$ need all to be holomorphic in the classical sense.
Now using the boundary conditions, we easily deduce that $\dot\bfy(z)$
needs to vanish, because it is a harmonic function, and it takes both
maximum and minimum on $\p\DD^2$.
A direct consequence of $\dot\bfy \equiv \0$ and the Cauchy-Riemann
equation is that $\dot\bfx(z)$ will be everywhere constant.
We get the analogous result for the function $\dot z_2(z)$, so that we
can write
\begin{equation*}
  \dot v_0(z) = \bigl(\dot z_1(z), \dot s;\, \dot \bfQ_0, \0\bigr) \;,
\end{equation*}
where $\dot s$ is a real constant, and $\dot \bfQ_0$ is a fixed vector
in $\RR^{2\,(n-2)}$, and we only need to still understand the
holomorphic function $\dot z_1(z)$.
The boundary condition for $\dot z_1(z)$ is $\bar z \cdot \dot z_1(z)
+ z \cdot \dot {\bar z}_1(z) = - \frac{2\,s \dot s}{C_s}$ for every
$z\in \p\DD^2$.
Using that the function $\dot z_1(z)$ is holomorphic, we can write it
as power series in the form
\begin{equation*}
  \dot z_1(z) = \sum_{k=0}^\infty a_k\, z^k
\end{equation*}
and we get at  $e^{i\varphi}\in \p\DD^2$
\begin{equation*}
  \dot z_1\bigl(e^{i\varphi}\bigr) =
  \sum_{k=0}^\infty a_k\, e^{ik\varphi} \;.
\end{equation*}
Plugging these series into the equation of the boundary condition, we
find
\begin{equation*}
  e^{-i\varphi} \cdot \sum_{k=0}^\infty a_k\, e^{ik\varphi}
  + e^{i\varphi} \cdot \sum_{k=0}^\infty \bar a_k\, e^{-ik\varphi}
  = - \frac{2\,s \dot s}{C_s}
\end{equation*}
so that
\begin{equation*}
  \sum_{k=0}^\infty \bigl(a_k\, e^{(k-1)\,i\varphi}
  + \bar a_k\, e^{-(k-1)\, i \varphi} \bigr)
  = - \frac{2\,s \dot s}{C_s}
\end{equation*}
and by comparing coefficients we see that
\begin{equation*}
  a_1 + \bar a_1
  = - \frac{2\,s \dot s}{C_s}, \quad
  a_0
  + \bar a_2 = 0, \quad
  a_k = 0 \text{ for all $k\ge 3$}.
\end{equation*}
This means that the three (real) parameters we can choose freely are
$z_0$ and $\ImaginaryPart z_1$.
Concluding, we have found that the dimension of the kernel of $\bar
D_J$ is equal to $3 + 1 + n-2 = n + 2$ which corresponds to the
Fredholm index of our problem.
Thus there is no need to perturb $J$ on the neighborhood of the Bishop
family to obtain regularity.

\begin{corollary}\label{cor: existence adapted almost complex}
  Let $(W, \omega)$ be a compact symplectic manifold that is a weak
  symplectic filling of a contact manifold $(M, \xi)$.
  Suppose that $N$ is either a \LOB or a \BLOB in $M$, then we can
  choose close to the binding and to the boundary of $N$ the almost
  complex structure described in the previous sections, and extend it
  to an almost complex structure~$J$ that is tamed by $\omega$, whose
  bundle of complex tangencies along $M$ is $\xi$ and that makes $M$
  $J$-convex.
  By a generic perturbation away from the binding and the boundary of
  $N$, we can achieve that all somewhere injective holomorphic curves
  become regular.
  We call a $J$ with these properties an \defin{almost complex
    structure adapted to $N$}.  \index{almost complex
    structure!adapted to a \LOB/\BLOB}
\end{corollary}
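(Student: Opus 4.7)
The plan is to build $J$ in two stages, local then global, using the contractibility of the space of $\omega$-tame almost complex structures making $M$ convex with complex tangencies $\xi$, and then to invoke a standard transversality theorem away from the regions where the local model has been fixed.

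First I would fix the local model near the singular set of the Legendrian foliation $\fF$ on $N$. Near each elliptic singularity component of $\sing(\fF)$, Theorem~\ref{theorem: Legendrian foliation determines contact germ} gives a contactomorphism between a neighborhood in $(M,\xi)$ and the boundary of the model $\CC^2\times T^*L$ described in Section~\ref{sec: local_model}, sending $N$ to the submanifold considered there. A similar local identification is available near each codimension~$1$ singular stratum (the boundary of a \BLOB). By pulling back the model almost complex structure $i\oplus J_L$ (respectively $i\oplus i\oplus J_F$) through these identifications, and by adjusting the collar parametrization so that the chosen $J$-plurisubharmonic function for the model agrees with a local equation of $M$, we obtain on open neighborhoods $U_{\mathrm{sing}}$ of $\sing(\fF)$ a pre-assigned almost complex structure $J_0$ that tames the conformal symplectic structure on $\xi$, makes $M$ locally $J_0$-convex with complex tangencies $\xi$, and for which the previous sections' results about holomorphic disks apply.

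Next I would extend $J_0$ to a global almost complex structure on $W$ with the required properties. The key input is the theorem quoted in Section~\ref{sec: omega-convexity} characterizing weak symplectic fillings: an $\omega$-tame $J$ making $M$ convex with complex tangencies $\xi$ exists, and moreover the space $\jJ(W,\omega,\xi)$ of such structures is contractible. The same contractibility argument applied \emph{relative to a closed neighborhood} of $U_{\mathrm{sing}}$ shows that the space of extensions of $J_0$ to a member of $\jJ(W,\omega,\xi)$ is non-empty and contractible; concretely, one first extends $J_0$ on a collar of $M$ by the usual construction so that $M$ remains $J$-convex with the correct complex tangencies, and then extends into the interior by a convex combination in the contractible fiber of $\omega$-compatible complex structures. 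This produces the desired $J$ on all of $W$, coinciding with the prescribed local model on $U_{\mathrm{sing}}$.

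Finally, for regularity, I would apply the standard transversality theorem for somewhere injective $J$-holomorphic curves with totally real boundary on $N\setminus\sing(\fF)$ (see \cite{McDuffSalamonJHolo}), but restricted to perturbations of $J$ that are supported away from $U_{\mathrm{sing}}$ and that preserve tameness, $J$-convexity of $M$, and the complex tangencies $\xi$. The set of such admissible perturbations is open and path connected in a suitable Banach manifold of almost complex structures, and the usual argument shows that the universal moduli space is a Banach manifold whose projection to this parameter space is Fredholm; Sard--Smale then yields a comeager set of $J$'s for which all somewhere injective curves are regular. The main technical point, and the one I would expect to be the main obstacle, is ensuring that the class of admissible perturbations is large enough to separate normal directions at every somewhere injective point while still being confined to $W\setminus U_{\mathrm{sing}}$; this is handled by observing that somewhere injective curves must have an injective point \emph{outside} any fixed open set $U_{\mathrm{sing}}$ of small measure (otherwise the curve would be contained in $U_{\mathrm{sing}}$ by the local analysis of Section~\ref{sec: local_model}, and hence factor through the Bishop family, contradicting somewhere injectivity of a non-Bishop curve or being trivially regular if it is a Bishop disk by the computation in Section~\ref{sec: bishop family}).
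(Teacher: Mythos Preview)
Your proposal is correct and follows essentially the same line as the paper, which merely sketches: the Bishop disks are already regular by the explicit computation in Section~\ref{sec: bishop family}, and every other simple curve lies entirely outside the model neighborhoods, so perturbing $J$ only there suffices. One small sharpening: your final paragraph frames the issue as finding an injective point outside $U_{\mathrm{sing}}$ ``of small measure'', but the cleaner statement (and the one the paper uses) is the dichotomy coming directly from Propositions~\ref{prop: no hol curve may enter model neighborhood from outside} and~\ref{lemma: complexStructureBoundaryBLOB}: any holomorphic curve that meets $U_{\mathrm{sing}}$ is \emph{entirely contained} in it and hence is a Bishop disk (regular) or constant, so every other somewhere injective curve is disjoint from $U_{\mathrm{sing}}$ altogether --- no measure-theoretic smallness is needed, and there is no tension with somewhere injectivity since Bishop disks are themselves embedded.
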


The argument in the proof of the corollary above is that the Bishop
disks are already regular, and that all other simple holomorphic
curves have to lie outside the neighborhood where we require an
explicit form for $J$.
Thus it suffices to perturb outside these domains to obtain regularity
for every other simple curve.

\section{The moduli space of holomorphic disks with a marked
  point}\label{sec: moduli space marked point}

Until now, we only have studied the space of certain $J$-holomorphic
\emph{maps}
\begin{equation*}
  \widetilde \mM \bigl(\DD^2,  N; J\bigr) =
  \bigl\{u\colon  \DD^2 \to W \bigm|\,
  \text{$\bar\partial_J u = 0$ and $u(\p \DD^2) \subset N$} \bigr\} \;,
\end{equation*}
but many maps correspond to different parametrizations of the same
geometric disk.
To get rid of this ambiguity (and to obtain compactness), we quotient
the space of maps by the biholomorphic reparametrizations of the unit
disk, that means, by the Möbius transformations, but we will also add
a marked point $z_0\in \DD^2$ to preserve the structure of the
geometric disk.
To simplify the notation, we will also omit the almost complex
structure~$J$ in $\widetilde \mM \bigl(\DD^2, N\bigr)$.
From now on let 
\begin{equation*}
  \widetilde \mM \bigl(\DD^2,  N; z_0\bigr) =
  \bigl\{(u, z_0) \bigm|\,
  \text{$z_0\in \DD^2$, $\bar\partial_J u = 0$ and
    $u(\p \DD^2) \subset N$} \bigr\}
  = \widetilde \mM \bigl(\DD^2, N\bigr) \times \DD^2
\end{equation*}
be the space of holomorphic maps together with a special point $z_0\in
\DD^2$ that will be called the \defin{marked point}. \index{marked point}
The \defin{moduli space} we are interested in is the space of
equivalence classes
\begin{equation*}
  \mM \bigl(\DD^2,  N; z_0\bigr) =
  \widetilde\mM \bigl(\DD^2,  N; z_0\bigr) / \sim
\end{equation*}
where we identify two elements~$(u,z_0)$ and $(u',z_0')$, if and only
if there is a biholomorphism $\varphi\colon \DD^2 \to \DD^2$ such that
$u = u'\circ \varphi^{-1}$ and $z_0 = \varphi(z_0')$.
The map $(u,z) \mapsto u(z)$ descends to a well defined map
\begin{equation*}
  \begin{split}
    \ev\colon \mM \bigl(\DD^2, N; z_0\bigr) & \to W \\
    [u,z_0] & \mapsto u(z_0)
  \end{split}
\end{equation*}
on the moduli space, which we call the \defin{evaluation
  map}. \index{map!evaluation}
Let $N$ be a \LOB or a \BLOB, and assume that $B_0$ is one of the
components of the binding of $N$.
Since this is the only situation, we are really interested in in these
notes, we introduce the notation $\widetilde \mM_0 \bigl(\DD^2,
N\bigr)$ for the connected component in $\widetilde \mM \bigl(\DD^2,
N\bigr)$ that contains the Bishop family around $B_0$.
When adding a marked point, we write $\widetilde \mM_0 \bigl(\DD^2, N;
z_0\bigr)$ and $\mM_0 \bigl(\DD^2, N; z_0\bigr)$ for the corresponding
subspaces.
It is easy to see that $\mM_0 \bigl(\DD^2, N; z_0\bigr)$ is a smooth
(non-compact) manifold with boundary.
Note first that $\widetilde \mM_0 \bigl(\DD^2, N; z_0\bigr)$ is also a
smooth and non-compact manifold with boundary:
If $J$ is regular, we know that $\widetilde \mM_0 \bigl(\DD^2,
N\bigr)$ is a smooth manifold, and so the boundary of the product
manifold~$\widetilde \mM_0 \bigl(\DD^2, N; z_0\bigr)$ is
\begin{equation*}
  \p \widetilde \mM_0 \bigl(\DD^2,  N; z_0\bigr) =
  \widetilde \mM_0 \bigl(\DD^2,  N\bigr) \times \p \DD^2 \;.
\end{equation*}
Passing to the quotient preserves this structure, because the boundary
of the maps in $\widetilde \mM_0 \bigl(\DD^2, N\bigr)$ intersects each
of the pages of the open book exactly once (this is a consequence of
Corollary~\ref{cor: boundary of curves transverse to Legendrian
  foliation} and Section~\ref{sec: elliptic singularity high dimension
  local model} of Chapter~\ref{chapter: J-holomorphic curves}), and
hence each of the disks is injective along its boundary.
The only Möbius transformation that preserves the boundary pointwise
is the identity, hence it follows that the group of Möbius
transformations acts smoothly, freely and properly on $\widetilde
\mM_0 \bigl(\DD^2, N; z_0\bigr)$, and hence the quotient will be a
smooth manifold of dimension
\begin{equation*}
  \dim \mM_0 \bigl(\DD^2, N; z_0\bigr) =
  \dim \widetilde \mM_0 \bigl(\DD^2, N; z_0\bigr) - 3
  =  \ind_u \bar \partial_J + 2 - 3 = n + 1 \;.
\end{equation*}
As before the points on the boundary of $\mM_0 \bigl(\DD^2, N;
z_0\bigr)$ are the classes $[u,z]$ with $z\in \p\DD^2$.
It is also clear that the evaluation map $\ev_{z_0} \colon \mM_0
\bigl(\DD^2, N; z_0\bigr) \to W$ is smooth.
Remember that the Bishop disks contract to points as they approach the
binding~$B_0$.
We will show that we incorporate $B_0$ into the moduli space~$\mM_0
\bigl(\DD^2, N; z_0\bigr)$ and that the resulting space carries a
natural smooth structure that corresponds to the intuitive picture of
disks collapsing to one point.
The neighborhood of the binding~$B_0$ in $W$ is diffeomorphic to the
model
\begin{equation*}
  U = \bigl\{(z_1,z_2; \bfQ, \bfP)\in \CC^2 \times T^*B_0
  \bigm|\, \RealPart (z_2) > 1 - \delta\bigr\}
  \cap h^{-1}\bigl((-\infty, 1/2]\bigr)
\end{equation*}
for small $\delta > 0$ with the function
\begin{equation*}
  h(z_1,z_2) = \frac{1}{2}\,
  \bigl(\abs{z_1}^2 + \abs{z_2}^2\bigr) + f_{B_0}(\bfQ, \bfP) \;,
\end{equation*}
see Section~\ref{sec: elliptic singularity high dimension local
  model}.
The content of Proposition~\ref{prop: no hol curve may enter model
  neighborhood from outside} and of Corollary~\ref{cor: hol curves in
  neighborhood elliptic sing high dim} is that for every point
\begin{equation*}
  (z, s; \bfQ_0, \0) \in U
\end{equation*}
with $s \in (1-\delta, 1)$ and $\bfQ_0$ in the $0$-section of
$T^*B_0$,
\begin{itemize}
\item there is up to a Möbius transformation a unique holomorphic map
  $u\in \widetilde \mM_0 \bigl(\DD^2, N\bigr)$ containing that point
  in its image, and
\item $\widetilde \mM \bigl(\DD^2, N\bigr)$ does not contain any
  holomorphic maps whose image is not entirely contained in $U \cap
  (\CC\times \RR \times B_0)$.
\end{itemize}
As a result, it follows that $V = \ev_{z_0}^{-1} (U)$ is an open
subset of $\mM_0 \bigl(\DD^2, N; z_0\bigr)$, and that the restriction
of the evaluation map
\begin{equation*}
  \restricted{\ev_{z_0}}{V} \colon V \to U
\end{equation*}
is a diffeomorphism onto $U \cap \bigl(\CC \times (1-\delta, 1) \times
B_0\bigr)$.
The closure of this subset is the smooth submanifold
\begin{equation*}
  U \cap \bigl(\CC \times \RR \times B_0\bigr) \;,
\end{equation*}
which we obtain by including the binding $\{0\} \times \{1\} \times
B_0$ of $N$.
Using the evaluation map, we can identify $V$ with its image in $U$,
and this way glue $B_0$ to the moduli space~$\mM_0 \bigl(\DD^2, N;
z_0\bigr)$.
The new space is also a smooth manifold with boundary, and the
evaluation map extends to it, and is a diffeomorphism onto its image
in $U$ so that we can effectively identify $U$ with a subset of the
moduli space.
In particular, it follows that $B_0$ is a submanifold that is of
codimension~$2$ in the boundary of the moduli space.
The aim of the next section will consist in studying the Gromov
compactification of $\mM_0 \bigl(\DD^2, N; z_0\bigr)$.

\section{Compactness}\label{sec: compactness}

Gromov compactness is a result that describes the possible limits of a
sequence of holomorphic curves, and ensures under certain conditions
that every such sequence contains a converging subsequence.
In the limit, a given sequence of holomorphic curves may break into
several components, called \defin{bubbles}, each of which is again a
holomorphic curve.
We will not describe in detail what ``convergence'' in this sense
really means, but we only sketch the idea:
The holomorphic curves in a moduli space can be represented by
holomorphic maps, and in the optimal case, one could hope that by
choosing for each curve in the given sequence a suitable
representative, we might have uniform convergence of the maps, and
this way we would find the limit of the sequence as a proper
holomorphic curve.
Unfortunately, this is usually wrong, but it might be true that for
the correct choice of parametrization we have convergence on
subdomains.
Choosing different reparametrizations, we then obtain convergence on
different domains, and each such domain gives then rise to a bubble,
that means, a holomorphic curve that represents one component of the
Gromov limit.

\begin{theorem}[Gromov compactness]\label{thm: Gromov compactness}
  Let $(W, J)$ be a compact almost complex manifold (with or without
  boundary), and assume that $J$ is tamed by a symplectic form~$\omega$.
  Let $L$ be a compact totally real submanifold.
  Choose a sequence of $J$-holomorphic maps $u_k\colon (\DD^2, \p
  \DD^2) \to (W,L)$ whose \defin{$\omega$-energy}
  \begin{equation*}
    E(u_k) := \int_{\DD^2} u_k^* \omega
  \end{equation*}
  is bounded by a constant~$C > 0$.
  Then there is a subsequence of $\bigl(u_{k_l}\bigr)_l$ that
  converges in the Gromov sense to a bubble tree composed of a finite
  family of non-constant holomorphic disks $u_\infty^{(1)}, \dotsc,
  u_\infty^{(K)}$ whose boundary lies in $L$, and a finite family of
  non-constant holomorphic spheres $v_\infty^{(1)}, \dotsc,
  v_\infty^{(K')}$.
  The total energy is preserved so that
  \begin{equation*}
    \lim_{l\to \infty} E\bigl(u_{k_l}\bigr)
    = \sum_{j=1}^K E \bigl(u_\infty^{(j)}\bigr) +
    \sum_{j=1}^{K'} E \bigl(v_\infty^{(j)}\bigr) \;.
  \end{equation*}
  If each of the disks~$u_k$ is equipped with a marked point~$z_k \in
  \DD^2$, then after possibly reducing to a another subsequence, there
  is a marked point~$z_\infty$ on one of the components of the bubble
  tree such that $\lim_k z_k = z_\infty$ in a suitable sense.
\end{theorem}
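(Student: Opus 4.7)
The plan is to follow the standard bubbling analysis for $J$-holomorphic curves with totally real boundary, adapted to disks. The core analytic input is an \emph{$\epsilon$-regularity} statement: there exists $\hbar > 0$ (depending on $(W,J,\omega,L)$) such that any $J$-holomorphic map from a disk or half-disk with $\omega$-energy below $\hbar$ enjoys a uniform $C^1$ bound on a slightly smaller domain. This follows from the standard mean value inequality for $\abs{du}^2$ combined with elliptic bootstrapping, using the totally real boundary condition to reflect across $\p\DD^2$ via a Schwarz-type reflection for pseudoholomorphic maps. With this in hand, define the \emph{bubbling set}
\begin{equation*}
Z := \Bigl\{z \in \DD^2 \Bigm| \liminf_{r\to 0}\, \liminf_{k\to\infty} \int_{D_r(z)\cap \DD^2} u_k^*\omega \ge \hbar \Bigr\},
\end{equation*}
which, by the energy bound $E(u_k) \le C$, is a finite set of cardinality at most $\lfloor C/\hbar\rfloor$.

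Next I would extract the principal component. Away from $Z$, the gradients $\abs{du_k}$ are uniformly bounded on compact subsets by $\epsilon$-regularity, so after passing to a subsequence, Arzel\`a--Ascoli together with elliptic bootstrapping for $\bar\partial_J$ (with totally real boundary condition) yields $C^\infty_{\mathrm{loc}}$-convergence on $\DD^2 \setminus Z$ to a $J$-holomorphic map $u_\infty^{(1)}\colon \DD^2\setminus Z \to W$ with boundary in $L$. The removal of singularities theorem for $J$-holomorphic curves with totally real boundary then extends $u_\infty^{(1)}$ across each puncture $z\in Z$, yielding a genuine holomorphic disk.

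At each bubble point $z \in Z$ I would perform Hofer's rescaling trick: choose a sequence $(\zeta_k)$ with $\zeta_k \to z$ where $\abs{du_k(\zeta_k)} =: R_k \to \infty$, rescale by $u_k \circ \phi_k$ with $\phi_k(w) = \zeta_k + w/R_k$, and pass to a limit. Depending on whether $z$ is an interior point (in which case the rescaled domains exhaust $\CC$ and the limit is a non-constant sphere after removing the singularity at $\infty$) or a boundary point with $R_k\cdot d(\zeta_k,\p\DD^2)$ staying bounded (yielding a disk bubble), one obtains a new non-constant bubble. Iterating this extraction at each bubble point and on each newly produced bubble one gets by the energy quantization $E(\cdot) \ge \hbar$ a procedure that terminates after finitely many steps, producing the finite bubble tree $u_\infty^{(1)},\dotsc,u_\infty^{(K)}$ and $v_\infty^{(1)},\dotsc,v_\infty^{(K')}$.

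The main obstacle --- and the most delicate step --- is proving \emph{no energy loss in the necks}, i.e., the identity
\begin{equation*}
\lim_{l\to\infty} E\bigl(u_{k_l}\bigr) = \sum_j E\bigl(u_\infty^{(j)}\bigr) + \sum_j E\bigl(v_\infty^{(j)}\bigr).
\end{equation*}
This requires a careful annular/cylindrical analysis: on each neck region $A_k$ around a bubble point between the macroscopic scale and the microscopic rescaling scale, one shows via a monotonicity argument and the isoperimetric inequality on long holomorphic cylinders with small energy that $\int_{A_k} u_k^*\omega \to 0$. The totally real boundary condition is handled by doubling across $L$, which is possible because $L$ is totally real and the taming form $\omega$ controls the derivative. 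Finally, for the marked point statement, since each $z_k \in \DD^2$ lies in a compact set, a further subsequence converges to some $z_\infty \in \DD^2$; if $z_\infty$ lies outside the bubbling set it corresponds to a point on the principal component $u_\infty^{(1)}$, and if $z_\infty \in Z$ one reads off from the rescaling the appropriate point on the bubble emerging at $z_\infty$ (possibly $\infty$ on a sphere, which is then identified with the nodal point on the adjacent component).
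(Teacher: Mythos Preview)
The paper does not prove this theorem; it is stated as a standard result, with only a brief remark afterward about the energy quantum~$\hbar$ (citing \cite[Proposition~4.1.4]{McDuffSalamonJHolo}) to explain why the number of bubbles is finite, and is then used as a black box. Your outline is a reasonable sketch of the standard bubbling analysis and is essentially what one finds in the references the paper defers to.

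One small caution: the phrase ``doubling across $L$'' is imprecise in the non-integrable setting --- there is no global Schwarz reflection across a totally real submanifold for a generic almost complex structure~$J$. The actual boundary regularity, $\epsilon$-regularity near $\p\DD^2$, and neck analysis are carried out in local coordinates adapted to $L$ in which $J$ agrees with the standard $i$ along $L$ (so that a reflection is available to first order), not via a genuine doubling of the target. Apart from this, the structure of your argument ($\epsilon$-regularity, finite bubbling set, Hofer rescaling, removal of singularities, annular energy decay for energy conservation, and tracking the marked point through the rescalings) matches the textbook approach.
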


The $\omega$-energy is fundamental in the proof of the compactness
theorem to limit the number of possible bubbles:
By \cite[Proposition~4.1.4]{McDuffSalamonJHolo}, there exists in the
situation of Theorem~\ref{thm: Gromov compactness} a constant~$\hbar >
0$ that bounds the energy of every holomorphic sphere or every
holomorphic disk $u_k\colon (\DD^2, \p \DD^2) \to (W,L)$ from below.
Since every bubble needs to have at least an $\hbar$-quantum of
energy, and since the total energy of the curves in the sequence is
bounded by $C$, the limit curve will never break into more than
$C/\hbar$ bubbles (the upper bound of the energy is also used to make
sure that each bubble is a compact surface).
We will show in the rest of this section that we can apply Gromov
compactness to sequences of holomorphic disks lying in the moduli
space $\mM_0 \bigl(\DD^2, N\bigr)$ studied in the previous section,
and how we can incorporate these limits into $\mM_0 \bigl(\DD^2, N;
z_0\bigr)$ to construct the compactification $\overline{\mM}_0
\bigl(\DD^2, N; z_0\bigr)$.

\begin{proposition}\label{prop: upper energy bound}
  Let $N$ be a \LOB or a \BLOB in the contact boundary~$(M, \xi)$ of a
  symplectic filling~$(W,\omega)$, and assume that we find a contact
  form~$\alpha$ for $\xi$ such that $\restricted{\omega}{TN} =
  \restricted{d\alpha}{TN}$.
  There is a global energy bound~$C > 0$ for all holomorphic disks in
  $\widetilde \mM_0 \bigl(\DD^2, N\bigr)$.
\end{proposition}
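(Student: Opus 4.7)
\medskip
\noindent\textit{Proof plan.}
The plan is to show that $E(u) = \int_{\partial u}\alpha$ on $\widetilde{\mM}_0$ via a relative Stokes argument, and then to bound the right-hand side uniformly using the open book structure on $N$.

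For the first step, take any two disks $u_0, u_1 \in \widetilde{\mM}_0\bigl(\DD^2, N\bigr)$ connected by a path $u_t$ in the moduli space, and view this path as a single map $H\colon [0,1]\times \DD^2 \to W$ with $H\bigl([0,1]\times \p \DD^2\bigr) \subset N$. Since $\omega$ is closed, Stokes' theorem on the cylinder $[0,1]\times \DD^2$ yields $E(u_1) - E(u_0) = \int_{[0,1]\times \p\DD^2} H^*\omega$; but the integrand on the side is pulled back from $N$, where $\omega = d\alpha$, so a second application of Stokes on the cylinder $H([0,1]\times \p \DD^2)\subset N$ turns this into $\int_{\partial u_1}\alpha - \int_{\partial u_0}\alpha$. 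Consequently the quantity $E(u) - \int_{\partial u}\alpha$ is constant on the connected component $\widetilde{\mM}_0$. To evaluate the constant I would use that, according to Section~\ref{sec: moduli space marked point}, $\widetilde{\mM}_0$ contains the Bishop family and extends across constant disks on the binding $B_0$; at such constant disks both $E(u)$ and $\int_{\partial u}\alpha$ vanish, so the constant must be $0$, and $E(u) = \int_{\partial u}\alpha$ for every $u\in \widetilde{\mM}_0$.

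For the second step, I would use the open book fibration $\vartheta\colon N\setminus B \to \SSS^1$: its fibres are the regular leaves of $\fF$, which are also $\ker \restricted{\alpha}{TN}$, so on $N\setminus B$ one can write $\restricted{\alpha}{TN} = f\,d\vartheta$ for a smooth function $f$. The coorientation conventions force $f>0$, and the explicit formula $\restricted{\alpha}{TN} = r^2\,d\vartheta$ arising from the local model in Section~\ref{sec: local_model} shows that $f$ extends continuously by $0$ across the binding. By compactness of $N$ one therefore has $M := \max_N f < \infty$. Meanwhile, by Corollary~\ref{cor: boundary of curves transverse to Legendrian foliation}, the loop $\partial u$ is positively transverse to $\fF$, so $\vartheta\circ \partial u\colon \SSS^1 \to \SSS^1$ is strictly monotone increasing; its degree is integer-valued and locally constant on $\widetilde{\mM}_0$, hence equals the degree on a Bishop disk, which is $1$. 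Thus $\vartheta\circ \partial u$ is a diffeomorphism of $\SSS^1$, and the change of variable to $\vartheta$ gives
\begin{equation*}
  \int_{\partial u}\alpha \,=\, \int_0^{2\pi} f\bigl(\partial u(\vartheta)\bigr)\,d\vartheta \,\le\, 2\pi M,
\end{equation*}
so in combination with the first step, $E(u) \le 2\pi M$ uniformly on $\widetilde{\mM}_0$.

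I expect the main obstacle to lie in the first step: one must ensure (a) that two elements of $\widetilde{\mM}_0$ are actually joined by a path regular enough to support the double Stokes computation, and (b) that the Bishop family really reaches constant disks on the binding inside the compactification of $\mM_0$, which is exactly what the evaluation-map extension from Section~\ref{sec: moduli space marked point} provides, via the rigidity results of Section~\ref{sec: local_model}. In the \BLOB{} case one additionally needs Proposition~\ref{lemma: complexStructureBoundaryBLOB} to ensure the boundary $\partial u$ stays away from the codimension-$1$ singular set $\p N$, so that the representation $\restricted{\alpha}{TN} = f\,d\vartheta$ is valid along the entire loop $\partial u$.
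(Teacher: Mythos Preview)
Your proposal is correct and follows essentially the same route as the paper: a path from a constant disk at the binding to $u$ combined with a double application of Stokes yields $E(u)=\int_{\partial u}\alpha$, and the open-book representation $\restricted{\alpha}{TN}=f\,d\vartheta$ together with the fact that $\partial u$ meets each page exactly once gives the bound $2\pi\max f$. Your version is a bit more explicit about why the degree of $\vartheta\circ\partial u$ is $1$ and about the \BLOB{} boundary, but the argument is the same.
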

\begin{proof}
  There is a slight complication in our proof, because we may not
  assume that $\omega$ is globally exact, which would allow us to
  obtain the energy of a holomorphic disk by integrating over the
  boundary of the disk.
  To prove the desired statement, proceed as follows: Let $u\colon
  (\DD^2, \p \DD^2) \to (W, N)$ be any element in $\widetilde \mM_0
  \bigl(\DD^2, N\bigr)$.
  By our assumption, there exists a smooth path of maps~$u_t$ that
  starts at the constant map $u_0(z) \equiv b_0 \in B_0$ in the
  binding and ends at the chosen map $u_1 = u$.
  This family of disks may be interpreted as a map from the $3$-ball
  into $W$.
  The boundary consists of the image of $u_1$, and the union of the
  boundary of all disks $\restricted{u_t}{\p \DD^2}$.
  Using Stokes' theorem, we get
  \begin{equation*}
    0 = \int_{[0,1]\times \DD^2} u_t^*d\omega
    = \int_{\DD^2} u_1^*\omega +
    \int_{[0,1] \times \p\DD^2} u_t^*\omega
  \end{equation*}
  so that $E(u) = - \int_{[0,1] \times \p\DD^2} u_t^*\omega$.
  By our assumption, we have a contact form on the contact
  boundary~$M$ for which $\restricted{\omega}{TN} =
  \restricted{d\alpha}{TN}$, so that using Stokes' theorem a second
  time (and that $u_0(z) = b_0$) we get
  \begin{equation*}
    E(u) = \int_{\p\DD^2} u^*\alpha \;.
  \end{equation*}
  The Legendrian foliation on $N$ is an open book whose pages are
  fibers of a fibration $\vartheta\colon N\setminus B \to \SSS^1$.
  Hence the $1$-form~$d\vartheta$ and $\restricted{\alpha}{TN}$ have
  the same kernel, and it follows that there exists a smooth
  function~$f\colon N \to [0,\infty)$ such that
  \begin{equation*}
    \restricted{\alpha}{TN} = f\, d\vartheta \;.
  \end{equation*}
  The function~$f$ vanishes on the binding and on the boundary of a
  \BLOB, and $f$ is hence bounded on $N$ so that we define $C :=
  2\pi\, \max_{x\in N} \abs{f(x)}$.
  Using that the boundary of $u$ intersects every leaf of the open
  book exactly once, we obtain for the energy of $u$ the estimate
  \begin{equation*}
    E(u) = \int_{\p\DD^2} u^*\alpha  \le \max_{x\in N} \abs{f(x)}
    \,  \int_{\p\DD^2}  u^*d\vartheta \le 2\pi \, \max_{x\in N}
    \abs{f(x)} = C \;.  \qedhere
  \end{equation*}
\end{proof}

With the given energy bound, we obtain now Gromov compactness in form
of the following corollary.

\begin{corollary}\label{coro: bubbling for BLOBs}
  Let $N$ be a \LOB or a \BLOB in the contact boundary~$(M, \xi)$ of a
  symplectic filling~$(W,\omega)$, and assume that we find a contact
  form~$\alpha$ for $\xi$ such that $\restricted{\omega}{TN} =
  \restricted{d\alpha}{TN}$.
  Let $(u_k)_k$ be a sequence of holomorphic maps in $\widetilde \mM_0
  \bigl(\DD^2, N\bigr)$.
  There exists a subsequence $\bigl(u_{k_l}\bigr)_l$ that converges either
  \begin{itemize}
  \item uniformly up to reparametrizations of the domain to a
    $J$-holomorphic map $u_\infty \in \widetilde \mM_0 \bigl(\DD^2,
    N\bigr)$,
  \item to a constant disk $u_\infty(z) \equiv b_0$ lying in the
    binding of $N$,
  \item or to a bubble tree composed of a single holomorphic disk
    $u_\infty\colon (\DD^2, \p \DD^2) \to (W, N)$ and a finite family
    of non-constant holomorphic spheres $v_1, \dotsc, v_j$ with $j\ge
    1$.
  \end{itemize}
\end{corollary}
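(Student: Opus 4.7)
The plan is to combine the a priori energy bound from Proposition~\ref{prop: upper energy bound} with the general Gromov compactness theorem (Theorem~\ref{thm: Gromov compactness}) and then use the local models of Section~\ref{sec: local_model} together with a winding count along the open book to rule out unwanted limit configurations. First, Proposition~\ref{prop: upper energy bound} yields a uniform upper bound $C$ on $E(u_k)$, so Theorem~\ref{thm: Gromov compactness} applies: after passing to a subsequence $(u_{k_l})_l$ we obtain a Gromov limit consisting of finitely many non-constant $J$-holomorphic disks $u_\infty^{(1)},\dotsc,u_\infty^{(K)}$ with boundary in $N$ together with non-constant $J$-holomorphic spheres $v_\infty^{(1)},\dotsc,v_\infty^{(K')}$ in $W$. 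I would then show that (a)~$K\le 1$; (b)~if $K=1$ then the surviving disk lies in $\widetilde\mM_0(\DD^2,N)$; and (c)~if $K=0$ then the limit is a constant disk valued at a point of the binding component~$B_0$.

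For (a), the key ingredient is a winding count along the open book fibration $\vartheta\colon N\setminus B\to\SSS^1$. By Corollary~\ref{cor: boundary of curves transverse to Legendrian foliation}, the boundary of any element of $\widetilde\mM_0(\DD^2,N)$ is positively transverse to the Legendrian foliation~$\fF$, so $\vartheta\circ u|_{\p\DD^2}\colon\SSS^1\to\SSS^1$ is an orientation-preserving immersion of positive integer degree; as this degree is a homotopy invariant and equals $1$ on a Bishop disk, it equals $1$ on every $u_k$. Each non-constant disk bubble $u_\infty^{(j)}$ inherits the same transversality and therefore contributes at least $1$ to the total winding. Conservation of the total winding then forces $K\le 1$, and if $K=1$ the surviving bubble carries the full winding while the remaining boundary must shrink into $\sing(\fF)=B\cup\p N$. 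Property~(b) follows from $C^\infty_{\mathrm{loc}}$-convergence of the parametrizations away from the finitely many bubbling points, producing a path in the solution space joining $u_\infty^{(1)}$ to $u_{k_l}$ for large~$l$.

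For (c), if $K=0$ then every boundary $\p u_{k_l}$ collapses into $\sing(\fF)$. The codimension-$1$ stratum $\p N$ of a \BLOB is excluded by Proposition~\ref{lemma: complexStructureBoundaryBLOB}, which forbids any holomorphic disk from even touching the corresponding model neighborhood; hence the collapse must occur in the binding. Since the sequence lies in the component~$\widetilde\mM_0$ built from the Bishop family around~$B_0$, Proposition~\ref{prop: no hol curve may enter model neighborhood from outside} together with the explicit identification of the moduli space inside the binding neighborhood via $\ev_{z_0}$ forces the entire Gromov limit to be a single point $b_0\in B_0$, which is precisely the second alternative.

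The main obstacle is making the winding conservation in step~(a) rigorous at the loci where $\vartheta$ is not defined. I would handle this by fixing small tubular neighborhoods of~$B$ and of~$\p N$ in which every nearby holomorphic disk is dictated by the local models of Section~\ref{sec: local_model}, so that any boundary arc entering these neighborhoods either yields a controlled contribution to the winding (near~$B$) or is strictly forbidden (near~$\p N$); on the complement $\vartheta$ is an honest $\SSS^1$-fibration and the winding passes continuously to the limit. A secondary, milder technicality is the assertion in~(b) that $u_\infty^{(1)}$ actually lies in~$\widetilde\mM_0$ rather than in some adjacent component, which is again handled by the same $C^\infty_{\mathrm{loc}}$-convergence away from the bubbling locus.
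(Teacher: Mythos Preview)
Your proposal is correct and uses the same three ingredients as the paper: the energy bound, Gromov compactness, and the winding argument via positive transversality to the pages (Corollary~\ref{cor: boundary of curves transverse to Legendrian foliation}) to force at most one disk in the limit. The only substantive difference is the order of operations. You invoke Theorem~\ref{thm: Gromov compactness} first and only afterwards worry about the fact that $N$ fails to be totally real along $\sing(\fF)=B\cup\p N$; the paper does it the other way around. It first strips off small neighborhoods of $B$ and $\p N$, observes via Propositions~\ref{prop: no curves close to codim1 singularity} and~\ref{prop: no hol curve may enter model neighborhood from outside} that any disk entering these neighborhoods is either forbidden (near $\p N$) or is already an explicit Bishop disk (near $B$), and \emph{then} applies Gromov compactness to the remaining sequence, which now has boundary in an honestly compact totally real submanifold. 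This cleanly separates the ``collapse to the binding'' alternative from the bubbling alternative and makes the hypothesis of Theorem~\ref{thm: Gromov compactness} hold on the nose, whereas in your ordering the theorem is being applied before its hypotheses are verified. Your last paragraph essentially contains the fix; just move it to the front. One small overreach: in case~(c) you claim the limit point lies in $B_0$ specifically, but neither the statement nor your argument actually pins down the component---``lying in the binding of~$N$'' is all that is asserted and all that follows.
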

\begin{proof}
  We will apply Theorem~\ref{thm: Gromov compactness}.
  The submanifold~$N$ is not totally real along the binding~$B$ and
  $\p N$, but we simply remove a small open neighborhood of both sets.
  By Proposition~\ref{prop: no curves close to codim1 singularity},
  none of the holomorphic disks~$u_k$ may get close to $\p N$, and by
  Proposition~\ref{prop: no hol curve may enter model neighborhood
    from outside} we know precisely how the curves look like that
  intersect a neighborhood of $B$.
  If we find disks in $(u_k)_k$ that get arbitrarily close to the
  binding of $N$, then using that $B$ is compact, we may choose a
  subsequence that converges to a single point in the binding.
  If $(u_k)_k$ stays at finite distance from $B$, we may assume that
  the neighborhood, we have removed from $N$ is so small that the
  holomorphic disks we are studying all lie inside.
  If the sequence $(u_k)_k$ does not contain any subsequence that can
  be reparametrized in such a way that it converges to a single
  non-constant disk $u_\infty$, we use Gromov compactness to obtain a
  subsequence that splits into a finite collection of holomorphic
  spheres and disks.
  But as a consequence from Corollary~\ref{cor: boundary of curves
    transverse to Legendrian foliation}, we see that non-constant
  holomorphic disks attached to $N$ need to intersect the pages of the
  open book transversely in positive direction.
  A sequence of holomorphic disks that intersects every page of the
  open book exactly once, cannot split into several disks intersecting
  pages several times.
  In particular possible bubble trees contain by this argument a
  single disk in its limit.
\end{proof}

Above, we have obtained compactness for a sequence of disks, but we
would like to understand how these limits can be incorporated into the
moduli space.
Adding the bubble trees to the space of parametrized maps does not
give rise to a valid topology, because the bubbling phenomenon can
only be understood by using different reparametrizations of the disk
to recover all components of the bubble tree.
We will denote the compactification of $\mM_0 \bigl(\DD^2, N;
z_0\bigr)$ by $\overline{\mM}_0 \bigl(\DD^2, N; z_0\bigr)$.
For us, it is not necessary to understand the topology of
$\overline{\mM}_0 \bigl(\DD^2, N; z_0\bigr)$ in detail, but it will be
sufficient to see that bubbling is a ``codimension~$2$ phenomenon''.
In fact, it is not the topology of the moduli space itself we are
interested in, but our aim is to obtain information about the
symplectic manifold.
For this we want to make sure that the image under the evaluation map
of all bubble trees that appear in the limit, that means, of
$\overline{\mM}_0 \bigl(\DD^2, N; z_0\bigr) \setminus \mM_0
\bigl(\DD^2, N; z_0\bigr)$ is contained in the image of a smooth map
defined on a finite union of manifolds each of dimension at most
\begin{equation*}
  \dim \mM_0 \bigl(\DD^2, N; z_0\bigr)  - 2 \; .
\end{equation*}
For this to be true, we need to impose additional conditions for $(W,
\omega)$.

\begin{definition}
  A $2n$-dimensional symplectic manifold $(M,\omega)$ is called
  \begin{itemize}
  \item \defin{symplectically aspherical}, \index{symplectic
      structure!aspherical} if $\omega([A])$ vanishes for every
    $A\in\pi_2(M)$.
  \item It is called \defin{semipositive} \index{symplectic
      structure!semipositive} if every $A\in\pi_2(M)$ with
    $\omega([A]) > 0$ and $c_1(A)\ge 3 - n$ has non-negative Chern
    number.
  \end{itemize}

  Note that every symplectic $4$- or $6$-manifold is obviously
  semipositive.
\end{definition}

In a symplectically aspherical manifold no $J$-holomorphic spheres
exist, because their energy would be zero.
So in particular they may not appear in any bubble tree and
Corollary~\ref{coro: bubbling for BLOBs} implies in our situation that
every sequence of holomorphic disks contains a subsequence that either
collapses into the binding or that converges to a single disk in
$\mM_0 \bigl(\DD^2, N; z_0\bigr)$.
Using the results of Section~\ref{sec: moduli space marked point}, we
obtain the following corollary.

\begin{corollary}\label{cor: smooth compactification aspherical case}
  Let $(W,\omega)$ be a compact symplectically aspherical manifold
  that is a weak filling of a contact manifold~$(M, \xi)$.
  Let $N$ be a \LOB or a \BLOB in $M$, and assume that we find a
  contact form for $\xi$ such that $\restricted{\omega}{TN} =
  \restricted{d\alpha}{TN}$.
  Choose an almost complex structure~$J$ that is adapted to $N$ (as in
  Corollary~\ref{cor: existence adapted almost complex}).
  Then the compactification of the moduli space $\mM_0 \bigl(\DD^2, N;
  z_0\bigr)$ is a smooth compact manifold
  \begin{equation*}
    \overline{\mM}_0 \bigl(\DD^2, N;
    z_0\bigr) =  \mM_0 \bigl(\DD^2, N;
    z_0\bigr) \cup \bigl(\text{binding of $N$}\bigr)
  \end{equation*}
  with boundary.
  The binding of $N$ is a submanifold of codimension~$2$ in the
  boundary $\p \overline{\mM}_0 \bigl(\DD^2, N; z_0\bigr)$.
\end{corollary}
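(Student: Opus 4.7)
The plan is to combine the Gromov compactness result of Corollary~\ref{coro: bubbling for BLOBs} with the symplectic asphericity hypothesis to eliminate every possible degeneration except the collapse onto the binding, and then to glue in the binding using the explicit local model of Section~\ref{sec: local_model}.

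First I would observe that the hypothesis $\omega([A])=0$ for all $A\in\pi_2(W)$ rules out non-constant $J$-holomorphic spheres altogether: any such sphere $v\colon\CP^1\to W$ has strictly positive $\omega$-energy $\int_{\CP^1}v^*\omega >0$ because $J$ is tamed by $\omega$, yet this integral equals $\omega([v])=0$. This immediately eliminates the third alternative in Corollary~\ref{coro: bubbling for BLOBs}, and also guarantees that no sphere bubble can be attached to a limit disk. Proposition~\ref{lemma: complexStructureBoundaryBLOB} already prevents the sequence from approaching $\p N$ in the \BLOB{} case, so the only possible limits of a sequence $(u_k)\subset\widetilde\mM_0(\DD^2,N)$ are either a smooth disk in $\widetilde\mM_0(\DD^2,N)$ or a constant map into the binding $B$ of $N$. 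Passing to the quotient by Möbius reparametrization and keeping the marked point, this shows that the only points one must add to $\mM_0(\DD^2,N;z_0)$ to obtain sequential compactness are those for which $\ev_{z_0}$ tends to a point of $B$.

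Next I would upgrade this set-theoretic description to a smooth structure near $B$, using the identification worked out at the end of Section~\ref{sec: moduli space marked point}. Corollary~\ref{cor: hol curves in neighborhood elliptic sing high dim} together with Proposition~\ref{prop: no hol curve may enter model neighborhood from outside} says that every disk in $\widetilde\mM_0(\DD^2,N)$ whose image meets the model neighborhood $U$ of a binding component $B_0$ is, up to Möbius reparametrization, one of the explicit Bishop disks $u_{s,\bfQ}$, and is contained in $U\cap(\CC\times\RR\times B_0)$. Consequently the evaluation map $\ev_{z_0}$ restricts to a diffeomorphism between $V=\ev_{z_0}^{-1}(U)\subset\mM_0(\DD^2,N;z_0)$ and its image $U\cap(\CC\times(1-\delta,1)\times B_0)$, whose closure in $U$ is the smooth manifold with boundary $U\cap(\CC\times\RR\times B_0)$, containing the codimension-$2$ stratum $\{0\}\times\{1\}\times B_0$. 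This gives a canonical chart on $\overline{\mM}_0(\DD^2,N;z_0)$ in which $B$ is a smooth codimension-$2$ submanifold of the boundary, and in which the topology induced by Gromov convergence coincides with the manifold topology (since shrinking Bishop disks converge in the Gromov sense precisely to their common point in $B$).

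Finally I would assemble the global statement. Away from $B$ the space $\mM_0(\DD^2,N;z_0)$ is already a smooth manifold with boundary of dimension $n+1$ by the transversality and free $\mathrm{PSL}(2,\RR)$-action discussion of Section~\ref{sec: moduli space marked point}; near $B$ the local model above provides a compatible smooth chart. Compactness is then immediate: any sequence in $\overline{\mM}_0(\DD^2,N;z_0)$ has, by the preceding paragraphs, a subsequence converging either to a point of $\mM_0(\DD^2,N;z_0)$ or to a point of $B$, both of which are inside $\overline{\mM}_0(\DD^2,N;z_0)$. The main technical issue to watch is the compatibility of the two smooth structures at points of $B$, i.e.\ checking that $\ev_{z_0}$ extends smoothly across the added stratum rather than merely continuously; this is precisely what Corollary~\ref{cor: hol curves in neighborhood elliptic sing high dim} delivers, because it identifies the moduli space near $B$ with an open piece of the ambient manifold via $\ev_{z_0}$, and so no separate smoothing argument is needed.
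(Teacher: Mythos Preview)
Your proposal is correct and follows essentially the same route as the paper: eliminate sphere bubbles via asphericity (so Corollary~\ref{coro: bubbling for BLOBs} leaves only smooth disks or collapse to the binding), then invoke the explicit gluing of the binding into the moduli space carried out in Section~\ref{sec: moduli space marked point}. The paper states this corollary as an immediate consequence of the preceding discussion without a separate proof environment; you have simply written out the details that the paper leaves implicit.
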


The condition of asphericity is very strong, and we will obtain more
general results by studying instead semipositive manifolds.
The important point here is that a generic almost complex structure
only ensure transversality for somewhere injective holomorphic curves,
see Section~\ref{sec: transversality of CR}.
Even though the holomorphic disks in $\mM_0 \bigl(\DD^2, N; z_0\bigr)$
are simple, it could happen that once the disks bubble, there appear
spheres that are multiple covers.
For these, we cannot guarantee transversality, and hence we cannot
directly predict if the compactification of $\mM_0 \bigl(\DD^2, N;
z_0\bigr)$ consists of adding ``codimension~$2$ strata'' or if we will
be forced to include too many bubble trees
Still, we know that every sphere that is not simple is the multiple
cover of a simple one (by the Riemann-Hurwitz formula a sphere can
only multiply cover a sphere), we can hence compute the dimension of
the moduli space of the underlying simple spheres, and use this
information as an upper bound for the dimension of the spheres that
appear in the bubble tree.
Let $v\colon \SSS^2 \to W$ be a holomorphic sphere that is a $k$-fold
cover of a sphere~$\widetilde v$ representing a homology class $[v]$
and $[\widetilde v] \in H_2(W, \ZZ)$ respectively with $[v] = k
[\widetilde v]$ and with $\omega\bigl([\widetilde v]\bigr) > 0$.
The expected dimension of the space of maps containing $v$ is by an
index formula
\begin{equation*}
  \ind_v \bar \partial_J = 2n + 2\,c_1\bigl([v]\bigr)
  = 2n + 2k\,c_1\bigl([\widetilde v]\bigr) \;.
\end{equation*}
The space of biholomorphisms of $\SSS^2$ has dimension~$6$, and hence
the expected dimension of the moduli space of unparametrized spheres
that contain $[v]$ is $\ind_v \bar \partial_J - 6 = 2\, (n-3) +
2k\,c_1([\widetilde v])$.
As we explained above and in Section~\ref{sec: transversality of CR},
this expected dimension does not correspond in general to the observed
dimension of the bubble trees, instead we study the expected dimension
of the underlying simple spheres.
The dimension of the space containing $\widetilde v$ is given by
$\ind_{\widetilde v} \bar \partial_J - 6 = 2\, (n-3) +
2\,c_1\bigl([\widetilde v]\bigr)$.
If $c_1\bigl([\widetilde v]\bigr) < n-3$, then the expected dimension
will be negative, and since we obtain regularity of all simple
holomorphic curves by choosing a generic almost complex structure, it
follows that the moduli space containing $\widetilde v$ is generically
empty.
As a consequence bubble trees appearing as limits do not contain any
component that is the $k$-fold cover of a simple sphere representing
the homology class~$[\widetilde v]$.
If $c_1\bigl([\widetilde v]\bigr) \ge n-3$, the definition of
semipositivity implies that $c_1\bigl([\widetilde v]\bigr) \ge 0$.
When we compare the expected dimension of the moduli space containing
$v$ with the one of the underlying disk~$\widetilde v$, we observe
that $\ind_v \bar \partial_J - 6 = 2\, (n-3) +
2k\,c_1\bigl([\widetilde v]\bigr) \ge 2\, (n-3) +
2\,c_1\bigl([\widetilde v]\bigr) = \ind_{\widetilde v} \bar \partial_J
- 6$.
Consider now the image in $W$ of all spheres in the moduli space of
$v$ that are $k$-fold multiple covers of some simple sphere.
Their image is contained in the image of the simple spheres lying in
the same moduli space as $\widetilde v$.
The dimension of this second moduli space is smaller or equal than the
expected dimension of the initial moduli space containing $v$, and
even though we cannot ensure regularity for $v$, we have an estimate
on the dimension of the subset containing all singular spheres.
The following result allows us to find the desired bound for the
dimension of the image of complete bubble trees.

\begin{proposition}\label{propo: dimension bubble trees}
  Assume that $(W, \omega)$ is semipositive.
  To compactify the moduli space $\mM_0(W, N, z_0)$, one has to add
  bubbled curves.
  We find a finite set of manifolds $X_1, \dotsc, X_N$ with $\dim X_j
  \le \dim \mM_0(W, N, z_0) - 2$ and smooth maps $f_j\colon X_j \to W$
  such that the image of the bubbled curves under the evaluation
  map~$\ev_{z_0}$ is contained in
  \begin{equation*}
    \cup f_j(X_j) \;.
  \end{equation*}
  When we consider instead the compactification of the boundary $\p
  \mM_0(W, N, z_0)$, that means the space of holomorphic disks with a
  marked point on the boundary of the disk only, then we obtain the
  analogue result, only that the manifolds $X_1, \dotsc, X_N$ have
  dimension $\dim X_j \le \dim \p \mM_0(W, N, z_0) - 2 = \dim \mM_0(W,
  N, z_0) - 3$.
\end{proposition}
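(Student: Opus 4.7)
The plan is to enumerate the possible bubble tree combinatorial types appearing in the Gromov compactification of $\mM_0(\DD^2, N; z_0)$ and, for each such type, to represent the corresponding stratum as the image under $\ev_{z_0}$ of a fiber product of simpler moduli spaces, whose dimension is then bounded by combining the index formula with the semipositivity hypothesis.

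First I would apply Corollary~\ref{coro: bubbling for BLOBs}: the points of $\overline\mM_0(\DD^2, N; z_0)$ that are neither interior points of $\mM_0(\DD^2, N; z_0)$ nor lie in the binding stratum must correspond to bubble trees consisting of exactly one disk component $u_\infty$ and a non-empty finite family of non-constant holomorphic spheres $v_1,\dotsc,v_k$. Because every non-constant disk or sphere has $\omega$-energy bounded below by a universal $\hbar>0$, and because Proposition~\ref{prop: upper energy bound} uniformly bounds the total energy, only finitely many combinatorial types $T$ (recording the attachment data, the node positions, and on which component the marked point $z_0$ lies) can occur.

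For each type $T$ I would first replace every non-simple sphere $v_i = k_i \widetilde v_i$ by its underlying simple sphere $\widetilde v_i$, which is possible since a non-constant holomorphic sphere can only cover another sphere. The image of $v_i$ equals the image of $\widetilde v_i$, so the image of the stratum of type $T$ under $\ev_{z_0}$ is contained in the image of the evaluation map defined on
\begin{equation*}
X_T = \mM\bigl(\DD^2, N; [u_\infty]\bigr) \times_W \prod_i \mM\bigl(\SSS^2; [\widetilde v_i]\bigr),
\end{equation*}
where each factor carries enough additional marked points to record $z_0$ and all incidence conditions, and where the fiber product is taken over the evaluation maps at the nodes. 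By Corollary~\ref{cor: existence adapted almost complex} we may choose $J$ so that every simple curve is regular, and hence each factor of $X_T$ is either empty or a smooth manifold of its expected index dimension. A direct but somewhat lengthy index count, based on $\mu([u_\infty]) = \mu([u]) - 2\sum_i k_i c_1([\widetilde v_i])$ together with the usual dimension formulas for disk and sphere moduli (including the $-3$ and $-6$ contributions from the automorphism quotients) and the $-2n$ cost per node, yields
\begin{equation*}
\dim X_T = \dim \mM_0(\DD^2, N; z_0) - 2k - 2\sum_i (k_i - 1)\, c_1\bigl([\widetilde v_i]\bigr).
\end{equation*}
Semipositivity then forces $c_1([\widetilde v_i]) \ge 0$ for every sphere that actually appears: the only other alternative is $c_1([\widetilde v_i]) < 3-n$, in which case the moduli space of simple spheres in that class has negative expected dimension and is generically empty. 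Since $k \ge 1$, it follows that $\dim X_T \le \dim \mM_0(\DD^2, N; z_0) - 2$, as required. The boundary variant is identical except that $z_0$ is placed on $\p\DD^2$ (where it must remain, since spheres only bubble off interior points), which uniformly decreases every relevant disk-moduli dimension by one and yields the analogous bound $\dim X_T \le \dim \p\mM_0(\DD^2, N; z_0) - 2$.

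The main obstacle, apart from carrying out the tedious index bookkeeping for general trees (including spheres attached to spheres rather than directly to the disk), will be the case in which $u_\infty$ is itself a constant disk landing in $N$: here all sphere bubbles meet at a single point of $N$, the parametrization quotient on the disk component becomes delicate, and one must check stability of the resulting configuration and use the constraint $\sum_i k_i c_1([\widetilde v_i]) = 1$ arising from $\mu([u])=2$ in order to verify that the ``pass through a fixed point'' incidence costs still produce the desired codimension-two bound.
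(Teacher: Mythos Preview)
Your approach is essentially the same as the paper's: enumerate bubble-tree types, replace multiply covered spheres by underlying simple ones, compute the index of the resulting configuration space, and use semipositivity to bound $c_1$ from below. The index bookkeeping you outline matches the paper's computation, which arrives at $\dim X_T = (n+1) - 2k + 2\bigl(c_1(\mathbf{B}) - c_1(\mathbf{A})\bigr) \le (n+1) - 2k$.

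One technical point where the paper is more careful than your sketch: it is not enough to replace each $v_i$ individually by its simple cover $\widetilde v_i$. Two distinct components of the original tree may cover the \emph{same} simple sphere, and in that case your fiber product $X_T$ contains two factors parametrising the same moduli space; the evaluation map from such a product is not generically transverse to the diagonal, so you cannot conclude that $X_T$ has its expected dimension. The paper therefore passes to a \emph{simple tree} in which every sphere is simple \emph{and} any two spheres have different image; this may decrease the number~$k$ of sphere components, but one still has $k\ge 1$ and the bound $-2k \le -2$ survives. You should incorporate this reduction.

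Your closing worry about a constant disk component is not treated separately in the paper. In the present setup it does not arise: the boundary of each $u_k$ is a loop positively transverse to the Legendrian foliation and winding once around the open book, and such loops cannot Gromov-converge to a single point of $N$ away from the binding (near a regular point of $\fF$ no small loop is everywhere transverse to the leaves). Convergence to a point of the binding is already accounted for as a separate stratum in Corollary~\ref{coro: bubbling for BLOBs}, so in the genuine bubble-tree case the disk $u_\infty$ is non-constant and hence simple.
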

\begin{proof}
  The standard way to treat bubbled curves consists in considering
  them as elements in a bubble tree: Here such a tree is composed by a
  simple holomorphic disk $u_0\colon\, (\DD^2, \SSS^1) \to (W, N)$ and
  holomorphic spheres $u_1,\dotsc,u_{k^\prime}\colon \,\SSS^2\to W$.
  These holomorphic curves are connected to each other in a certain
  way.  We formalize this relation by saying that the holomorphic
  curves are vertices in a tree, i.e.\ in a connected graph without
  cycles.
  We denote the edges of this graph by $\{u_i,u_j\}$, $0\le i<j\le
  k^\prime$.
  Now we assign to any edge two nodal points $z_{ij}$ and $z_{ji}$,
  the first one in the domain of the bubble $u_i$, the other one in
  the domain of $u_j$, and we require that $\ev_{z_{ij}}(u_i) =
  \ev_{z_{ji}}(u_j)$.
  For technical reasons, we also require nodal points on each
  holomorphic curve to be pairwise distinct.
  To include into the theory, trees with more than one bubble
  connected at the same point to a holomorphic curve, we add ``ghost
  bubbles''.
  These are constant holomorphic spheres inserted at the point where
  several bubbles are joined to a single curve.
  Now all the links at that point are opened and reattached at the
  ghost bubble.
  Ghost bubbles are the only constant holomorphic spheres we allow in
  a bubble tree.
  The aim is to give a manifold structure to these bubble trees, but
  unfortunately this is not always possible, when multiply covered
  spheres appear in the bubble tree.
  Instead, we note that the image of every bubble tree is equal to the
  image of a simple bubble tree, that means, to a tree, where every
  holomorphic sphere is simple and any two spheres have different
  image.
  Since we are only interested in the image of the evaluation map on
  the bubble trees, it is for our purposes equivalent to consider the
  simple bubble tree instead of the original one.
  The disk~$u_0$ is always simple, and does not need to be replaced by
  another simple curve.
  Let $u_0,u_1,\dotsc,u_{k^\prime}$ be the holomorphic curves
  composing the original bubble tree, and let $A_i\in H_2(W)$ be the
  homology class represented by the holomorphic sphere~$u_i$.
  The simple tree is composed by $u_0,v_1,\dotsc,v_{k}$ such that for
  every $u_j$ there is a bubble sphere~$v_{i_j}$ with equal image
  \begin{align*}
    u_j(\SSS^2) &= v_{i_j}(\SSS^2)
  \end{align*}
  and in particular $A_j = m_jB_{i_j}$, where $B_{i_j} = [v_{i_j}]\in
  H_2(W)$ and $m_j\ge 1$ is an integer.
  Write also $\mathbf{A}$ for the sum $\sum_{j=1}^{k^\prime}A_j$ and
  $\mathbf{B}$ for the sum $\sum_{i=1}^{k}B_i$.
  Below we will compute the dimension of this simple bubble tree.
  The initial bubble tree $u_0, u_1,\dotsc, u_{k^\prime}$ is the limit
  of a sequence in the moduli space $\mathcal{M}_0(W, N, z_0)$.
  Hence the connected sum $u_\infty := u_0\conSum \dotsm \conSum
  u_{k^\prime}$ is, as element of $\pi_2(W, N)$, homotopic to a disk
  $u$ in the bishop family, and the Maslov indeces
  \begin{equation*}
    \mu(u) :=
    \mu(u^*TW,u^*T N)
    \,\text{ and }\,
    \mu(u_\infty) :=
    \mu(u_\infty^*TW,u_\infty^*TN)
  \end{equation*}
  have to be equal.
  With the standard rules for the Maslov index (see for example
  \cite[Appendix~C.3]{McDuffSalamonJHolo}), we obtain
  \begin{align*}
    2 = \mu(u) = \mu(u_\infty) = \mu(u_0) + \sum_{j=1}^{k^\prime}
    2c_1([u_j]) = \mu(u_0) + 2c_1(\mathbf{A})\;.
  \end{align*}
  The dimension of the unconnected set of holomorphic curves
  $\widetilde \mM_{[u_0]}(W, N, z_0) \times \prod_{j=1}^k \widetilde
  \mM_{B_j}(W)$ for the simple bubble tree is
  \begin{align*}
    \bigl(n + \mu(u_0)\bigr) + \sum_{j=1}^k 2\,\bigl(n+c_1(B_j)\bigr)
    & = n + 2 -
    2c_1(A) + 2nk + \sum_{j=1}^k 2c_1(B_j) \\
    & = n + 2 + 2nk + 2\,
    \bigl(c_1(\mathbf{B})-c_1(\mathbf{A})\bigr)\;.
  \end{align*}
  In the next step, we want to consider the subset of connected
  bubbles, i.e.\ we choose a total of $k$ pairs of nodal points, which
  then have to be pairwise equal under the evaluation map.
  The nodal points span a manifold
  \begin{align*}
    Z(2k) &\subset \bigl\{(1,\dotsc,2k) \to \DD^2 \amalg \SSS^2 \amalg
    \dotsm \amalg \SSS^2\bigr\}
  \end{align*}
  of dimension~$4k$.
  The dimension reduction comes from requiring that the evaluation map
  \begin{align*}
    \ev\colon\, \widetilde \mM_{[u_0]}(W, N, z_0) \times \prod_{j=1}^k
    \widetilde \mM_{B_j}(W) \times Z(2k) \to W^{2k}
  \end{align*}
  sends pairs of nodal points to the same image in the symplectic
  manifold.
  By regularity and transversality of the evaluation map to the
  diagonal submanifold $\triangle(k)\hookrightarrow W^{2k}$, the
  dimension of the space of holomorphic curves is reduced by the
  codimension of $\triangle(k)$, which is $2nk$.
  As a last step, we have to add the marked point~$z_0$ used for the
  evaluation map~$\ev_{z_0}$, this way increasing the dimension by
  $2$, and then we take the quotient by the automorphism group to
  obtain the moduli space.
  The dimension of the automorphism group is $6k + 3$.
  Hence the dimension of the total moduli space is
  \begin{multline*}
    n + 2 + 2nk + 2\, \bigl(c_1(\mathbf{B})-c_1(\mathbf{A})\bigr) +
    4k - 2nk + 2 - (6k +3) \\
    = n +1 - 2k + 2\, \bigl(c_1(\mathbf{B})-c_1(\mathbf{A})\bigr) \le
    n + 1 - 2k \;.
  \end{multline*}
  The inequality holds because by the assumption of semipositivity,
  all the Chern classes are non-negative on holomorphic spheres, and
  all coefficients~$n_j$ in the difference $c_1(\mathbf{B}) -
  c_1(\mathbf{A}) = \sum_j c_1(B_j) - \sum_i c_1(A_i) = \sum_j
  c_1(B_j) - \sum_i m_i c_1(B_{j_i}) = \sum_j n_j c_1(B_j)$ are
  non-positive integers.
  The computations for the disks in $\p \mM_0\bigl(\DD^2, N;
  z_0\bigr)$ only differs by the requirement that the marked point
  needs to lie on the boundary of the disk~$u_0$ instead of moving
  freely on the bubble tree.
  Instead of having two degrees of freedom for this choice, we thus
  only add one extra dimension.
\end{proof}

\section{Proof of the non-fillability Theorem~\ref{thm:
    non-fillability}}

\begin{non_fillability_theorem}
  Let $(M, \xi)$ be a contact manifold that contains a \BLOB~$N$, then
  $M$ does not admit any semi-positive weak symplectic filling $(W,
  \omega)$ for which $\restricted{\omega}{TN}$ is exact.
\end{non_fillability_theorem}

Assume there were a semi-positive symplectic filling $(W, \omega)$ for
which $\restricted{\omega}{TN}$ is exact.
Let $\alpha$ be a positive contact form for $\xi$.
By Proposition~\ref{prop: cohomolog collar}, we can extend $(W,
\omega)$ with a collar in such a way that we have
$\restricted{\omega}{TN} = \restricted{d\alpha}{TN}$, which will allow
us to use the energy estimates of the previous section.
Now we choose an almost complex structure that is adapted to the
\BLOB~$N$ as in Corollary~\ref{cor: existence adapted almost complex},
and we will study the moduli space $\mM_0 \bigl(\DD^2, N; z_0\bigr)$
defined in Section~\ref{sec: moduli space marked point} of holomorphic
disks with one marked point lying in the same component as the Bishop
family around a chosen component~$B_0$ of the binding of $N$.
Trace a smooth path $\gamma\colon [0,1] \to N$ that starts at
$\gamma(0) \in B_0$ and ends on the boundary~$\p N$.
Assume further that $\gamma$ is a regular curve, and that it
intersects the binding and $\p N$ only on the endpoints of $[0,1]$.
We want to select a $1$-dimensional moduli space in $\mM_0
\bigl(\DD^2, N; z_0\bigr)$ by only considering
\begin{equation*}
  \mM^\gamma := \ev_{z_0}^{-1}\bigl(\gamma(I)\bigr) \;.
\end{equation*}
It will be important for us that $\gamma(I)$ does not intersect the
image of any bubble trees in $\overline{\mM}_0 \bigl(\DD^2, N;
z_0\bigr) \setminus \mM_0 \bigl(\DD^2, N; z_0\bigr)$.
By Proposition~\ref{propo: dimension bubble trees}, we have that the
bubble trees in $\overline{\p \mM}_0 \bigl(\DD^2, N; z_0\bigr)$ lie in
the image of a finite union of smooth maps defined on manifolds of
dimension $\dim \p \mM_0 \bigl(\DD^2, N; z_0\bigr) - 2 = \dim N - 2$.
The subset $N \setminus \ev_{z_0}(\text{bubble trees})$ is connected
and we can deform $\gamma$ keeping the endpoints fixed so that it does
not intersect any of the bubble trees.
For a small perturbation of $J$ (away from the binding and the
boundary of $N$), we can make sure that the evaluation map $\ev_{z_0}$
is transverse to the path~$\gamma(I)$.
If the perturbed~$J$ lies sufficiently close to the old one, then
$\gamma$ will also not intersect any bubble trees for this new $J$,
for otherwise we could choose a sequence of almost complex
structures~$J_k$ converging to the unperturbed $J$ such that for
everyone there existed a bubble tree~$v_k$ intersecting $\gamma$.
We would find a converging subsequence of $v_k$ yielding a bubble
tree~$v_\infty$ for the unperturbed almost complex structure
intersecting $\gamma$, which contradicts our assumption.
It follows that $\mM^\gamma$ is a collection of compact
$1$-dimensional submanifolds of $\p \mM_0 \bigl(\DD^2, N; z_0\bigr)$.
There is one component in $\mM^\gamma$, which we will denote by
$\mM_0^\gamma$ that contains the Bishop disks that intersect
$\gamma\bigl([0,\epsilon)\bigr)$.
We know that the Bishop disks are the only disks close to the binding,
and hence it follows that $\mM_0^\gamma$ cannot be a loop that closes,
but must be instead a closed interval.
The first endpoint of $\mM_0^\gamma$ is the constant disk with image
$\gamma(0) \in B_0$, and we will deduce a contradiction by showing
that no holomorphic disk can be the second endpoint of $\mM_0^\gamma$.
By Proposition~\ref{prop: no curves close to codim1 singularity},
there is a small neighborhood of $\p N$ that cannot be entered by any
holomorphic disk.
By our construction the endpoint of $\mM_0^\gamma$ cannot be any
bubble tree either.
It follows that the endpoint needs to be a regular disk $[u, z_0] \in
\p \mM_0 \bigl(\DD^2, N; z_0\bigr)$ for which the boundary of $u$ lies
in $N \setminus \bigl(\p N \cup B\bigr)$ and whose interior points
cannot touch $\p W$ either, because we are assuming that the boundary
of $W$ is convex.
It follows that this regular disk cannot really be the endpoint of
$\mM_0^\gamma$, because the evaluation map~$\ev_{z_0}$ will also be
transverse to $\gamma$ at $[u, z_0]$ so that we can extend
$\mM_0^\gamma$ further.
This leads to a contradiction that shows that the assumption that the
boundary of $W$ is everywhere convex cannot hold.

\section{Proof of Theorem~\ref{thm: homology LOB}}

For the proof, we first recall the definition of the degree of a map.

\begin{definition}
  Let $X$ and $Y$ be closed oriented $n$-manifolds.
  The \defin{degree} of a map $f\colon X \to Y$ is the integer $d =
  \deg(f)$ such that
  \begin{equation*}
    f_\# [X] = d\cdot [Y] \;,
  \end{equation*}
  where $[X] \in H_n(X,\ZZ)$ and $[Y] \in H_n(Y,\ZZ)$ are the
  fundamental classes of the corresponding manifolds.
  When the manifolds $X$ and $Y$ are not orientable, we define the
  degree to be an element of $\ZZ_2$ using the same formula, where the
  fundamental classes are elements in $H_n(X,\ZZ_2)$ and
  $H_n(Y,\ZZ_2)$.  \index{degree of a map}
\end{definition}
Note that we can easily compute the degree of a smooth map~$f$ between
smooth manifolds by considering a regular value~$y_0\in Y$ of $f$
(which by Sard's theorem exist in abundance), and adding
\begin{equation*}
  \deg f = \sum_{x\in f^{-1}(y_0)}  \sign Df_x\;,
\end{equation*}
where the point~$x$ contributes to the sum with $+1$, whenever $Df_x$
is orientation preserving, and contributes with $-1$ otherwise.
In case the manifolds are not orientable, we can always add $+1$ in
the above formula, but need to take sum over $\ZZ_2$.

\begin{homology_LOB_theorem}
  Let $(M, \xi)$ be a contact manifold of dimension $(2n+1)$ that
  contains a \LOB~$N$.
  If $M$ has a weak symplectic filling $(W,\omega)$ that is
  symplectically aspherical, and for which $\restricted{\omega}{TN}$
  is exact, then it follows that $N$ represents a trivial class in
  $H_{n+1}(W, \ZZ_2)$.
  If the first and second Stiefel-Whitney classes~$w_1(N)$ and
  $w_2(N)$ vanish, then we obtain that $[N]$ must be a trivial class
  in $H_{n+1}(W, \ZZ)$.
\end{homology_LOB_theorem}

Using Proposition~\ref{prop: cohomolog collar} we can assume that
$\restricted{\omega}{TN} = \restricted{d\alpha}{TN}$ for a chosen
contact form~$\alpha$.
Choose an almost complex structure~$J$ on $W$ that is adapted to the
\LOB~$N$, and let $\mM_0 \bigl(\DD^2, N; z_0\bigr)$ be the moduli
space of holomorphic disks with one marked point lying in the same
component as the Bishop family around a chosen component of the
binding of $N$.
Since $W$ is symplectically aspherical, we obtain by
Corollary~\ref{cor: smooth compactification aspherical case} that
$\overline{\mM}_0 \bigl(\DD^2, N; z_0\bigr)$ is a compact smooth
manifold with boundary.
It was shown in \cite{GeorgievaThesis} that $\mM_0 \bigl(\DD^2, N;
z_0\bigr)$ is orientable if the first and second Stiefel-Whitney
classes of $N\setminus B$ vanish.
With our assumptions this is the case, because $w_j\bigl(N\setminus
B\bigr) = \restricted{w_j(N)}{(N\setminus B)}$.
If $\mM_0 \bigl(\DD^2, N; z_0\bigr)$ is orientable then
$\overline{\mM}_0 \bigl(\DD^2, N; z_0\bigr)$ will also be orientable:
If there were an orientation reversing loop~$\gamma$ in the
compactified moduli space (which is obtained from $\mM_0 \bigl(\DD^2,
N; z_0\bigr)$ by gluing in $B$ as codimension~$3$ submanifold), then
due to the large codimension we could easily push $\gamma$ completely
into the regular part of the moduli space, where it would still need
to be orientation reversing.
It follows that the boundary $\p \overline{\mM}_0 \bigl(\DD^2, N;
z_0\bigr)$ is also homologically a boundary (either with $\ZZ$- or
$\ZZ_2$-coefficients depending on the orientability of the considered
spaces).
Denote the restriction of the evaluation map
\begin{equation*}
  \restricted{\ev_{z_0}}{\p \overline{\mM}_0 \bigl(\DD^2, N;
    z_0\bigr)}\colon \p \overline{\mM}_0 \bigl(\DD^2, N;
  z_0\bigr) \to N \;,
\end{equation*}
by $f$.
We know that close to the binding every point is covered by a unique
Bishop disk, this implies by the remarks made above that the
degree~$\deg(f)$ needs to be $\pm 1$.
We have the following obvious equation
\begin{equation*}
  \ev_{z_0} \circ \,
  \iota_{\p \overline{\mM}}
  = \iota_N \circ f \;,
\end{equation*}
where $\iota_{\p \overline{\mM}}$ denotes the embedding of $\p
\overline{\mM}_0\bigl(\DD^2, N; z_0\bigr)$ in
$\overline{\mM}_0\bigl(\DD^2, N; z_0\bigr)$ and $\iota_N$ the
embedding of $N$ in $W$.
The homomorphism induced by $\iota_{\p \overline{\mM}}$ is the trivial
map on the $(n+1)$-st homology group, so that the left side of the
equation gives rise to the $0$-map
\begin{equation*}
  H_{n+1}\Bigl(\p \overline{\mM}_0\bigl(\DD^2, N; z_0\bigr), R\Bigr)
  \to   H_{n+1}\bigl( W, R\bigr)
\end{equation*}
with $R$ being either $\ZZ$ or $\ZZ_2$.
Since $f_\#$ is $\pm$ identity, it follows that $\iota_N$ has to
induce the trivial map on homology, which implies that $N$ is
homologically trivial in $W$.

\bibliographystyle{amsalpha}

\providecommand{\bysame}{\leavevmode\hbox to3em{\hrulefill}\thinspace}
\providecommand{\MR}{\relax\ifhmode\unskip\space\fi MR }
\providecommand{\MRhref}[2]{%
  \href{http://www.ams.org/mathscinet-getitem?mr=#1}{#2}
}
\providecommand{\href}[2]{#2}

\printindex

\end{document}